\newcommand{\numberseries}{\mdseries}   
\newlength{\thmtopspace}                
\newlength{\thmbotspace}                
\newlength{\thmheadspace}               
\newlength{\thmindent}                  
\renewcommand{\subparagraph}{\vspace{\thmbotspace}}
\newtheoremstyle{bfupright head,slanted body}
{\thmtopspace}{\thmbotspace}
{\slshape}{\thmindent}{\bfseries}{.}{\thmheadspace}
{{\numberseries \thmnumber{\bf #2 }}\thmnote{#3}}
\newtheoremstyle{bfupright head,upright body}
{\thmtopspace}{\thmbotspace}
{\upshape}{\thmindent}{\bfseries}{.}{\thmheadspace}
{{\numberseries \thmnumber{\bf #2 }}\thmnote{#3}}
\newtheoremstyle{bfit head,upright body}
{\thmtopspace}{\thmbotspace}
{\upshape}{\thmindent}{\upshape}{.}{\thmheadspace}
{{\numberseries\thmnumber{\bf #2 }}
  {\bfseries\itshape\thmnote{\negthickspace#3}}}
\newtheoremstyle{it head,upright body}
{\thmtopspace}{\thmbotspace}
{\upshape}{\thmindent}{\upshape}{.}{\thmheadspace}
{{\numberseries\thmnumber{\bf #2 }}
  {\itshape\thmnote{\negthickspace#3}}}
\newtheoremstyle{fixed bf head,slanted body}
{\thmtopspace}{\thmbotspace}{\slshape}
{\thmindent}{\bfseries}{.}{\thmheadspace}
{{\numberseries \thmnumber{\bf  #2 }}\thmname{#1}\thmnote{ (#3)}}
\newtheoremstyle{fixed bf head,upright body}
{\thmtopspace}{\thmbotspace}{\upshape}
{\thmindent}{\bfseries}{.}{\thmheadspace}
{{\numberseries \thmnumber{\bf #2 }}\thmname{#1}\thmnote{ (#3)}}
\newtheoremstyle{fixed bfit head,upright body}
{\thmtopspace}{\thmbotspace}{\upshape}
{\thmindent}{\bfseries\itshape}{.}{\thmheadspace}
{{\numberseries \thmnumber{\bf#2 }}\thmname{#1}\thmnote{ (#3)}}
\newtheoremstyle{sc head,small body}
{\thmtopspace}{\thmbotspace}
{\small\upshape}{\thmindent}{\scshape}{.}{\thmheadspace}
{\thmname{#1}}
\newtheoremstyle{numbered paragraph}
{\thmtopspace}{\thmbotspace}{\upshape}
{\thmindent}{\upshape}{}{0pt}
{{\numberseries \thmnumber{\bf #2 }}}
\newtheoremstyle{unnumbered paragraph}
{\thmtopspace}{\thmbotspace}{\upshape}
{\parindent}{\upshape}{}{0pt}
\theoremstyle{bfupright head,slanted body}
\newtheorem{res}{}[section]             \newtheorem*{res*}{}
\theoremstyle{bfit head,upright body}
                 \newtheorem*{com*}{}
\theoremstyle{bfupright head,upright body}
\newtheorem{bfhpg}[res]{}               \newtheorem*{bfhpg*}{}
\theoremstyle{it head,upright body}
               \newtheorem*{ithpg*}{}
\theoremstyle{sc head,small body}
\theoremstyle{fixed bf head,slanted body}
\newtheorem{thm}[res]{Theorem}          \newtheorem*{thm*}{Theorem}
\newtheorem{prp}[res]{Proposition}      \newtheorem*{prp*}{Proposition}
\newtheorem{cor}[res]{Corollary}        \newtheorem*{cor*}{Corollary}
\newtheorem{lem}[res]{Lemma}            \newtheorem*{lem*}{Lemma}
\theoremstyle{fixed bf head,upright body}
\newtheorem{dfn}[res]{Definition}       \newtheorem*{dfn*}{Definition}
     \newtheorem*{con*}{Construction}
      \newtheorem*{obs*}{Observation}
\newtheorem{rmk}[res]{Remark}           \newtheorem*{rmk*}{Remark}
\newtheorem{exa}[res]{Example}          \newtheorem*{exa*}{Example}
         \newtheorem*{exe*}{Exercise}
            \newtheorem{stp*}{Setup}
            \newtheorem{blk*}{\!\!\!}
\newtheorem{ntn}[res]{Notation}
\theoremstyle{numbered paragraph}
\newtheorem{ipg}[res]{}
\theoremstyle{unnumbered paragraph}
\newtheorem{ipg*}{}
\newlength{\thmlistleft}        
\newlength{\thmlistright}       
\newlength{\thmlistpartopsep}   
\newlength{\thmlisttopsep}      
\newlength{\thmlistparsep}      
\newlength{\thmlistitemsep}     
\newcounter{eqc}
  {\end{list}}%
\newcounter{prt}
  {\end{list}}%
\newcounter{rqm}
  {\end{list}}%
\newcounter{exercise}
  {\end{list}}%
\newenvironment{prf*}[1][Proof]{%
  \begin{proof}[\it #1]
    \setcounter{equation}{0}
    \renewcommand{\theequation}{\arabic{equation}}}
  {\end{proof}
}
\newcommand{\pgref}[1]{(\ref{#1})}
\renewcommand{\eqref}[1]{\pgref{eq:#1}}
\numberwithin{equation}{res}
\newcounter{marcom}
\renewcommand{\Im}[1]{\nobreak{\operatorname{Im}#1}}
\newcommand{\coker}{\nobreak{\operatorname{coker}}}
\newcommand{\Ker}[1]{\nobreak{\operatorname{Ker}#1}}
\newcommand{\Ann}{\nobreak{\operatorname{Ann}}}
\renewcommand{\le}{\leqslant}
\renewcommand{\ge}{\geqslant}
\newcommand{\onto}{\twoheadrightarrow}
\newcommand{\lra}{\longrightarrow}
\newcommand{\xra}[2][]{\xrightarrow[#1]{\;#2\;}}
\newcommand{\lla}{\longleftarrow}
\newcommand{\xla}[2][]{\xleftarrow[#1]{\;#2\;}}
\newcommand{\fm}{\mathfrak{m}}
\newcommand{\fc}{\mathfrak{c}}
\newcommand{\Hom}{\operatorname{Hom}}
\newcommand{\ceil}[1]{\lceil#1\rceil}
\newcommand{\floor}[1]{\lfloor#1\rfloor}
\newcommand{\ann}{\operatorname{ann}}
\newcommand{\im}{\operatorname{im}}
\renewcommand{\Gamma}{\textrm{C}}
\newcommand{\h}{\operatorname{H}}
\newcommand{\rank}{\operatorname{rank}\,}
\newcommand{\soc}{\operatorname{soc}}
\newcommand{\syz}{\operatorname{syz}\,}
\newcommand{\Pf}{\operatorname{Pf}}
\newcommand{\iden}{\operatorname{I}}
\newcommand{\Sym}{\operatorname{Sym}}
\newcommand{\reg}{\operatorname{reg}\,}
\newcommand{\linkcomp}[1]{link-#1-compressed}
\newcommand{\qlinkcomp}{\linkcomp{$q$}}
\def\urltilda{\kern -.15em\lower .7ex\hbox{\~{}}\kern .04em}
\def\widebardisplay#1{%
  \setbox0=\hbox{$\displaystyle #1$}
  \dimen0=\wd0%
  \advance\dimen0 by -3.2pt
  \vbox{%
    \nointerlineskip%
    \moveright 1.2pt 
    \vbox{\hrule width \dimen0}%
    \nointerlineskip%
    \kern 1.25pt
    \box0%
  }%
}
\def\widebartext#1{%
  \setbox0=\hbox{$#1$}
  \dimen0=\wd0%
  \advance\dimen0 by -3.2pt
  \vbox{%
    \nointerlineskip%
    \moveright 1.2pt 
    \vbox{\hrule width \dimen0}%
    \nointerlineskip%
    \kern 1.25pt
    \box0%
  }%
}
\def\widebarscript#1{%
  \setbox0=\hbox{$\scriptstyle #1$}
  \dimen0=\wd0%
  \advance\dimen0 by -2pt
  \vbox{%
    \nointerlineskip%
    \moveright 1pt 
    \vbox{\hrule width \dimen0}%
    \nointerlineskip%
    \kern .8pt
    \box0%
  }%
}
\def\widebarscriptscript#1{%
  \setbox0=\hbox{$\scriptscriptstyle #1$}
  \dimen0=\wd0%
  \advance\dimen0 by -1pt
  \vbox{%
    \nointerlineskip%
    \moveright .5pt 
    \vbox{\hrule width \dimen0}%
    \nointerlineskip%
    \kern .6pt
    \box0%
  }%
}
\begin{document}
  
  \allowdisplaybreaks[2]
  
  \title[Betti numbers]{Betti numbers of the Frobenius powers of the maximal ideal over general hypersurfaces}
  
  \author[C.\, Miller  ]{Claudia Miller}
  \address{C.M.\newline\hspace*{1em} Mathematics Department, Syracuse University, Syracuse, NY 13244, U.S.A.}
  \email{clamille@syr.edu} 
  \urladdr{http://clamille.mysite.syr.edu}

  \author[H. Rahmati]{Hamidreza Rahmati}  
  \address{H.R.\newline\hspace*{1em} Department of Mathematics,  University of Nebraska,
    Lincoln, NE 68588,  USA}  
  \email{hrahmati2@unl.edu}

    \author[R.\, R.G.  ]{Rebecca R.G.}
  \address{R.R.G.\newline\hspace*{1em} Department of Mathematical Sciences, George Mason University, Fairfax, VA, U.S.A.}
  \curraddr{George Mason University}
  \email{rrebhuhn@gmu.edu}

  
  \keywords{Frobenius powers, positive characteristic, free resolutions, Artinian algebras, relatively compressed algebras, inverse systems}
  
  \subjclass[2010]{
  Primary: 13A35, 13D02.}
  
  
\thanks{C.\ Miller was partially supported by the National Science Foundation (DMS-1802207), Syracuse University Small Grant, and Douglas R.\ Anderson Faculty Scholar Fund.}

  \begin{abstract}
    The main goal of this paper is to prove, in positive characteristic $p$, stability behavior for the graded Betti numbers in the periodic tails of the minimal resolutions of Frobenius powers of the homogeneous maximal ideals for very general choices of hypersurface in three variables whose degree has the opposite parity to that of $p$. 
    We also find some of the structure of the matrix factorization giving the resolution. 
    We achieve this by developing a method 
    for obtaining the degrees of the generators  of the defining ideal of an 
    $\fc$-compressed Gorenstein Artinian graded algebra from its socle degree, where $\fc$ is a Frobenius power of the homogeneous maximal ideal. 
    As an application, we also obtain the Hilbert-Kunz function of the hypersurface ring, as well as the Castelnuovo-Mumford regularity of the quotients by Frobenius powers of the homogeneous maximal ideal. 
  \end{abstract}

  \maketitle

  \section*{Introduction}
  \label{sec:intro}

Let $R=k[x_1,\dots,x_n]/(f)$ be a standard graded hypersurface ring over a field $k$ with homogeneous maximal ideal $\fm$ and $f\in\fm^2$. 
In this paper we study the question in positive characteristic $p$ of how the free resolutions of the Frobenius powers
\vspace{-2mm}
\[
\fm^{[p^e]}=(x_1^{p^e},\dots,x_n^{p^e})
\] 
are related to each other, in view of the fundamental role that Frobenius powers play in providing invariants of the ring. 
In three variables, Kustin and Ulrich made a fascinating 
computational observation
that the periodic tails of these resolutions seem to eventually cycle as $e$ increases (and in fact sometimes stabilize). 
In the case that $R$ is a diagonal hypersurface, that is, for $f=x_1^{d}+x_2^{d}+x_3^{d}$, this behavior has successfully been determined by 
Kustin, Rahmati, and Vraciu  in \cite{KRV-12}. In addition, they identify for which $p$ and $e$ the tails are infinite (and they prove similar results for the two variable case). See also recent work of Kustin, R.G., and Vraciu for the four variable diagonal hypersurface case \cite{KRGV-22,KRGV-23}.

In this paper we prove that, for odd primes $p$, stability of the graded Betti numbers holds for {\it very general} hypersurfaces $f$ in 3 variables of even degree $d<p$ and for all $q=p^e$. 
In addition, we show that the  corresponding matrix factorizations can be chosen to have very nice structure, namely so that one of the matrices consists of {\it linear} forms and has Pfaffian equal to $vf$ for some unit $v$. 

More precisely, we prove  
these results hold for $f$ satisfying a condition we call \textit{\qlinkcomp}, 
which turns out to be a  \textit{Zariski-open} condition on the coefficients of $f$ for any particular values of $e$ and $d=\deg f$; see Definitions~\ref{def-qlinkcomp} and \ref{relcompressed} and Remark~\ref{rmk-general}.
That the statement holds for very general $f$, that is, that the countable intersection of these Zariski open sets over all $e$ is {\it nonempty} for even $d$ and odd $p$, follows from the doctoral thesis of Camphire \cite{Camphire-thesis}.  

Our result is the following, whose more precise statement is Theorem~\ref{gradedBetti}(a); see also Remark~\ref{rk:q=d+1} for the $q=d+1$ case. 

\begin{bfhpg*}[{Theorem A}]
{\it 
Let $k$ be a field of characteristic $p>0$. Suppose that $p$ is odd and $d$ is even with $p>d$. 
Let $R=k[x,y,z]/(f)$ be a standard graded hypersurface ring for a very general choice of $f$ of degree $d$. 

Then for all $q=p^e$,
the (eventually 2-periodic) minimal graded free resolution of $R/\fm^{[q]}$ is of the form}
\begin{equation*}
    \cdots \xra{\partial_4} R^{2d} 
    \xra{\partial_3} R^{2d}
    \xra{\partial_4} R^{2d}  
    \xra{\partial_3} R^{2d} 
    \xra{\partial_2} R^3 
    \xra{\partial_1} R \to 0
\end{equation*}
{\it 
whose differentials are maps of pure graded degrees 
\[
\deg(\partial_1)=q
, \ 
\deg(\partial_2)=\frac{1}{2}(q+d-1)
, \ 
\deg(\partial_3)=1
, \ 
\deg(\partial_4)=d-1.
\]
Furthermore, there is a matrix factorization $(\widetilde{\partial_3},\widetilde{\partial_4})$ determining the resolution above for which $\widetilde{\partial_3}$ is a linear skew-symmetric matrix over the ring $k[x,y,z]$ with  
\begin{equation*}
    \operatorname{Pf}(\partial_3) = vf
\end{equation*}
for some unit $v\in k$. 

In particular, for any two values $q_1 > q_0$ of $q$, the high graded Betti numbers of the $R$-modules $R/\fm^{[q_0]}$ and $R/\fm^{[q_1]}$ are the same, up to a constant shift of $\frac{3}{2}(q_1-q_0)$. }
\end{bfhpg*}

We prove Theorem A by first finding the relevant minimal graded resolutions over the polynomial ring $P=k[x_1, \cdots, x_n]$; see Proposition~\ref{I-Presn} and Theorem~\ref{gradedBetti}(c).  This also allows us to compute the Hilbert-Kunz function as follows; see Theorem~\ref{HKfunction}. 
The leading term agrees with the results of Buchweitz and Chen in \cite{BuCh-97}. 

\begin{bfhpg*}[{Theorem B}]
{\it 
Suppose that $p$ is odd and $d$ is even with $p>d$. 
For very general choices of $f$ of degree $d$ and any $q=p^e$, the 
Hilbert-Kunz function of $R$ is given by
\begin{equation*}
HK(q)=\frac{3}{4}dq^2-\frac{1}{12}(d^3-d).   
\end{equation*}}
\end{bfhpg*}

In addition, it allows us to compute the Castelnuovo-Mumford regularity of the quotients by Frobenius powers of the homogeneous maximal ideal, giving an affirmative answer, for those hypersurfaces, to the question asked by Katzman \cite{Kat-98} of whether its growth is linear in the power; see Theorem~\ref{gradedBetti}(b). 

\begin{bfhpg*}[{Theorem C}]
{\it Suppose that $p$ is odd and $d$ is even with $p>d$. 
For very general choices of $f$ of degree $d$ and any $q=p^e$, the Castelnuovo-Mumford regularity is given by
\begin{equation*}
\reg(R/\fm^{[q]})=\frac{3}{2}q+\frac{1}{2}d-\frac{5}{2}   
\end{equation*}}
\end{bfhpg*}

It would be very interesting to find what general principle lies behind this stability phenomenon for the Betti numbers and whether this behavior has connections with other invariants of singularities such as $F$-signature and $F$-pure threshold, for which much is still unknown even for hypersurfaces.

The question of stability of the Betti numbers is closely related to another question of independent interest, and indeed answering this other question provides the key ingredient to our proof of Theorem A. 
The question is: What is the behavior, as $e$ increases, of the socles 
\[
\soc\!\left( 
R/(x_1^{p^e},\dots,x_n^{p^e})  
\right)
\]
where the socle of a module is the largest submodule that is a $k$-vector space?  These socles have played a key role in other results, such as in \cite{KV-07}. 
The connection between the two problems is further evident in three variables.
Indeed, Kustin and Ulrich prove that in this case, under some mild hypotheses, the graded Betti numbers of these quotients are the same up to a constant shift if and only if their socles are isomorphic as graded vector spaces up to the same shift; see \cite[1.1]{KU-09}. 

Our proof of Theorem A, which does not use Kustin and Ulrich's result, relies on the following technical result, in which we determine the socle behavior for any particular $q$ and general $f$, 
in fact in any number of variables. The result, as stated below, follows from Theorem~\ref{soc-thm}, and yields the isomorphism of socles in Corollary~\ref{KUsoc}.
Note that in the statement 
\begin{itemize}
    \item $s$ is even when $p$ is odd and $d$ is even (or $p=2$ and $n-d$ is even),  
    \item $s$ is odd when $p$ is odd and $d$ is odd (or $p=2$ and $n-d$ is odd).
\end{itemize}

\begin{bfhpg*}[Theorem D]
{\it Let $k$ be a field of characteristic $p>0$, and fix $q=p^e$. 
Consider the standard graded hypersurface ring 
\[
R=k[x_1,\dots,x_n]/(f)
\]
where $n \geq 3$. Let $d=\deg f$ and assume that $q \geq (n+d)/(n-2)$.

If $f$ is  \textit{\qlinkcomp}, 
the following hold for the socle module
$\soc(R/\fm^{[q]})$ where 
\[
s=n(q-1)-d.
\]
\begin{itemize}
\item[(1)] 
Its generators lie in degrees
\[
s_i=\frac{1}{2}(n(q-1)+d-i)
\qquad 
i \in
\begin{cases}
\{2\}
& {\textrm{ if $s$ is even }} \\
\{1, 3\} 
& {\textrm{ if $s$ is odd}} 
\end{cases}
\]

\item[(2)] 
If $n=3$, its dimension satisfies: 
\begin{align*}
&\qquad {\textrm{ if $s$ is even, }} \dim_k \soc \left(R/\fm^{[q]}\right)_{\!s_2}  \!\!\!= 2d.
\\
&\qquad {\textrm{ if $s$ is odd, }}
\dim_k \soc \left(R/\fm^{[q]}\right)_{\!s_1} \!\!\!= d
{\textrm{ \ and \ }}
\dim_k \soc \left(R/\fm^{[q]}\right)_{\!s_3}\!\!\!\leq 3d.
\end{align*}

\item[(3)] 
If $n \geq 4$, its dimension is equal to a polynomial in $q$ with leading term
\[
\frac{2}{(n-2)!} \left[ \left(\frac{n}{2}\right)^{n-2}\!\!\!\!-n\left(\frac{n-2}{2}\right)^{n-2} \right] q^{n-2} 
\]
for $s$ even and bounded by a polynomial in $q$ of degree $n-2$ for $s$ odd. 
Furthermore, the coefficients of these polynomials depend only on $n$ and $d$. 
\end{itemize}
}
\end{bfhpg*}

Note that in the three variable case the result is much tamer and also more explicit in that the exact dimension is determined and constant when $s$ is even and bounded when $s$ is odd. For $n\geq 4$ variables, the socle dimensions are not nearly as well-behaved, which indicates that the story of high syzygies of Frobenius powers of the maximal ideal is more complicated. 
Indeed, as given in part (3) above, for $s$ even, the socle dimensions grow as a polynomial in $q$.
This result in any number of variables could be helpful in future investigations of Frobenius powers. 
Although it may appear strange at first that for $n\geq 4$ the leading term does not involve $d$ (unlike for $n=3$), note that this leading term vanishes at $n=3$; hence the leading term for $n=3$ is actually the next lower degree term, which does happen to involve $d$. 

We prove Theorem~C by considering the linked ideal $J=((x_1^q, \dots, x_n^q) \colon f)$ in the polynomial ring $P=k[x_1,\dots, x_n]$ as in \cite{KRV-12}, which defines a Gorenstein Artinian algebra. 
By the theory of Macaulay inverse systems, such an algebra is determined by its associated inverse polynomial, which turns out to be very simple in this setting; see Lemma~\ref{phi}. 
To investigate this algebra, we develop a theory to describe some of the structure of $P$-free resolutions of certain truncations of $J$. 

More generally, given a standard graded Gorenstein Artinian algebra $A$, written as a quotient $A=P/J$ of the polynomial ring $P=\Sym_k (A_1)$, we use its associated inverse polynomial to determine 
the format of resolutions of quotients $P/J_{\geq m}$ by truncations of the ideal. 
We do not find the minimal free resolutions explicitly, but identify parts of the resolution as cohomologies of certain subcomplexes of Koszul complexes. 

However, when $A$ is what we call  $\frak{c}$-compressed for a complete intersection ideal $\frak{c}$ (see Definition~\ref{relcompressed}), 
we prove that the first few such cohomologies vanish for $m\ge s/2$. 
When this is the case in our setting for $P/J$ with $J=((x_1^q, \dots, x_n^q) \colon f)$ we say that $f$ is \qlinkcomp\ (see Definition \ref{def-qlinkcomp}). 
In any case, this yields enough information that we can read off the degrees of the generators of $J$, and so the application we have in mind goes through. 

Our notion of $\frak{c}$-compressed  is a special case of relatively compressed algebras. Relatively compressed algebras are a useful variant of compressed algebras, which have been studied for many years; see Definition~\ref{oldrelcompressed} for the precise definition, which originally appeared in \cite{IarKan-99, MMN-05}. 
However, not much was known about their resolutions, or even generators.  We extend the methods for {\it compressed} algebras of El Khoury and Kustin in \cite{ElkhKus-14} and Miller and Rahmati in \cite{MiRa-18}
to find resolutions, neither minimal nor completely explicit, of $P/J_{\ge m}$ for every $m$, in  any characteristic; see Theorem~\ref{resn-general}. When $P/J$ is {\it $\frak{c}$-compressed} and $m\ge s/2$ where $s$ is the socle degree of $P/J$, the beginning of the resolution becomes more explicit. 
Section~\ref{sec:relcomp} is dedicated to these results.

In Section~\ref{sec:socle} we use the methods from the previous section to find the number and degrees of generators of the link $J=((x_1^q, \dots, x_n^q) \colon f)$ whenever $P/J$  is a $(x_1^q,\ldots,x_n^q)$-compressed Gorenstein algebra and then apply linkage theory to determine the desired socle degrees for Theorem D; 
see Remark~\ref{muofJ}. 
In Section 5, we prove our main results, Theorems A, B, and C, using the technical input from the previous section. 

We also note the curious fact that the behavior of the resolutions of the Frobenius powers for a \qlinkcomp\ hypersurface can differ from that in the diagonal hypersurface case found in \cite{KRV-12}. For some degrees, the diagonal hypersurface case is  $(x_1^q,\ldots,x_n^q)$-compressed, and for some it is not; see Example~\ref{diagonalexample}. 
So, while we were inspired by the results of Kustin, Rahmati, and Vraciu, our results are a complementary case, rather than a generalization of their results.

The paper is organized as follows.  In Section~\ref{sec:bkgrnd} we review the theory of inverse systems and establish some useful properties and in Section~\ref{sec:compressed} we prove some preliminary results about algebras that are $\frak{c}$-compressed for a complete intersection ideal $\frak{c}$.  
Resolutions of such algebras can be found in Section~\ref{sec:relcomp}. 
Then in Section~\ref{sec:socle} we apply this general theory of resolutions  to find the number and degrees of generators of the link $J=((x_1^q, \dots, x_n^q) \colon f)$ and from this we derive our result for the socles of $R/\fm^{[q]}$. In Section~\ref{sec:frobenius}, for the $(x^q,y^q,z^q)$-compressed case in 3 variables, we compute some of the structure of the resolutions of the Frobenius powers over the hypersurface ring and the  Hilbert-Kunz function of the hypersurface ring,  as well as the regularity of the quotient by Frobenius powers of the maximal ideal. Section~\ref{sec:examples} includes examples illustrating the stability behavior in both the \qlinkcomp\ hypersurface case and the diagonal hypersurface case.

All the algebras occurring in this paper are assumed to be standard graded, and we set $P=k[x_1, \cdots, x_n]$ throughout. We will use $f$ to denote a nonzero homogeneous element of $P$.

\section{Inverse systems}
\label{sec:bkgrnd}
  

In 1918 Macaulay described a one-to-one correspondence between 
Gorenstein Artinian algebras of the form $P/I$, where $P=k[x_1, \cdots, x_n]$, and cyclic 
$P$-submodules of the ring of inverse polynomials $k[x_1^{-1},\dots,x_n^{-1}]$. 
Later Emsalem \cite{Ems-78} and Iarrobino \cite{Iar-84} extended this correspondence to all Artinian algebras $P/I$ and finitely generated $P$-submodules of the ring of inverse polynomials. As such the correspondence can be viewed as a version of Matlis duality in view of \cite{Nor-74}. 
In characteristic zero, they can equivalently be described using the ring of partial derivative operators. 
All of these are now called Macaulay's inverse systems. See also \cite[2.6]{Eise-95} and \cite{IarKan-99}. 

We describe this correspondence in the graded setting, in the Gorenstein case. We use the divided power ring in place of the inverse polynomial ring since they are isomorphic as $P$-modules. 
Consider a homogeneous Gorenstein ideal $I$ primary to the 
homogeneous maximal ideal $(x_1, \dots, x_n)$
so that $A=P/I$ is a graded Artinian Gorenstein algebra. 

Every such ideal $I$ has an 
explicit description as an annihilator of ``divided power polynomials'' as follows. 
Let $V$ be a vector space of dimension $d$ with basis $x_1, \dots, x_n$. 
Let $S(V)$ denote the symmetric $k$-algebra which can be identified with 
$P=k[x_1, \cdots, x_n]$ and let $D(V^*)$ denote the divided power $k$-algebra; that is,
\begin{equation*}
D(V^*)
=\Hom_{{\textrm{gdd-}}k}(S(V),k) 
=\oplus_{i \ge 0} \Hom_{k}(S(V)_i,k)
\end{equation*}
which is a $k$-vector space on the dual basis of monomials 
$x_1^{(i_1)} \cdots x_n^{(i_n)}$ where $x_i^{(0)}$ denotes 1 for all $i$. 
Recall that $D(V^*)$ is a module over $S(V)$ via  
\[
x_1^{j_1} \cdots x_n^{j_n} \cdot (x_1^{(i_1)} \cdots x_n^{(i_n)})
=\begin{cases}
x_1^{(i_1-j_1)} \cdots x_n^{(i_n-j_n)} & {\textrm{if } ~ i_n \ge j_n ~\textrm{for all }} n \\
0 & {\textrm{else}} 
\end{cases}
\]

\begin{rmk}\label{inverse}
In place of $D(V^*)$, one could just as well use the ring of inverse polynomials 
\[
k[x_1^{-1}, \dots, x_n^{-1}]
\]
with the analogous $S(V)$-module structure, that is, with 
$x_i^{\alpha} x_i^{-\beta} = 0$ whenever $\alpha > \beta$. 
Note that  $k[x_1^{-1}, \dots, x_n^{-1}]$ can be identified with the injective hull $E(k)$ of $k$ over $P$, as proved by Northcott in \cite{Nor-74} and so the ideal obtained from an inverse system is actually the annihilator of a finitely generated $k$-submodule of the injective hull. 
\end{rmk}

We now describe the one-to-one correspondence 
between homogeneous Gorenstein ideals $I$ of $P$ with  Artinian and cyclic  
submodules $\langle \varphi \rangle$ of the $P/I$-module  $D(V^*)$.

Given a homogeneous element $\varphi$ of  $D(V^*)$, let 
\[
I(\varphi) = (0\colon_{S(V)} \varphi )
= \{r\in S(V) ~|~ r \varphi=0 \},
\]
which is a graded Artinian ideal of $P$; 
indeed, all monomials of degree greater than 
$s=\deg \varphi$ are in $I$ 
and so $(P/I)_{\ge s}=0$.  

Conversely, given a homogeneous Artinian Gorenstein ideal $I$  of $P$, let 
\[
I^\perp
= (0:_{D(V^*)}I) = \{ \varphi \in D(V^*) \mid I \varphi = 0  \}
\]
It turns out that since $I$ is Artinian Gorenstein, 
$I^\perp$ is a cyclic $S(V)$-submodule $\langle \varphi \rangle$ of $D(V^*)$. 
Such an element $\varphi$ is called an inverse system for $I$. 

This correspondence between Artinian Gorenstein ideals in $P$ and cyclic submodules of $D(V^*)$ was given by Macaulay. It was extended to the non-cyclic finitely-generated submodules in \cite{Ems-78} and  \cite{Iar-84}. 

\begin{exa}
\label{ex-classic}
Taking for the inverse system the divided polynomial $\varphi=x^{(2)}+y^{(2)}+z^{(2)}$
one obtains the ideal 
$I=\ann \varphi = (xy,yz,xz,x^2-y^2,x^2-z^2)$.
\end{exa}

The main result in the theory of inverse systems is that 
these assignments are inverse to each other and that the type of the resulting algebra $P/I$ is equal to the minimal number of generators of the submodule $I^\perp$. In particular, any homogeneous Artinian Gorenstein ideal can be obtained in this way using a single homogeneous $\varphi \in D(V^*)$. 
The degree of $\varphi$ gives the socle degree of the resulting Artinian algebra.

\begin{ipg}
\label{pairings}
We now lift the machinery of the inverse system from $k$-vector spaces to free $P$-modules. 
Let $F$ be a free $P$-module of rank $d$ with basis $X_1, \dots, X_n$. 
We let $S=S(F)$ and $D=D(F^*)$ denote the symmetric $P$-algebra and  the divided power $P$-algebra, respectively. 
We also set $\wedge$ to denote the exterior $P$-algebra $\wedge F$. 
Note that $D$ has an  $S$-module structure analogous to the one described in Remark \ref{inverse}. 
Since $k \to P$ is faithfully flat, properties for the $k$-vector spaces $S(V)$ and $D(V^*)$ transfer to the free $P$-modules $S$ and $D$ in the obvious way. 

Given any homogeneous inverse polynomial in 
$D(V^*)$, we can think of it as a polynomial in $D$ 
by replacing each $x_i$ by $X_i$. 
It should not cause any confusion to call it 
$\varphi$ in either case.  
For any such inverse polynomial   
we get a collection of maps 
  \begin{equation*}
\xymatrixrowsep{2.2pc}\xymatrixcolsep{.7pc}
    \xymatrix{
      &S_0 \ar[d]^{\Phi_0}&S_1 \ar[d]^{\Phi_1}& \cdots  & S_i \ar[d]^{\Phi_i} & \cdots & S_s \ar[d]^{\Phi_s}& S_{s+1} \ar[d] ^{\Phi_{s+1}}&\cdots\\
      &D_{s} &D_{s-1}& \cdots & D_{s-i} & \cdots & D_{0} & 0 &\cdots
  }
  \end{equation*}
defined by
\[
\Phi_i(g)=g\varphi, ~\text{for}~g \in S_i
\]
and where $s=\deg(\varphi)$. For more detail see \cite{MiRa-18}. 
\end{ipg}

\begin{dfn} 
\label{dfn-D'}
Given an inverse polynomial $\varphi$,   we define 
\[
D'_{s-m} = \im \, \left( S_m \xra{\Phi_m} D_{s-m} \right)
\]
Note that each $D_{s-m}'$ is a free direct summand of $D_{s-m}$ as $P$-modules. 
Note further that $\rank D_m'=\rank D_{s-m}'$ by Lemma~\ref{lem-duals} and since dual maps have the same rank.
\end{dfn}

\begin{lem}
\label{lem-duals}
\cite{MiRa-18}
Let $\varphi $ be a divided power polynomial of degree $s$. The induced maps  $\Phi_i \colon S_i \to D_{s-i}$ satisfy the following properties  
  for every integer $i$,
\begin{enumerate}
\item  $\Hom_R(\Phi_i,P) = \Phi_{s-i}$
\item We have $\ker(\Phi_i) \cong \text{coker}(\Phi_{s-i})$ by changing each exponent in every monomial in the kernel to a divided power.
\end{enumerate}  
\end{lem}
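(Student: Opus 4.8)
The plan rests on the observation that, in each degree, $D=D(F^{*})$ is the $R$-linear dual of $S=S(F)$: by construction $D_j=\Hom_R(S_j,R)$ with $\{X^{(a)}\}_{|a|=j}$ the basis dual to the monomial basis $\{X^a\}_{|a|=j}$ of $S_j$, and the contraction action of $S$ on $D$ obeys the associativity identity $(p\varphi)(q)=\varphi(pq)$ for $p\in S_i$, $q\in S_{s-i}$, $\varphi\in D_s$, as one checks on monomials. Consequently $\Phi_i\colon S_i\to D_{s-i}=\Hom_R(S_{s-i},R)$ is exactly the map adjoint to the $R$-bilinear pairing $\langle p,q\rangle=\varphi(pq)$ on $S_i\times S_{s-i}$; writing $\varphi=\sum_{|c|=s}\varphi_cX^{(c)}$, its matrix in the monomial bases is the catalecticant whose $(b,a)$-entry is $\varphi_{a+b}$.

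For (1), the pairing $\langle-,-\rangle$ is symmetric because $S$ is commutative, so the degree-$(s-i)$ catalecticant is the transpose of the degree-$i$ one. Since $S_i$ and $S_{s-i}$ are finitely generated free, applying $\Hom_R(-,R)$ to $\Phi_i$ and using reflexivity---that is, the canonical identifications $\Hom_R(D_{s-i},R)=S_{s-i}$ and $\Hom_R(S_i,R)=D_i$, under which $\{X^b\}$ and $\{X^{(a)}\}$ are the bases dual to $\{X^{(b)}\}$ and $\{X^a\}$---turns $\Phi_i$ into its transpose, which is exactly $\Phi_{s-i}$.

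For (2), consider the short exact sequence $0\to\ker\Phi_i\to S_i\to D'_{s-i}\to 0$, where $D'_{s-i}=\im\Phi_i$. By Definition~\ref{dfn-D'} the module $D'_{s-i}$ is a free direct summand of $D_{s-i}$, in particular free, so the sequence splits and stays exact after applying $\Hom_R(-,R)$; combining this with (1), which identifies the dual of $\Phi_i$ with $\Phi_{s-i}$ and hence its image with $\im\Phi_{s-i}=D'_i$, yields $(\ker\Phi_i)^{*}\cong D_i/D'_i=\coker\Phi_{s-i}$. Since $\ker\Phi_i$ is the kernel of a surjection onto a free module, it is a direct summand of the free module $S_i$, hence free, so $\ker\Phi_i\cong(\ker\Phi_i)^{*}\cong\coker\Phi_{s-i}$; reading this chain of isomorphisms off in the monomial and dual-monomial bases sends any monomial lying in $\ker\Phi_i$ to the corresponding divided-power monomial, which is the description in the statement. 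The equality $\rank D'_m=\rank D'_{s-m}$ recorded in Definition~\ref{dfn-D'} then follows from (1) because a map of finite free modules and its dual have equal rank.

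The only real subtlety---and the step I would treat most carefully---is that the argument for (2) leans entirely on the $D'_\bullet=\im\Phi_\bullet$ being genuine free direct summands: this is what makes the relevant short exact sequences split (so $\Hom_R(-,R)$ remains exact) and keeps $\ker\Phi_i$ free (so it is isomorphic to its dual and the explicit exponent/divided-power identification is literally correct). Over a field this is automatic; in the free-$R$-module setting one should verify that this direct-summand property, asserted in Definition~\ref{dfn-D'}, is established independently of the present lemma, so that no circularity is introduced.
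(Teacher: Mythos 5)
This lemma is not proved in the paper at all — it is quoted from \cite{MiRa-18} — so your attempt can only be measured against the statement and the way it is used later. Your part (1) is correct: it is the standard observation that $\Phi_i$ is adjoint to the symmetric pairing $(p,q)\mapsto\varphi(pq)$, i.e.\ that the two catalecticant matrices are transposes of each other under the identifications $S_j^{*}=D_j$ and $D_j^{*}=S_j$. For part (2), dualizing the split exact sequence $0\to\ker\Phi_i\to S_i\to D'_{s-i}\to 0$ and invoking (1) does give the canonical isomorphism $(\ker\Phi_i)^{*}\cong\coker\Phi_{s-i}$, and $\ker\Phi_i$ is indeed free (it is the extension of scalars of the kernel of a matrix with entries in $k$; note that ``direct summand of a free module'' by itself only gives projective), so an isomorphism $\ker\Phi_i\cong\coker\Phi_{s-i}$ follows. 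This duality, equivalently the rank equality, is what the present paper actually uses (Definition~\ref{dfn-D'}, Lemma~\ref{TFAErelativelycompressed}), and your circularity worry is easily dispelled: $D'_{s-m}$ is a free summand because the matrix of $\Phi_m$ has entries in $k$, exactly as argued in Proposition~\ref{prp-K'free}, independently of this lemma.

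The genuine gap is your last step, where you claim that ``reading the chain off in the monomial bases'' realizes the isomorphism as the exponent-to-divided-power map. The identification $\ker\Phi_i\cong(\ker\Phi_i)^{*}$ implicit in that sentence is the restriction to the kernel of the bilinear form making the monomials orthonormal, and that restriction can be degenerate; moreover $\ker\Phi_i$ need not be spanned by monomials. Concretely, take $n=2$ and $\varphi=X^{(2)}+\lambda X^{(1)}Y^{(1)}-Y^{(2)}$ with $\lambda^{2}=-1$ (e.g.\ $k=\mathbb{F}_5$, $\lambda=2$, or $k=\mathbb{C}$): then $\ker\Phi_1$ is spanned by $X+\lambda Y$, and changing exponents to divided powers sends it to $X^{(1)}+\lambda Y^{(1)}=X\varphi\in\im\Phi_1$, so the proposed explicit map $\ker\Phi_1\to\coker\Phi_1$ is zero even though both sides have rank one. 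Hence the explicit description cannot be deduced from the abstract duality alone; it is valid precisely when $\ker\Phi_i$ is spanned by monomials, in which case it has a one-line proof: $\im\Phi_{s-i}$ is the annihilator of $\ker\Phi_i$ under $D_i=S_i^{*}$ (the inclusion is the identity $(X^{b}\varphi)(u)=(u\varphi)(X^{b})$ plus a rank count), so the classes of the corresponding divided-power monomials form the dual basis of $\coker\Phi_{s-i}\cong(\ker\Phi_i)^{*}$. That monomial situation is exactly how the lemma is applied in this paper, e.g.\ via conditions ($3'$)--($4'$) of Lemma~\ref{TFAErelativelycompressed}; so either add that hypothesis and the short argument above, or state part (2) as the canonical duality $(\ker\Phi_i)^{*}\cong\coker\Phi_{s-i}$.
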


\section{$\frak{c}$-compressed algebras}
\label{sec:compressed}

In this section, we recall 
the definitions of compressed and relatively compressed Artinian algebras, define our new notion of $\frak{c}$-compressed algebras for a complete intersection $\mathfrak{c}$, and translate this definition to certain properties of the maps $\Phi_i$ described in the previous section for the corresponding inverse system. Using this, we give several criteria for an ideal in the polynomial ring $P=k[x_1, \cdots, x_n]$ to be $\frak{c}$-compressed, especially when $\mathfrak{c}$ is a Frobenius power of the maximal ideal $(x_1,\ldots,x_n)$.

First we recall the definition of compressed (but only for the case of Gorenstein algebras) so that the reader can compare the two notions. 
Recall that the Hilbert function of a graded algebra $B$ is defined to be $H_i(B)=\dim_k B_i$. 

  \begin{dfn}
    A graded Artinian Gorenstein algebra $P/J$ whose socle is of degree $s$ is said to be  \emph{compressed} if it satisfies
    \begin{equation}
    \label{eqn-compressed}
  H_i(P/J)=\min\{H_i(P),H_{s-i}(P)\}
    \end{equation}
    for $i=1, \dots, s$.  
	\end{dfn}

\begin{ipg}
Iarrobino, \cite[3.4]{Iar-84}, see also Boij and Laksov \cite[3.4]{BoLak-94}, proves  that being compressed holds generally, even in the non-Gorenstein level case. 
Specifically, for general choices of inverse system of divided power polynomials of the same degree, 
the corresponding algebra is compressed. 
Note that the Hilbert function of a compressed algebra is maximal among all the algebras with the same socle degrees.
\end{ipg}

In this paper, the algebras we study are not  compressed, but  satisfy a more general condition which we call $\frak{c}$-compressed, defined below.

\begin{dfn}
\label{relcompressed}
Let $\fc \subseteq P$ be a homogeneous complete intersection ideal such that $P/\fc$ is Artinian. 
Let $J \subseteq P$ be a homogeneous Gorenstein ideal containing $\fc$. 
We say that the algebra $A=P/J$, or equivalently the ideal $J$, is \textit{ $\frak{c}$-compressed} if, for every $i$,  $H_i(P/J)$ takes on the maximum possible value, i.e.,
\[
H_i(P/J)=\min\{H_i(P/\fc),H_{s-i}(P/\fc)\}
\]
where $s$ is the degree of the socle of $A$. 

Since $H_i(P/\fc)$ increases as $i$ increases, the minimum in the formula is $H_i(P/\fc)$ for $i \le \frac{s}{2}$ and $H_{s-i}(P/\fc)$ for $i \ge \frac{s}{2}$.
In particular, note that the condition implies that $J_i=\fc_{i}$ in degrees $i\leq s/2$. 

We say that $A$ is {\it $\fc$-compressed in degree $i$} if the equality above holds for that particular value of $i$. 
\end{dfn}

Note that this notion is stronger than the relatively compressed condition, explored for monomial complete intersection ideals by Iarrobino and Kanev in \cite{IarKan-99} and introduced formally by Migliore, Mir\'o-Roig, and Nagel as a generalization of the compressed condition that has been studied in many places. We recall their definition in the Gorenstein case. 
\begin{dfn}\cite{MMN-05}
\label{oldrelcompressed}
A standard graded Artinian Gorenstein 
algebra $A=P/J$ with socle degree $r$ is said to be \textit{relatively compressed with respect to a complete intersection} $\fc$ if the function $H_i(P/J)$ 
is maximal among Gorenstein ideals $P/J'$ where $\fc \subseteq J'$ and $J'$ has socle degree $r$.
\end{dfn}

This condition does not necessarily imply that the algebras that are relatively compressed with respect to $\fc$  reach the upper bound as given in the definition of $\fc$-compressed; see Remark \ref{rmk-general}. 

In the next lemma, we give an interpretation of the $\fc$-compressed condition
in terms of the maps $\Phi_i$ for the corresponding inverse system.
Further, when the complete intersection is $(x_1^q,\ldots,x_n^q)$ we give an even more explicit version.
Recall from \ref{dfn-D'} that $D'_{s-m} = \im \Phi_m$ is a free summand of $D'_{s-m}$. 

\begin{lem}
\label{TFAErelativelycompressed}
Let $\fc \subseteq P$ be a homogeneous complete intersection ideal such that $P/\fc$ is Artinian and let $C$ be the lift of $\fc$ to the symmetric algebra $S$ obtained by replacing each $x_i$ by $X_i$. 
Let $J \subseteq P$ be a homogeneous Gorenstein ideal containing $\fc$  with inverse polynomial $\varphi$ and let $s$ be the degree of the socle of $P/J$.  The following conditions are equivalent.
\begin{enumerate}
\item $J$ is $\fc$-compressed.
\item 
For every $i \le \frac{s}{2}$ (equivalently for every $i \ge \frac{s}{2}$), $J$ is $\fc$-compressed in degree~$i$.
\item For $i \le \frac{s}{2}$, the kernel of the map $\Phi_i\colon S_i \to D_{s-i}$ equals the $i$th graded piece of the ideal $C$.  
\item 
For $i \ge \frac{s}{2}$, the cokernel of the map $\Phi_i\colon S_i \to D_{s-i}$ is generated minimally by the set $C_{s-i}'$ of  polynomials in $D_{s-i}$ obtained from a minimal generating set of polynomials in $C_{s-i}$ by changing all powers to divided powers.

\item For every $i$ (equivalently for every $i \le \frac{s}{2}$ or for every $i \ge \frac{s}{2}$) one has an equality 
\[
\rank D_i'
=\min\{\rank (S_i/C_i), \rank (S_{s-i}/C_{s-i})\}
\]
\end{enumerate}
In particular, when $\fc=(x_1^q,\ldots,x_n^q)$, and so $C=(X_1^q,\ldots,X_n^q)$, the conditions (3) and (4) translate to 
\begin{enumerate}
\item[($3'$)] 
For $i \le \frac{s}{2}$ the kernel of the map $\Phi_i\colon S_i \to D_{s-i}$ is generated by the monomials $X_1^{a_1} \cdots X_n^{a_n}$ in $S_i$ with $a_j \ge q$ for some  $1 \le j \le n$.
\item[($4'$)] 
For $i \ge \frac{s}{2}$, the cokernel of the  map $\Phi_i\colon S_i \to D_{s-i}$ is generated by the monomials $X_1^{(a_1)} \cdots X_n^{(a_n)}$ in $D_{s-i}$ with $a_j \ge q$ for some $1 \le j \le n$.
\end{enumerate}
\end{lem}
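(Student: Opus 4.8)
The plan is to translate each of the five conditions into a numerical statement about the graded pieces $S_i,C_i,D_{s-i}$ and the maps $\Phi_i$, and then to observe that all of these statements coincide. The dictionary comes from the inverse system correspondence of \secref{bkgrnd} together with \ref{lem-duals}. Writing $J=I(\varphi)$, the degree-$i$ component of the lift of $J$ to $S$ is $\ker(\Phi_i\colon S_i\to D_{s-i})$, while $\im\Phi_i=D'_{s-i}$ by \ref{dfn-D'}, so that
\[
H_i(P/J)=\rank\im\Phi_i=\rank D'_{s-i}=\rank D'_i,
\qquad
H_i(P/\fc)=\rank(S_i/C_i),
\]
the middle equality coming from \ref{dfn-D'} and the last from the fact that $C$ lifts $\fc$. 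Since $\fc\subseteq J$ we have $C_i\subseteq\ker\Phi_i$ for every $i$, so $\ker\Phi_i=C_i$ if and only if $\rank\ker\Phi_i=\rank C_i$, i.e. $H_i(P/J)=H_i(P/\fc)$; and \ref{lem-duals}(2) supplies, for each $i$, an isomorphism $\ker\Phi_{s-i}\xrightarrow{\sim}\coker\Phi_i$ obtained by composing the identification $S_{s-i}\xrightarrow{\sim}D_{s-i}$, $X^a\mapsto X^{(a)}$, with the projection $D_{s-i}\onto\coker\Phi_i$.

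Given this dictionary, (1)$\Leftrightarrow$(5) is just a rereading, since (5) asserts $\rank D'_i=\min\{\rank(S_i/C_i),\rank(S_{s-i}/C_{s-i})\}$ for all $i$. For (1)$\Leftrightarrow$(2) I would use that $P/J$ is Gorenstein of socle degree $s$, whence $H_i(P/J)=H_{s-i}(P/J)$, while $\min\{H_i(P/\fc),H_{s-i}(P/\fc)\}$ is symmetric under $i\leftrightarrow s-i$; therefore ``relatively compressed in degree $i$'' and ``relatively compressed in degree $s-i$'' are the same condition, so imposing it for all $i\le s/2$ (or all $i\ge s/2$) is the same as imposing it for all $i$. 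For (2)$\Leftrightarrow$(3): by the remark following \ref{relcompressed} the minimum equals $H_i(P/\fc)$ once $i\le s/2$, so relative compression in degree $i\le s/2$ says precisely $H_i(P/J)=H_i(P/\fc)$, which by the dictionary is $\ker\Phi_i=C_i$.

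For (3)$\Leftrightarrow$(4) I would run the isomorphism $\ker\Phi_{s-i}\xrightarrow{\sim}\coker\Phi_i$ of \ref{lem-duals}(2) with $i\ge s/2$, so $s-i\le s/2$. Since $C_{s-i}\subseteq\ker\Phi_{s-i}$, the equality $\ker\Phi_{s-i}=C_{s-i}$ holds if and only if the restriction of that isomorphism to $C_{s-i}$ is still surjective onto $\coker\Phi_i$; and because the isomorphism is precisely ``change each power to a divided power'', it carries a basis of $C_{s-i}$ onto $C'_{s-i}$, so this surjectivity is exactly the assertion that $\coker\Phi_i$ is minimally generated by $C'_{s-i}$. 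Reindexing $j=s-i$ matches (3) over all $j\le s/2$ with (4) over all $i\ge s/2$. Finally, when $\fc=(x_1^q,\dots,x_n^q)$ the lift $C=(X_1^q,\dots,X_n^q)$ is a monomial ideal, so $C_i$ has as a $k$-basis exactly the monomials $X_1^{a_1}\cdots X_n^{a_n}$ of degree $i$ with $a_j\ge q$ for some $j$, and correspondingly $C'_{s-i}$ is the set of divided-power monomials $X_1^{(a_1)}\cdots X_n^{(a_n)}$ of degree $s-i$ with $a_j\ge q$ for some $j$; substituting these descriptions into (3) and (4) yields ($3'$) and ($4'$).

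The step I expect to need the most care is (3)$\Leftrightarrow$(4): one has to be precise that \ref{lem-duals}(2) really does identify $\ker\Phi_{s-i}$, via $X^a\mapsto X^{(a)}$, with a direct complement of $\im\Phi_i$ inside $D_{s-i}$, so that ``minimally generated by $C'_{s-i}$'' is a literal basis statement and not merely a count of dimensions, and that ``changing all powers to divided powers'' is well defined on the subspace $C_{s-i}$ (it is, since monomials form a basis of $S_{s-i}$). Everything else is formal, using only the Gorenstein symmetry of the Hilbert function of $P/J$ and the unimodality of the Hilbert function of the complete intersection $P/\fc$ recorded after \ref{relcompressed}; no input beyond \secref{bkgrnd} and \ref{lem-duals} is required.
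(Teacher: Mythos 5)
Your proof is correct and takes essentially the same approach as the paper's: both set up the same rank dictionary ($H_i(P/J)=\rank D'_{s-i}=\rank D'_i$, $H_i(P/\fc)=\rank(S_i/C_i)$, $H_i(P/J)=H_{s-i}(P/J)$), both deduce the equality $\ker\Phi_i=C_i$ from a rank equality together with the containment $C_i\subseteq\ker\Phi_i$, and both obtain (3)$\Leftrightarrow$(4) from the duality $\Hom_R(\Phi_i,R)=\Phi_{s-i}$ via \lemref[Lemma~]{duals}. The only difference is that you chain the equivalences as (5)$\Leftrightarrow$(1)$\Leftrightarrow$(2)$\Leftrightarrow$(3)$\Leftrightarrow$(4) and spell out (3)$\Leftrightarrow$(4) in more detail, whereas the paper groups (1),(2),(5) together in a single step and then proves (5)$\Rightarrow$(3)$\Rightarrow$(2); these are cosmetic rearrangements of the same argument.
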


\begin{proof}
First note that  
\begin{align}\label{ranks}
\begin{split}
\dim_k J_i &= \rank \ker \Phi_i
\\
H_i(P/J) &= \rank (S_i/\ker \Phi_i) = \rank D'_{s-i}
\\ 
H_i(P/\fc) &= \rank (S_i/C_i)
\\
\rank D_i' &= \rank D'_{s-i}
\end{split}
\end{align}
where the first two follow from (\ref{pairings}), the third is obvious, and the last one is from (\ref{dfn-D'}) and the fact that dual maps have the same rank. 
Therefore we also get $H_i(P/J)=H_{s-i}(P/J)$. 
Hence (1), (2) and (5) are equivalent since the relevant equations are unchanged if we replace $i$ with $s-i$.
Furthermore, (3) is equivalent to (4) by the duality of the maps $\Phi_i\colon S_i \to D_{s-i}$ and $\Phi_{s-i}\colon S_{s-i} \to D_i$.

For the proof of (5) implies (3), if $i \le \frac{s}{2}$ we have $\rank D_i'
=\rank (S_i/C_i)$ by hypothesis. 
So since $\rank D_i'= \rank (S_i/\ker \Phi_i)$ and $C_i \subseteq \ker \Phi_i$, we get that $C_i = \ker \Phi_i$. 
Lastly note that (3) implies (2) by the second and third formulas in (\ref{ranks}).
\end{proof}

\begin{rmk}
\label{rmk-TFAEsinglei}
The equivalences (2)--(5) in Lemma~\ref{TFAErelativelycompressed} remain valid for a fixed $i\leq s$ as long as one replaces ``for any $i \geq \frac{s}{2}$'' with ``for  $s-i$''. 
\end{rmk} 

For the rest of this section, assume that $\fc=(x_1^q,\ldots,x_n^q)$. 
First we give a concrete version of (5) in Proposition~\ref{prp-D'basis} that is similar to ($3'$) and ($4'$). 
Then we use it to show that, in that case, it is actually only necessary to confirm the $\fc$-compressed condition (or the equivalent condition) in one degree; see Lemma \ref{lem-surjective} and Corollary \ref{relativelyquick} for details.

This requires us to first obtain the inverse polynomial of $J$, which turns out to be surprisingly simple: 

\begin{lem}
\label{phi}
For any Gorenstein ideal $J \subseteq P$ that contains $\fc=(x_1^q,\ldots,x_n^q)$,  
there exists a homogeneous $f\in P$ of degree $d$ with $1<d<q$ such that $J$ is of the form  
\[
J=((x_1^q,\ldots,x_n^q):f)
\]
The inverse polynomial of $J$ is 
\[
\varphi=f[x_1^{(q-1)} \cdots x_n^{(q-1)}]
\]
where multiplication on the left by $f$ is via the $S$-module structure on $D$. 
Note that $\varphi$ can also be written in inverse variables as   
\[
\varphi=\frac f{(x_1^{q-1} \cdots x_n^{q-1})}
\]
as in Remark~\ref{inverse}. 
In particular, its degree is 
\[
\deg \varphi 
= s 
= n(q-1)-d 
{\textrm{ \ \ \ where \ \ }}
d=\deg f
\]

Conversely, any $\varphi$ that is a linear combination of divided power monomials of degree $s<n(q-1)$ with each variable having power strictly less than $q$ can be written in the form above and so provides an inverse system whose associated ideal $J$ is as above. 
\end{lem}

\begin{proof} 
The first part follows from the theory of linkage. 

It suffices to show that $\Ann_P (\varphi)=J=((x_1^q,\ldots,x_n^q):f)$. Suppose that $g \in \Ann_P (\varphi)$. This is equivalent to $gf[x_1^{(q-1)} \cdots x_n^{(q-1)}]=0$ in the divided power ring. This holds if and only if $gf$ is a polynomial where each term has a factor of $x_i^a$ with $a \ge q$ for some $i$. This is true if and only if $gf \in (x_1^q,\ldots,x_n^q)$, if and only if $g \in J_q$.

The last statement follows from the form of $\varphi$ given above. 
\end{proof}

Recall that an ideal $J$ is $\fc$-compressed in degree $i$ if the equality
\begin{equation*}
H_i(P/J)=\min\{H_i(P/\fc),H_{s-i}(P/\fc)\}
\end{equation*}
holds for that integer $i$. 
By the arguments in the proof of Lemma~\ref{TFAErelativelycompressed} (see Remark~\ref{rmk-TFAEsinglei}) this is equivalent to the condition that the rank of the  induced map $\Phi_i:S_i \to D'_{s-i}$  is equal to  $\min\{H_i(P/\fc),H_{s-i}(P/\fc)\}$ for that integer $i$.
For any fixed $i\leq \frac{s}{2}$, or $i \geq \frac{s}{2}$, respectively, it is also equivalent to parts (3) and (4), respectively, of Lemma~\ref{TFAErelativelycompressed} . 

The following consequence of Lemma~\ref{phi} will turn out to be very useful later.

\begin{prp}
\label{prp-D'basis}
Let $\fc=(x_1^q,\ldots,x_n^q)$. 
For any $a \geq s/2$, $J_q$ is $\fc$-compressed in degree $a$
if and only if the free submodule $D'_{s-a} \subseteq D_{s-a}$ has basis given by all the inverse monomials $X_1^{(a_1)}\cdots X_n^{(a_n)}$ of degree $s-a$ such that $a_j < q$ for all $j=1,\dots,n$. 
\end{prp}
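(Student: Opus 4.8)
The plan is to unravel both sides in terms of the map $\Phi_a\colon S_a \to D_{s-a}$ and the description of $\varphi$ from Lemma~\ref{phi}, and then compare dimensions. First I would recall that, by the discussion preceding this proposition (and Remark~\ref{rmk-TFAEsinglei} applied to $i=a\ge s/2$, equivalently to its dual $s-a\le s/2$), the ideal $J$ being relatively compressed with respect to $\fc$ in degree $a$ is equivalent to part ($4'$) of Lemma~\ref{TFAErelativelycompressed}: the cokernel of $\Phi_a\colon S_a\to D_{s-a}$ is generated by those divided-power monomials $X_1^{(a_1)}\cdots X_n^{(a_n)}$ of degree $s-a$ with $a_j\ge q$ for some $j$. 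Since $D'_{s-a}=\im\Phi_a$ is a free direct summand of $D_{s-a}$ (Definition~\ref{dfn-D'}), this is in turn equivalent to the statement that $D'_{s-a}$, as a summand complementary to the span of those ``bad'' monomials, has as a basis exactly the remaining monomials — namely all $X_1^{(a_1)}\cdots X_n^{(a_n)}$ of degree $s-a$ with $a_j<q$ for every $j$. That is essentially the asserted equivalence, but stated this way it rests on knowing that $D'_{s-a}$ is actually \emph{spanned} by the good monomials, not merely that the bad ones span a complement; so the argument needs a little more care, and this is where Lemma~\ref{phi} enters.

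The key computation is to show directly that $\im\Phi_a$ is contained in the span of the good monomials, and then the relatively compressed condition is precisely the statement that this containment is an equality (i.e.\ that the span of the good monomials has rank equal to $\rank D'_{s-a}=H_a(P/J)$). For the containment: by Lemma~\ref{phi}, $\varphi$ is a linear combination of divided-power monomials of degree $s$ in which every variable appears to a power $<q$. For $p\in S_a$ a monomial $X_1^{b_1}\cdots X_n^{b_n}$ of degree $a$, the product $p\varphi$ in the divided power module is, by the module structure, a linear combination of monomials $X_1^{(c_1)}\cdots X_n^{(c_n)}$ with $c_j = e_j - b_j$ where $X_1^{(e_1)}\cdots X_n^{(e_n)}$ runs over the monomials of $\varphi$; since each $e_j<q$, we get $c_j<q$ for all $j$. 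Hence $\Phi_a(S_a)$ lies in the span $G_{s-a}$ of the good monomials, so $D'_{s-a}\subseteq G_{s-a}$.

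Now I would finish by comparing ranks. On one hand $\rank D'_{s-a}=\rank D'_a = H_a(P/J)$ by the formulas in \eqref{ranks}. On the other hand, $\rank G_{s-a}$ is the number of monomials of degree $s-a$ in $D$ with every exponent $<q$, which is exactly $\rank(D_{s-a}) - \rank(\text{span of bad monomials})$; a straightforward bookkeeping with the complete intersection $C=(X_1^q,\ldots,X_n^q)$ identifies this with $\rank(S_{s-a}/C_{s-a}) = H_{s-a}(P/\fc)$, and since $a\ge s/2$ we have $s-a\le s/2$, so $H_{s-a}(P/\fc)=\min\{H_a(P/\fc),H_{s-a}(P/\fc)\}$. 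Therefore $D'_{s-a}\subseteq G_{s-a}$ is an equality of free summands of $D_{s-a}$ if and only if $H_a(P/J)=\min\{H_a(P/\fc),H_{s-a}(P/\fc)\}$, which is exactly the definition of $J$ being relatively compressed with respect to $\fc$ in degree $a$. Combining the two directions gives the proposition. The main obstacle is the bookkeeping in the last step — making sure that ``good monomials of degree $s-a$ in $D$'' is counted correctly and matched with $H_{s-a}(P/\fc)$, and that the inclusion of free summands of equal rank forces equality (which it does, since a free summand of a free module of the same rank is the whole thing, the quotient being free of rank zero). None of this is deep, but it is the place where an off-by-one in degrees could creep in.
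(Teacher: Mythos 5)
Your proposal is correct and follows essentially the same route as the paper: the containment $D'_{s-a}\subseteq Y_{s-a}$ (span of the monomials with all exponents $<q$) deduced from the explicit form of $\varphi$ in Lemma~\ref{phi}, followed by a rank count that identifies equality with the relatively compressed condition in degree $a$ via Lemma~\ref{TFAErelativelycompressed} and Remark~\ref{rmk-TFAEsinglei}. The only cosmetic difference is that you compare ranks against $H_{s-a}(P/\fc)$ directly from the definition, whereas the paper phrases the count through condition \eqclbl{4$'$} on $\coker\Phi_a$; the argument is the same.
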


\begin{proof}
Let $a\geq s/2$ and we wish to prove that $D'_{s-a}$ is equal to 
\begin{equation*}
Y_{s-a}=k\left\{X_1^{(a_1)} \cdots X_n^{(a_n)} \mid \sum a_i=s-a  \text{ and } a_j<q \text{ for all } 1 \le j \le n\right\}
\end{equation*}
Recall that $D_{s-a}'$ consists of the elements of $D_{s-a}$ that can be written in the form $G\Phi$, where $G \in S_{a}$. 
Recall by Lemma~\ref{phi} that 
$\varphi=f[{x_1^{(q-1)} \cdots x_n^{(q-1)}}]$.
Letting $F$ denote the polynomial obtained from $f$ by replacing $x_i$ with written in $X_i$, 
one has $\Phi=F[{X_1^{(q-1)} \cdots X_n^{(q-1)}}]$. 

The inclusion
$D_{s-a}' \subseteq Y_{s-a}$ is evident as $G\Phi$ has degree at most $(q-1)$ in each $X_i$, since both $G$ and $F$ have only positive powers of the variables $X_i$.
On the other hand, by Lemma \ref{TFAErelativelycompressed} and Remark~\ref{rmk-TFAEsinglei}, the ideal $J_q$ is $\fc$-compressed in degree $a$ if and only if the divided power monomials $X_1^{(a_1)} \cdots X_n^{(a_n)}$ with some $a_i \ge q$ form a basis for the free module $\coker \, \Phi_{a} = D_{s-a}/D_{s-a}'$. Hence $D_{s-a}'$ has the same vector space dimension as $Y_{s-a}$ in this case, and we must have $D_{s-a}'=Y_{s-a}$ whenever $J_q$ $\fc$-compressed in degree $a$.
\end{proof}

For the sake of brevity we introduce the following term, which we will use in our results in Sections \ref{sec:socle}, \ref{sec:frobenius}, and \ref{sec:examples}.

\begin{dfn}
\label{def-qlinkcomp}
Let $f \in P$ be homogeneous, and set $J=((x_1^q,\ldots,x_n^q):f)$.  
We say that $f$ is \textit{\qlinkcomp} if the ideal $J$ is $(x_1^q,\ldots,x_n^q)$-compressed (see Definition \ref{relcompressed} for the latter).
\end{dfn}

\begin{rmk} 
\label{rmk-general}
We claim that, for every polynomial $f$ of fixed degree $d$ and  every fixed $q=p^e$, being \qlinkcomp\ is a Zariski open condition on the coefficients of $f$.  
Indeed, by Lemma~\ref{TFAErelativelycompressed}, this is equivalent to the maps $\Phi_i\colon S/(x_1^q,\ldots,x_n^q) \to D$ induced by the inverse polynomial of $J$ having maximal possible rank for all $i \leq s/2$ where $s=n(q-1)-d$. This is an open condition on the entries of matrices giving these maps (via the maximal minors), but these entries are just the coefficients of $\varphi$. 
But by Lemma~\ref{phi} the inverse polynomial of $J$ is 
\[
\varphi=f[x_1^{(q-1)} \cdots x_n^{(q-1)}]
\]
and so the coefficients of $\varphi$ are exactly the coefficients of $f$, and we are done. 

So, since the condition is an open one, for it to hold generally, it suffices to determine existence, that is, the Zariski open set of coefficients is nonempty. 
It does distinctly fail in certain cases. 
One of these can be derived from an example described by Migliore, Mir\'o-Roig, and Nagel in \cite[Ex 2.12]{MMN-05} in which general choices of a Gorenstein ideal in 3 variables of socle degree 8 containing the 3 quartic generators of a complete intersection ideal $\fc$ are not $\fc$-compressed because the Hilbert function is not the expected one. (These are in fact the link of a polynomial $f$ of degree $3(4-1)-8=1$. However, we can also find many nonlinear examples of such $f$ using \texttt{Macaulay2} \cite{M2}, for example, for $d=2, p=2, e=1,2$.)

For our results on the stability of Betti numbers to hold for all $q$, one thus needs very general choices of $f$ to be \qlinkcomp, that is, choices of $f$ whose coefficients lie in a nonempty countable intersection of Zariski open sets of polynomials of some fixed degree $d$. So, if $k$ is an algebraically closed field, this would mean that ``random choices'' of $f$ should work.

Fortunately, in the setting considered in our paper, Camphire has proved that for each even degree $d$, there is an example of a hypersurface in 3 variables that is \qlinkcomp\ for {\it all} odd $p>d$ and all powers $q$; see \cite{Camphire-thesis}; hence the intersection of Zariski open sets where our results hold is indeed nonempty.
\end{rmk}

\begin{rmk}
\label{relcompforallq}
A stronger question one might ask is: If the Gorenstein ideal $((x_1^q,\ldots,x_n^q) \colon f)$ is $(x_1^q,\ldots,x_n^q)$-compressed for some value of $q$, does the same hold for all larger $q$? 
This is a relatively concrete question in view of 
Lemma~\ref{phi} and Proposition~\ref{prp-D'basis}.

Unfortunately, the answer is negative; in the work of Camphire mentioned in Remark \ref{rmk-general} \cite{Camphire-thesis}, he has found examples of hypersurfaces in 3 variables that are \qlinkcomp\ for some value of $q$ but not for $pq$.
\end{rmk}

As a brief aside, we use Proposition~\ref{prp-D'basis} to obtain the following lemma whose corollary gives a shortcut to deciding whether an ideal is $\fc$-compressed. This generalizes a basic and useful lemma from the compressed case \cite{MiRa-18}.

\begin{lem}
  \label{lem-surjective}
Let $\fc=(x_1^q,\ldots,x_n^q)$ and let $\varphi$ be a divided power polynomial of degree $s$. 
If an ideal $J$ is $\fc$-compressed in degree $m$ for some integer $m\ge  \frac{s}{2}$ then $J$ is $\fc$-compressed in degree $m+i$ for every integer $i \ge 0$.
\end{lem}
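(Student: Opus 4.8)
The plan is to translate the statement into the language of the summand $D'_{\bullet}$ via Proposition~\ref{prp-D'basis} and then show that a single surjectivity-type condition propagates upward in degree. Concretely, fix $m \ge s/2$ for which $J$ is relatively compressed with respect to $\fc$ in degree $m$. By Proposition~\ref{prp-D'basis} this is equivalent to the statement that $D'_{s-m}$ equals the full span $Y_{s-m}$ of all divided-power monomials $X_1^{(a_1)}\cdots X_n^{(a_n)}$ of degree $s-m$ with every $a_j < q$. I want to deduce the analogous equality $D'_{s-m-i} = Y_{s-m-i}$ for all $i \ge 0$, which again by Proposition~\ref{prp-D'basis} gives relatively compressed in degree $m+i$. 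Since an easy induction reduces everything to the single step $i = 1$, it suffices to prove: if $D'_{s-m} = Y_{s-m}$ then $D'_{s-m-1} = Y_{s-m-1}$.

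The key step is a ``one variable down'' argument on the module structure of $D$ over $S$. We always have the inclusion $D'_{s-m-1} \subseteq Y_{s-m-1}$ by the argument in the proof of Proposition~\ref{prp-D'basis} (every element of $D'$ is of the form $g\varphi$ with $\varphi = f/(X_1^{(q-1)}\cdots X_n^{(q-1)})$, which forces each variable to appear to power $< q$). For the reverse inclusion, take any monomial $\mu = X_1^{(a_1)}\cdots X_n^{(a_n)} \in Y_{s-m-1}$, so $\sum a_j = s-m-1$ and each $a_j < q$. Pick an index $\ell$ with $a_\ell < q-1$; such an $\ell$ exists because otherwise $s-m-1 = \sum a_j \ge n(q-1) > s$, a contradiction. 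Then $\nu := X_\ell^{(a_\ell + 1)} \prod_{j \ne \ell} X_j^{(a_j)}$ lies in $Y_{s-m}$, hence in $D'_{s-m}$ by hypothesis, so $\nu = h\varphi$ for some $h \in S_m$. Applying the module action, $X_\ell \cdot \nu = \mu$, and therefore $\mu = X_\ell \cdot (h\varphi) = (X_\ell h)\varphi \in \im(\Phi_{m+1}) = D'_{s-m-1}$. This proves $Y_{s-m-1} \subseteq D'_{s-m-1}$, hence equality, completing the induction step.

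Finally I would close the loop: having shown $D'_{s-(m+i)} = Y_{s-(m+i)}$ for all $i \ge 0$, Proposition~\ref{prp-D'basis} (which applies since $m+i \ge s/2$) gives that $J$ is relatively compressed with respect to $\fc$ in degree $m+i$, as claimed. I expect the main obstacle to be purely bookkeeping: making sure the index $\ell$ with $a_\ell < q-1$ always exists (handled by the degree count $s-m-1 < n(q-1)$, which follows from $m \ge s/2 \ge 0$ and $s < n(q-1)$ by Lemma~\ref{phi}), and being careful that multiplication by $X_\ell$ genuinely sends the chosen element of $D'_{s-m}$ onto $\mu$ rather than to zero — which is exactly why one must raise the exponent $a_\ell$ by $1$ rather than picking an arbitrary variable. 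There is no homological input needed; the whole argument is a direct manipulation of the $S$-module $D$ together with the combinatorial description of $Y_\bullet$.
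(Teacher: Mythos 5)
Your proof is correct and follows essentially the same route as the paper: both translate the hypothesis via Proposition~\ref{prp-D'basis} into the equality $D'_{s-m}=Y_{s-m}$ and then use the $S$-module structure of $D$ (multiplying preimages under $\Phi_m$ by monomials whose exponents stay below $q$) to show $\Phi_{m+i}$ hits all of $Y_{s-m-i}$. The only difference is cosmetic: you descend one degree at a time by a single variable $X_\ell$, while the paper bumps all exponents at once and thereby also records an explicit splitting of $\Phi_{m+i}$, which your argument does not need.
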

\begin{proof}
Suppose that $J$ is $\fc$-compressed in degree $m$ for some $m\ge  \frac{s}{2}$. We show   that the rank of $\Phi_{m+i}:S_{m+i} \to D'_{s-m-i}$  is equal to 
\[
\min\{H_{m+i}(P/\fc),H_{s-m-i}(P/\fc)\}
\]
for all $i \geq 0$. Set
\[
 Y_a = k\left\{X_1^{(a_1)} \cdots X_n^{(a_n)} \mid\sum a_i=a  \text{ and } a_j<q \text{ for all } 1 \le j \le n\right\}.
\] 
and recall that $D_a' \subseteq Y_a$ as explained in the proof of Proposition~\ref{prp-D'basis}.
By the same proposition, this containment is an equality for $a=s-m$. 
We show that  $Y_{s-m-i}$ is in the image of $\Phi_{m+i}$ which implies that $D_{s-m-i}' = Y_{s-m-i}$. 
We do this by forming the  composition of maps of free $P$-modules
\[
\sigma_{m+i} \colon 
Y_{s-m-i} \xra{}Y_{s-m}=D'_{s-m} \xra{\sigma_m} S_m \xra{} S_{m+i}
\] 
where the first map sends any divided power monomial $X_1^{(a_1)}\cdots X_n^{(a_n)}$ to the monomial $X_1^{(a_1+b_1)} \cdots X_n^{(a_n+b_n)}$, where $b_j=\min\{q-a_j-1,i-\sum_{t=1}^{j-1} b_t\}$, the map $\sigma_m$ is a splitting of $\Phi_m$, 
and the last map sends a monomial $X_1^{a_1} \cdots X_n^{a_n}$ to $X_1^{a_1+b_1} \cdots X_n^{a_n+b_n}$, with the $b_j$ chosen as above. Note that, since the exponents remain less than $q$ under the first map, the image lands in $Y_{s-m}=D_{s-m}'$.

We show that $\sigma_{m+i}$ gives a preimage for each monomial in $Y_{s-m-i}$:
\[
\begin{aligned}
\Phi_{m+i}\circ \sigma_{m+i}(X_1^{a_1} \cdots X_n^{a_n}) &=\Phi_{m+i} \left(X_1^{b_1} \cdots X_n^{b_n}(\sigma_m(X_1^{(a_1+b_1)} \cdots X_n^{(a_n+b_n)} )\right)\\
&=\Phi_{m+i}\left(\sigma_m\left(X_1^{(a_1+b_1)} \cdots X_n^{(a_n+b_n)}\right)(X_1^{b_1} \cdots X_n^{b_n})\right) \\
&= \left(\Phi_m \sigma_m(X_1^{(a_1+b_1)} \cdots X_n^{(a_n+b_n)})\right)(X_1^{b_1} \cdots X_n^{b_n}) \\
&= (X_1^{(a_1+b_1)} \cdots X_n^{(a_n+b_n)}) \cdot (X_1^{b_1} \cdots X_n^{b_n}) \\
&=X_1^{(a_1)} \cdots X_n^{(a_n)}
\end{aligned}
\]
where the second equality follows from the commutativity of the symmetric algebra $S$ and the third one holds because $D$ is an $S$-module.
Hence $\Phi_{m+i}$ surjects onto $Y_{s-m-i}$ and so indeed $D_{s-m-i}'=Y_{s-m-i}$. 
So we have in fact constructed a $P$-module homomorphism $\sigma_{m+i}$ splitting $\Phi_{m+i}$ since these monomials form a basis of $D_{s-m-i}'$. 
\end{proof}

Using Lemma~\ref{lem-duals} and Lemma~\ref{TFAErelativelycompressed}, we get the following as an immediate corollary. 

\begin{cor}
  \label{relativelyquick}
Let $\fc=(x_1^q,\ldots,x_n^q)$ and let $\varphi$ be a divided power polynomial of degree $s$. 
If $J_q$ is $\fc$-compressed  in degree $ \lceil \frac{s}{2} \rceil$
then $J_q$ is $\fc$-compressed. 
\end{cor}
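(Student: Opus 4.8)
The plan is to deduce Corollary~\ref{relativelyquick} directly from Lemma~\ref{lem-surjective}, Lemma~\ref{lem-duals}, and the equivalences of Lemma~\ref{TFAErelativelycompressed}. The idea is that being relatively compressed is a symmetric condition about all degrees $i$, which splits naturally into the ``upper half'' $i \ge s/2$ and the ``lower half'' $i \le s/2$; Lemma~\ref{lem-surjective} already handles the upper half starting from a single degree, and duality (Lemma~\ref{lem-duals}) transports this to the lower half.

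Concretely, here is the sequence of steps. First, suppose $J_q$ is relatively compressed with respect to $\fc$ in degree $m := \lceil s/2 \rceil$. Since $m \ge s/2$, Lemma~\ref{lem-surjective} applies and gives that $J_q$ is relatively compressed with respect to $\fc$ in every degree $m+i$ for $i \ge 0$; that is, in every degree $\ge \lceil s/2 \rceil$. Second, to cover the remaining degrees $i < \lceil s/2 \rceil$, I invoke the duality between the maps $\Phi_i \colon S_i \to D_{s-i}$ and $\Phi_{s-i}\colon S_{s-i}\to D_i$ recorded in Lemma~\ref{lem-duals}(1): $\Hom_R(\Phi_i,R) = \Phi_{s-i}$, and dual maps have the same rank, so $\rank \Phi_i = \rank \Phi_{s-i}$ and hence $H_i(P/J) = H_{s-i}(P/J)$ for all $i$ (this is exactly the computation in the proof of Lemma~\ref{TFAErelativelycompressed}, cf.\ the formulas in (\ref{ranks})). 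Now fix any degree $j < \lceil s/2 \rceil$. Then $s - j > s - \lceil s/2 \rceil = \lfloor s/2 \rfloor \ge s/2$ (and in all cases $s-j \ge \lceil s/2 \rceil$), so by the first step $J_q$ is relatively compressed in degree $s-j$, i.e. $H_{s-j}(P/J) = \min\{H_{s-j}(P/\fc), H_j(P/\fc)\}$. Combining with $H_j(P/J) = H_{s-j}(P/J)$ and the symmetry $\min\{H_{s-j}(P/\fc),H_j(P/\fc)\} = \min\{H_j(P/\fc),H_{s-j}(P/\fc)\}$, we conclude $H_j(P/J) = \min\{H_j(P/\fc), H_{s-j}(P/\fc)\}$, which is exactly the relatively compressed condition in degree $j$. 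Finally, since the condition now holds in every degree $i$ (both for $i \ge \lceil s/2\rceil$ from Step~1 and for $i < \lceil s/2 \rceil$ from Step~2), part (2) $\Rightarrow$ (1), or equivalently the definition together with Remark~\ref{rmk-TFAEsinglei}, of Lemma~\ref{TFAErelativelycompressed} gives that $J_q$ is relatively compressed with respect to $\fc$.

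There is essentially no hard step here — the corollary is a bookkeeping consequence of the preceding lemma. The one point that needs a little care is the boundary case when $s$ is odd versus even: one must check that starting at $\lceil s/2 \rceil$ and passing to duals $s - j$ genuinely sweeps out every degree in $\{1,\dots,s\}$, with the middle degree(s) covered. When $s$ is even, degree $s/2$ is its own dual and is covered by Step~1 directly; when $s$ is odd, the two middle degrees $(s-1)/2$ and $(s+1)/2$ are dual to each other, and $(s+1)/2 = \lceil s/2\rceil$ is covered by Step~1 while $(s-1)/2$ is then covered by the duality argument of Step~2. So the only ``obstacle'' is simply confirming one does not miss a degree at the midpoint, which the inequality $s - j \ge \lceil s/2\rceil$ for $j \le \lceil s/2 \rceil$ takes care of.
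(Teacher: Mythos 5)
Your proof is correct and takes the paper's intended route: Lemma~\ref{lem-surjective} gives relative compressedness in every degree $\ge \lceil s/2 \rceil$, and the duality recorded in Lemmas~\ref{lem-duals} and~\ref{TFAErelativelycompressed} transfers it to the remaining degrees --- in effect your Step~2 is just the implication (2)$\Rightarrow$(1) of Lemma~\ref{TFAErelativelycompressed}, so you could invoke that directly once Step~1 is done. One tiny arithmetic slip: the intermediate inequality $\lfloor s/2\rfloor\ge s/2$ is false for $s$ odd, but the bound you also record, $s-j\ge\lceil s/2\rceil$ whenever $j<\lceil s/2\rceil$, is correct and is the one the argument actually needs.
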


  \section{Resolutions of $\fc$-compressed ideals}
  \label{sec:relcomp}

In this section, given an ideal $J$ of $P=k[x_1,\dots, x_n]$ that is $\fc$-compressed for a complete intersection ideal $\fc$, our goal is to determine the form of the resolution of and the (degrees of the) minimal generators of $J$. We describe all possible degrees of the generators of an ideal that is not necessarily $\fc$-compressed; see Corollary~\ref{cor-degrees}. We do this using Macaulay's inverse systems.
We begin by giving an outline of the process that we use.

First we see immediately from the definition of $\fc$-compressed that 
in degree $n \leq \frac{s}{2}$ the only generators are the  generators of $\fc$.

Next, in order to obtain the generators in degrees $n\ge \frac{s}{2}$ we expand the methods in  \cite[Section 6]{MiRa-18}
to obtain a resolution of the truncated ideal $J_{\ge \frac{s}{2}}$, that is, the ideal $J_q$ in degrees greater than or equal to $\frac{s}{2}$. The beginning of the resolution turns out to be explicit enough to read off the degrees, and if $\deg(f)$ is even, the number of the generators in degrees $n\ge \frac{s}{2}$. 

Unlike in Section 6 of that paper, in this setting we are able to prove that the images of certain maps split out of the target. This enables us to simplify the resulting multi-complex at each step by taking kernels and cokernels. In order to explain this modified process, we discuss the construction more concretely. Subsequently, we show that some of these cokernels vanish, and hence we determine the degrees of the generators of $J$, the defining ideal of the algebra.

Before we begin constructing the resolution, we introduce the basic building blocks involved and determine that they are free.
\begin{dfn}
\label{dfn-LK}
In \cite{BuchEis-75} Buchsbaum and Eisenbud construct minimal free resolutions of powers of  ideals that are generated by regular sequences. 
We recall the special case of their construction for powers of the homogeneous maximal ideal $\mathfrak m$ of $P$. Here all tensor products are over $P$.

We begin by describing the free $P$-modules involved in their resolutions. They arise as kernels in strands of the general Koszul complex $K(X_1, \dots, X_n;S)$ on the symmetric algebra $S=P[X_1, \dots, X_n]$.  More precisely, the homogeneous strands look like
\[
\cdots \to 
{\Lambda}^{a+1}\otimes S_{b-1}
\xra{\kappa_{a+1,b-1}} 
{\Lambda}^{a}\otimes S_{b}
\xra{\kappa_{a,b}} 
{\Lambda}^{a-1}\otimes S_{b+1}
\to \cdots
\]
where $a+b=n$, $\Lambda$ denotes the exterior $P$-algebra $\Lambda F$ on the free $P$-module $F$ of rank $d$, 
and the differential is given by 
\[
\kappa_{a,b}
\left(
X_{i_1} \wedge\cdots\wedge X_{i_a} \otimes g
\right)
= 
\sum_{j=1}^a (-1)^j X_{i_1} \wedge\cdots\wedge \widehat{X}_{i_j}\wedge\dots\wedge X_{i_a}\otimes 
\left( 
X_{i_j} g
\right).
\]
The $P$-dual of this complex is isomorphic to the Koszul complex $K(X_1, \dots, X_n;D)$ for the variables on the $S$-module $D$. Its homogeneous strands are of the form
\[
\cdots \to 
{\Lambda}^{n-a+1}\otimes D_{b+1}
\xra{\eta_{n-a+1,b-1}} 
{\Lambda}^{n-a}\otimes D_{b}
\xra{\eta_{n-a,b}} 
{\Lambda}^{n-a-1}\otimes D_{b-1}
\to \cdots
\]

If  $a+b\ge 1$, these sequences are split exact 
and so all kernels (equivalently, images) are free $P$-modules; this follows from the fact that $X_1, \dots, X_n$ is a regular sequence on $S$ (see also \cite[Prop 2.2]{BuchEis-75}). 
With this in mind, define 
\begin{align*}
L_{a,b}&=\ker \kappa_{a,b}= \im \kappa_{a+1,b-1}
\subseteq \Lambda^a \otimes S_b 
\\
K_{a,b}&=\ker
\eta_{a,b}=\im \eta_{a+1,b+1}
\subseteq \Lambda^a \otimes D_b
\end{align*}
These are Schur and Weyl modules corresponding to hooks.
Define complexes
\begin{align*}
\mathbb L_m &\colon  0\to L_{n-1,m}\xra{\operatorname{kos}\otimes 1}
L_{n-2,m}\xra{\operatorname{kos}\otimes 1} \dots \to
L_{0,m} \xra{\varepsilon} P \to 0\\[2mm]
\mathbb K_m& \colon  0\to P \xra{\varepsilon^*}
K_{n-1,m} \xra{\operatorname{kos}\otimes 1} 
K_{n-2,m} \xra{\operatorname{kos}\otimes 1} \cdots \to
K_{0,m} \to 0
\end{align*}
where $\operatorname{kos}$ is the restriction of the differential on the Koszul complex 
\[
\Lambda^\bullet = K(x_1, \dots, x_n;P),
\]  
$\varepsilon$ is (induced by) the map
\[
\varepsilon \colon L_{0,m} = S = P[X_1, \dots, X_n] 
\to S(V) = k[x_1, \dots, x_n]=P
\]
that sends $X_i$ to $x_i$ for each $i$, and $\varepsilon^*$ is its dual.
\end{dfn}

\begin{ipg}\label{blk-be}
In \cite{BuchEis-75} Buchsbaum and Eisenbud show that  $\mathbb L_m$ is the minimal $P$-free resolution of $R/\mathfrak m^m$. Taking duals, one sees that 
\begin{equation}
\label{eqn-dual}
\Hom(\mathbb L_m,P) \cong \mathbb K_{m-1} 
\end{equation}
is the minimal $P$-free resolution of $\operatorname{Ext}^n_P(P/\mathfrak m^m,P)$.
\end{ipg}

Next we recall the new ingredients introduced in \cite{MiRa-18} that are needed for our resolutions of noncompressed algebras.

\begin{dfn} 
\label{dfn-K'}
Recall that
\[
D'_{s-m} = \im \, \left( S_m \xra{\Phi_m} D_{s-m} \right).
\]
One can verify that $\eta\circ(1\otimes \Phi)=(1\otimes \Phi)\circ\kappa$, and so 
the restrictions $\eta'$ of the maps $\eta$  give the subcomplex 
\begin{equation}
\label{def-eta'}
\cdots \xra{} 
{\Lambda}^i \otimes D_{s-m}' 
\xra{\eta'}
{\Lambda}^{i-1}\otimes D_{s-m-1}'
\xra{} \cdots 
\end{equation}  
This complex is no longer necessarily exact. We denote the cycles by 
\begin{equation}
\label{def-K'}
K_{i,s-m}' 
= \ker \left(
{\Lambda}^i \otimes D_{s-m}' 
\xra{\eta'} 
{\Lambda}^{i-1} \otimes D_{s-m-1}'
\right), 
\end{equation}
which is a free summand of $K_{i,s-m}$ by the proof of Proposition~\ref{prp-K'free}. 
We also denote the cokernels  by
\begin{equation}
\label{def-coker}
C_{i,s-m} 
=\coker \left( 
\Lambda^{i+1} \otimes D'_{s-m+1}
\xra{\eta'} 
K'_{i,s-m} 
\right).
\end{equation}
Notice that these cokernels are in fact the homologies of the subcomplex (\ref{def-eta'}), i.e. 
\begin{equation}
\label{C=homology}
C_{i,s-m} 
=\h_i \left(
\cdots 
\to 
\Lambda^{i+1} \otimes D'_{s-m+1}
\xra{\eta'} 
{\Lambda}^i \otimes D_{s-m}' 
\xra{\eta'}  
\cdots 
\right).
\end{equation}
Furthermore, they are always free, as we show in Proposition~\ref{prp-K'free}. 
One can verify that $\eta'\circ(\operatorname{kos} \otimes 1)=(\operatorname{kos} \otimes 1)\circ\eta'$, and so  
these modules fit together to make a subcomplex 
\[
\mathbb K_{s-m}' \colon  0\to P \xra{\pi\circ\varepsilon^*}
K_{n-1,s-m}' \xra{\operatorname{kos}\otimes 1} 
K_{n-2,s-m}' \xra{\operatorname{kos}\otimes 1} \cdots \to
K_{0,s-m}' \to 0
\]
of the complex $\mathbb K_{s-m}$, 
where $\pi\colon K_{n-1,m} \to K_{n-1,m}'$ is  any chosen projection to the summand. 

Next note that for every $i$ the commutative diagram on the right below induces the map on kernels indicated on the left 
\begin{equation*}
      \xymatrix{
     L_{i,m} \ar[d]^{1\otimes \Phi_{m} } \ar[r] & \Lambda^{i}\otimes S_m \ar[r]^\kappa \ar[d]^{1\otimes \Phi_{m}} & \Lambda^{i-1}\otimes S_{m+1} \ar[d]^{1\otimes \Phi_{m+1} }\\
     K'_{i,s-m} \ar[r] & \Lambda^{i}\otimes D'_{s-m} \ar[r]^{\eta'} & \Lambda^{i-1}\otimes D'_{s-m-1}  
    }
  \end{equation*}
which we also denote by $1\otimes \Phi_{m}$; unlike the other vertical maps, this induced map is not necessarily surjective. Note that the maps $1 \otimes \Phi$ and $\operatorname{kos} \otimes 1$ commute and hence the induced map 
\[
\mathbb L_m \xra{1 \otimes \Phi_m} 
\mathbb K_{s-m}' \subseteq 
\mathbb K_{s-m}
\]
is a chain map.
\end{dfn} 
In Section \ref{sec:socle} we show that in certain cases the modules $C_{i,s-m}$ are zero for $i=0,1$; see Proposition \ref{cokernelsvanish}. In general, one can see that they are all free modules.
\begin{prp}\label{prp-K'free}
The $P$-module $K_{i,s-m}'$ and $C_{i,s-m}$ defined above are  free for every $i$ and $m$. 
\end{prp}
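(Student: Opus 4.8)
The plan is to show freeness of $K_{i,s-m}'$ first and then deduce freeness of the cokernels $C_{i,s-m}$. The key observation is that everything in sight ($\Lambda^i\otimes S_m$, $\Lambda^i\otimes D_{s-m}$, the maps $\kappa$, $\eta$, and the induced map $1\otimes\Phi_m$) is obtained by base change from $k$ to $R$: the modules $S_m$, $D_{s-m}$ are free $R$-modules because $S=S(F)$ and $D=D(F^*)$ are obtained by applying $-\otimes_k R$ to $S(V)$ and $D(V^*)$, as noted in~\ref{pairings}. Since $D'_{s-m}=\im\Phi_m$ is a free direct summand of $D_{s-m}$ (Definition~\ref{dfn-D'}), the subcomplex~\eqref{def-eta'} consists of free $R$-modules. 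What is \emph{not} automatic is that its kernels (the $K_{i,s-m}'$) and homologies (the $C_{i,s-m}$) are free, since this complex need not be split exact.

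The main step is the following: the complex~\eqref{def-eta'} is a direct summand, as a complex of $R$-modules, of the split exact Koszul strand $\Lambda^\bullet\otimes D_{s-m}$ defining the $K_{i,s-m}$. Indeed, because $D'_{s-m}$ splits off $D_{s-m}$ compatibly with the $S$-action on the image of $\Phi$ (this is where I would use that $D'$ is defined as an image of a map of $S$-modules, so $\eta$ restricts to $\eta'$ and the complement is also a subcomplex), we get a direct sum decomposition of complexes $\Lambda^\bullet\otimes D_{s-\bullet}\cong (\Lambda^\bullet\otimes D'_{s-\bullet})\oplus(\text{complement})$. First I would verify carefully that the splitting of $D_{s-m}=D'_{s-m}\oplus D''_{s-m}$ can be chosen so that $\eta$ respects it in all degrees simultaneously — this is the technical heart and the step I expect to be the main obstacle, since a priori the splitting in each degree is arbitrary and one must patch them into a splitting of complexes; one route is to observe that $D'_{s-\bullet}$, being the image of the chain map $1\otimes\Phi\colon \Lambda^\bullet\otimes S_\bullet\to\Lambda^\bullet\otimes D_{s-\bullet}$, is a subcomplex, and that over $R$ any subcomplex of a split exact complex of free modules which is itself degreewise a free summand is in fact a summand \emph{as a complex} (using that the ambient complex is contractible, so one can transport a degreewise splitting along a contracting homotopy). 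Granting this, $K_{i,s-m}'=\ker\eta'$ is a direct summand of $K_{i,s-m}=\ker\eta$, and since $K_{i,s-m}$ is free (Definition~\ref{dfn-LK}, as $\Lambda^\bullet\otimes D_{s-\bullet}$ is split exact for $a+b\ge 1$), a direct summand of a free module over the local ring $R$ — wait, $P$ here is the polynomial ring, not local — a finitely generated projective module over the polynomial ring $P$, hence free by Quillen–Suslin, or more simply a direct summand of a free \emph{graded} module over the graded ring $P$, which is automatically free. I would phrase it with the graded argument to avoid invoking Quillen–Suslin.

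Finally, freeness of $C_{i,s-m}$: from the summand decomposition of complexes, $C_{i,s-m}=\h_i(\Lambda^\bullet\otimes D'_{s-\bullet})$ is a direct summand of $\h_i(\Lambda^\bullet\otimes D_{s-\bullet})$, but the latter is zero by split exactness — that would force $C_{i,s-m}=0$, which contradicts the sentence right before the proposition. So instead I would argue directly: $C_{i,s-m}=\coker(\Lambda^{i+1}\otimes D'_{s-m+1}\xra{\eta'} K'_{i,s-m})$, and I would show this cokernel is \emph{free} by exhibiting it as a further summand. Since $K'_{i,s-m}$ is a graded summand of the free module $K_{i,s-m}$, and the image of $\eta'$ is a graded submodule, it suffices to show $\im\eta'$ is itself a graded free summand of $K'_{i,s-m}$; equivalently, that the complex~\eqref{def-eta'} splits degreewise \emph{after} passing to these cycle modules. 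This again follows from the summand-of-split-exact-complex principle applied one homological degree at a time: the short exact sequence $0\to K'_{i+1,s-m+1}/(\text{next cycles})\to\Lambda^{i+1}\otimes D'_{s-m+1}/(\text{next cycles})\to\im\eta'\to 0$ is a sequence of graded $P$-modules whose outer terms, being summands of free modules, are graded free, and one deduces the sequence splits so $\im\eta'$ is free, hence $C_{i,s-m}$ is free. I would organize this as an induction on homological degree, peeling off one free summand at a time, exactly as in the compressed-case arguments of \cite{ElkhKus-14} and \cite{MiRa-18} that the paper says it is extending.
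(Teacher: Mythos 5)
Your central step is false, and the contradiction you ran into with $C_{i,s-m}$ is precisely the proof that it is false. A direct summand, \emph{as a complex}, of a contractible complex is itself contractible: writing a contracting homotopy in block form with respect to a decomposition into subcomplexes, the diagonal block contracts the summand. So if the subcomplex $\Lambda^\bullet\otimes D'_{s-\bullet}$ were a summand of the split exact strand $\Lambda^\bullet\otimes D_{s-\bullet}$ as complexes, every homology $C_{i,s-m}$ would vanish — exactly the absurdity you noticed. The correct conclusion is that your principle (``a subcomplex of a split exact complex of free modules that is degreewise a free summand is a summand as a complex'') is simply not true, and no transport along a contracting homotopy can rescue it; for instance, take the ambient complex $0\to P^2\xra{\mathrm{id}}P^2\to 0$ and the subcomplex $0\to Pe_1\to P^2\to 0$: each term is a free summand, yet the subcomplex has nonzero homology, so it is not a summand of complexes. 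Since your freeness of $K_{i,s-m}'$ rests entirely on ``granting this,'' that half of the proposal is unproven. The fallback you offer for $C_{i,s-m}$ is also circular: the first term of your short exact sequence is itself a homology module of the subcomplex (an instance of what is being proved), and in any case freeness of two terms of a short exact sequence does not force it to split — splitting requires the quotient to be projective, which is the goal.

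The paper's mechanism, which your proposal never uses, is a comparison with the symmetric side: for each $i$ there is a commutative square in which $\kappa\colon\Lambda^{i+1}\otimes S_{m-1}\onto L_{i,m}$ and $1\otimes\Phi_{m-1}\colon\Lambda^{i+1}\otimes S_{m-1}\onto\Lambda^{i+1}\otimes D'_{s-m+1}$ are surjective, so $\im{\eta'}$ inside $K_{i,s-m}'$ equals the image of the induced map $1\otimes\Phi_m\colon L_{i,m}\to K_{i,s-m}'$. That map is given, in bases defined over $k$, by a matrix whose entries are coefficients of $\varphi$, hence units or zero, so its image is a free direct summand of $K_{i,s-m}'$; the two short exact sequences $0\to K_{i,s-m}'\to\Lambda^i\otimes D'_{s-m}\to\im{\eta'}\to0$ and $0\to\im{\eta'}\to K_{i,s-m}'\to C_{i,s-m}\to0$ then yield freeness of both modules. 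Ironically, your opening observation is closer to a correct proof than the route you took: the differentials $\eta'$ themselves have scalar matrices in the monomial bases (the $S$-action takes a divided-power monomial to a divided-power monomial or zero, and $D'_{s-m}$ is spanned over $P$ by the elements $X^{\alpha}\varphi$), so the whole subcomplex is the base change to $P$ of a complex of finite-dimensional $k$-vector spaces, whose cycles and homologies are automatically free; but as written you explicitly decline to exploit this and instead lean on the invalid splitting principle.
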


\begin{proof} 
First we show that the image of each map $\eta'$ is a summand of $\Lambda^i \otimes D'_{s-m}$. For every $i$ we have a commutative diagram
      \begin{equation*}
      \xymatrix{
      \Lambda^{i+1}\otimes S_{m-1} \ar[r]^{ \ \ \ \ \ \kappa} \ar[d]^{1\otimes \Phi_{m-1}} & L_{i,m} \ar[d]^{1\otimes \Phi_{m} }\\
      \Lambda^{i+1}\otimes D'_{s-m-1} \ar[r]^{ \ \ \ \ \ \ \eta'} & \Lambda^{i} \otimes D'_{s-m} 
    }
  \end{equation*}
Since $\kappa$ and $1 \otimes \Phi_{m-1}$ are surjective, the maps $\eta'$ and $1 \otimes \Phi_{m}$ have the same image. Furthermore, we know that $1 \otimes \Phi_{m}$ is represented by a matrix whose entries come from the coefficients of $\Phi$, hence are units or equal to 0, and so its image is a summand of $\Lambda^i \otimes D'_{s-m}$. Therefore the image of $\eta'$ is also a summand of $\Lambda^i \otimes D'_{s-m}$ and so is free for all $i,m$. 

From the short exact sequences 
\[
0 \lra K_{i,s-m}' \lra \Lambda^i \otimes D_{s-m}' \xra{\eta'} \im \eta' \lra 0
\]
and 
\[
0 \lra \im \eta' \lra K_{i,s-m}' \lra C_{i,s-m}  \lra 0
\]
we now conclude that $K_{i,s-m}'$ and $C_{i,s-m}$ are free as well. 
\end{proof} 

With these ingredients, we now obtain a resolution of any standard graded Gorenstein algebra. In these resolutions the only nonexplicit modules and maps are coming from the cokernels $C$ defined above. 
The proof is modeled on that of the main result in \cite{MiRa-18}. 

We note that, when the algebra is compressed, these cokernels always vanish by the remarks in Definitions~\ref{dfn-LK} and \ref{dfn-K'} since in that case $D=D'$. 
When the algebra is not compressed they can be nonzero. However, further below, we show that under certain assumptions some of these cokernels will vanish.

We first recall the proposition from \cite{MiRa-18} that is used in the proof. 
For the explicit differentials in the resulting complex, see \cite[Prop 2.1]{MiRa-18}. 

\begin{prp}
\label{prp-min}
Let $R$ be a ring, and let $(X,\partial^X)$ and $(Y,\partial^Y)$ be complexes of $R$-modules, and consider a chain map $\phi\colon X \to Y$.
    For each $i$, let 
    \[
    A_i=\Ker \phi_i, ~
    B_i=\Im \phi_i, ~
    {\textrm{ and }}~
    C_i=\coker \phi_i
    \] 
    Suppose that, for every $i>0$, $A_i$ and $B_i$ are direct summands of $X_i$ and $Y_i$, respectively, which we display visually with choices of splitting maps as 
      \begin{equation*}
      \xymatrix{
      &A_3 \ar[d]^{}
      &A_2 \ar[d]^{}
      &A_1 \ar[d]^{} 
      & 
      & 
      &   \\
      \cdots  \ar[r]
      &X_3 \ar[r] \ar[d]\ar@/^1pc/[u]^{}
      &X_2 \ar[r]\ar[d]\ar@/^1pc/[u]^{}
      &X_1 \ar[r]\ar[d] \ar@/^1pc/[u]^{} 
      &X_0 \ar[r]\ar[d]
      &X_{-1} \ar[r]\ar[d]
      & \cdots  \\
      \cdots  \ar[r]
      &Y_3 \ar[r] \ar[d]^{}
      &Y_2 \ar[r] \ar[d]^{}
      &Y_1 \ar[r] \ar[d]^{} 
      &Y_0 \ar[r]
      &Y_{-1} \ar[r]
      & \cdots  \\
      &C_3 \ar@/^1pc/[u]^{}
      &C_2 \ar@/^1pc/[u]^{}
      &C_1 \ar@/^1pc/[u]^{}
      & 
      & 
      &   
    }
  \end{equation*}
Then the mapping cone of $\phi$ is homotopy equivalent to a complex of the form 
      \begin{equation*}      \xymatrixrowsep{0.2pc} 
      \xymatrix{
     \cdots \ar[r]&A_3 \ar[r]^{} 
      &A_2 \ar[r]^{} 
      &A_1 \ar[r]^{} 
      &X_0 \ar[r]^{} 
      &X_{-1} \ar[r]^{} 
      & \cdots  \\
 \cdots     
      & \oplus
      &\oplus
      &\oplus
      &\oplus
      &\oplus
      &         \\
      \cdots\ar[uur]^{}  \ar[r]^{} 
      &C_4 \ar[r]^{} \ar[uur]
      &C_3 \ar[r]^{} \ar[uur]
      &C_2 \ar[r]^{} \ar[uur]
      &C_1 \ar[r]^{} 
      &Y_0 \ar[r]^{} 
      & \cdots 
    }
  \end{equation*}
where the horizontal maps are those induced by the differentials $\partial^X$ and $\partial^Y$ and the diagonal maps are a composition of maps in the following order: the splitting from $C_{i+2}$ to $Y_{i+2}$, the differential $\partial^Y\colon Y_{i+2} \to Y_{i+1}$, the projection to the image $B_{i+1}$ of $\phi_{i+1}$, the splitting of $\phi_{i+1}$ from $B_{i+1}$ up to $X_{i+1}$, the differential $\partial^X\colon X_{i+1} \to X_{i}$, and for $i>0$ the projection to the summand $A_i$. 
\end{prp}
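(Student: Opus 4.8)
The plan is to build the promised small complex explicitly and exhibit an explicit homotopy equivalence with the mapping cone of $\phi$, rather than merely invoking an abstract ``splitting of mapping cones'' lemma. Recall that the mapping cone $\operatorname{Cone}(\phi)$ has $\operatorname{Cone}(\phi)_i = X_{i-1} \oplus Y_i$ with differential $\begin{pmatrix} -\partial^X & 0 \\ \phi & \partial^Y \end{pmatrix}$. The first step is to use the chosen splittings to write, for each $i>0$, $X_i = A_i \oplus A_i'$ and $Y_i = B_i \oplus B_i''$, where $A_i' \cong B_i$ via $\phi_i$ and $B_i'' \cong C_i$ via the splitting of $\coker\phi_i$. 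So in the relevant range (degrees $\ge 1$ on the $X$ side, $\ge 1$ on the $Y$ side), $\operatorname{Cone}(\phi)_i$ decomposes as $A_{i-1} \oplus A_{i-1}' \oplus B_i \oplus B_i''$, and under the isomorphisms $A_{i-1}' \cong B_{i-1}$, the pair $(A_{i-1}', B_{i-1})$ inside the cone forms, via the $\phi$ entry of the differential, an acyclic ``elementary'' summand $B_{i-1} \xrightarrow{\ \cong\ } B_{i-1}$ (up to sign), which can be cancelled.

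The second step is the cancellation itself: I would invoke the standard fact that if a complex $C$ contains, in consecutive degrees, a subquotient complex of the form $0 \to M \xrightarrow{\ \mathrm{id}\ } M \to 0$ as a direct summand at the level of the underlying graded module (with the identity appearing as a block of the differential), then $C$ is homotopy equivalent to the complex obtained by deleting those two copies of $M$ and correcting the neighboring differentials by the usual Gaussian-elimination formula $\partial' = \partial - \beta \delta^{-1} \gamma$, where $\delta$ is the isomorphism being cancelled and $\beta,\gamma$ are the adjacent off-diagonal blocks. Applying this simultaneously to all the acyclic pieces $A_{i-1}' \cong B_{i-1}$ for $i-1 \ge 1$ leaves exactly the graded module $\cdots \oplus (A_i \oplus C_{i+1}) \oplus \cdots$ in the positive range, glued to $X_0 \oplus Y_0$, $X_{-1} \oplus Y_{-1}$, etc.\ in the non-positive range, which is precisely the graded module of the claimed complex. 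The third step is to identify the corrected differentials. The horizontal maps $A_i \to A_{i-1}$ and $C_{i+1}\to C_i$ are unaffected (they are just the restrictions/projections of $\partial^X$ and $\partial^Y$), and the correction term $-\beta\delta^{-1}\gamma$ is exactly the stated diagonal composite: from $C_{i+2}$ one splits up into $Y_{i+2}$, applies $\partial^Y$, lands (after projecting off $B_{i+1}$-free part) in the cancelled copy $B_{i+1}$, transports via $\delta^{-1}$ up to $A_{i+1}' \subseteq X_{i+1}$, applies $\partial^X$, and projects onto $A_i$; the signs coming from the cone differential and from the Gaussian elimination combine into the ones implicit in the statement. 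One should also check the boundary cases $i=0,-1$ separately, where $X_i$ and $Y_i$ are not split and simply survive intact, and verify that the resulting diagonal maps into $X_0$ and the horizontal map $C_1 \to Y_0$ are as described.

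The main obstacle I anticipate is purely bookkeeping: getting the signs and the precise order of the six composed maps in the diagonal differential correct, and checking that after all the simultaneous cancellations the various correction terms do not interfere with one another (i.e.\ that cancelling $A_i'\cong B_i$ does not reintroduce a nonzero block between two copies being cancelled at an adjacent spot). This is handled by noting that $\phi$ is a chain map, so the only off-diagonal blocks of the cone differential touching the cancelled summands are the ones accounted for, and the corrections telescope cleanly degree by degree. I would organize this by first doing the cancellation abstractly for a single acyclic summand (a lemma on Gaussian elimination in complexes, which is classical and can be cited or proved in two lines via an explicit homotopy $h$ with $h\delta = \mathrm{id}$ on the cancelled piece), then applying it inductively, tracking that at each stage the hypotheses for the next cancellation still hold because the newly introduced correction maps land in the surviving summands $A_\bullet$ and $C_\bullet$, never back into an as-yet-uncancelled $A'_\bullet$ or $B_\bullet$.
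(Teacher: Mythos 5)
Your proposed argument is sound, but note that this paper does not actually prove Proposition~(\ref{prp:min}): it is recalled verbatim from the earlier work of Miller--Rahmati, with the reader referred to \cite[Prop 2.1]{MiRa-18} for the explicit differentials, so there is no in-paper proof to compare against. Your route --- decompose $X_i=A_i\oplus A_i'$ and $Y_i=B_i\oplus B_i''$ with $A_i'\cong B_i$ via $\phi_i$ and $B_i''\cong C_i$, locate the elementary acyclic blocks $A_{i}'\xra{\is}B_{i}$ inside the cone, and remove them by the standard Gaussian-elimination (cancellation) lemma, with the correction $-\beta\delta^{-1}\gamma$ producing exactly the six-fold composite $C_{i+2}\to A_i$ --- is the natural self-contained proof, and the identification of the corrected differential is right: the direct components give the horizontal maps, the would-be component $A_{i+1}\to C_{i+1}$ vanishes because $\phi$ kills $A_{i+1}$, and the single correction term is the stated diagonal map. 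Two bookkeeping caveats. First, your claim that the correction maps ``never land in an as-yet-uncancelled $A'_\bullet$ or $B_\bullet$'' is not literally true: cancelling $(A_{i+1}',B_{i+1})$ does create a component from $C_{i+2}$ into $A_i'$ if that pair has not yet been removed. This is harmless --- components into the $E$-summand (and out of the $G$-summand) of the next cancellation are simply discarded by the lemma and generate no further corrections --- or it can be avoided outright by cancelling in increasing homological degree; either way each boundary receives exactly one correction and nothing telescopes beyond the stated composite, but the justification should be phrased that way rather than by asserting the corrections avoid the cancelled summands. Second, since there are infinitely many cancellations you should observe they are locally finite (each degree is touched by only two of them), so the composite of the homotopy equivalences is well defined degreewise; and in the untouched low degrees the cone differential, including the surviving component $\phi_0\colon X_0\to Y_0$, is carried along unchanged, which is the correct reading of the boundary of the displayed complex.
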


In other words, for $i>0$ the $X_i$ have been replaced by the kernels $A_i$ and the $Y_i$ have been replaced by the cokernels $C_i$ at the cost of some induced snake-lemma-like maps $C_{i+2} \lra A_{i}$ (or $C_{2} \lra X_0$ if $i=0$).

\begin{thm}
\label{resn-general}
For any standard graded Artinian Gorenstein algebra $P/J$ and every integer $m\ge 1$, 
the mapping cone of the following chain map is a $P$-free resolution of the quotient $P/J_{\ge m}$ of $P$ by the truncated ideal $J_{\ge m}$.
\begin{equation}
    \label{resolution}
\begin{split} 
\xymatrixrowsep{0.6pc} \xymatrixcolsep{0.7pc}
    \xymatrix{  
        &0\ar[dd]& &\\  
    P\ar[dd]_{\tiny\begin{bmatrix} \pi \circ \varepsilon^* \\ *\end{bmatrix}}&& &\\    
     & L_{n-1,m} 
      \ar[dl]_{\hspace{3mm}(1\otimes \Phi_{m}, 0)}\ar[dd]^{\partial^{\mathbb L}}&&\\
    K'_{n-1,s-m}\oplus W_{n-1,s-m-1}\ar[dd]_{\tiny\begin{bmatrix} \partial^{\mathbb K} & * \\ 0 & \partial^{\mathbb K} \end{bmatrix}} 
      &\ar[d]&&\\
&{}^{\vdots}\ar[dd]^{\partial^{\mathbb L}}&&\\ 
{}^{\vdots}\ar[dd]_{\tiny\begin{bmatrix} \partial^{\mathbb K} & * \\ 0 & \partial^{\mathbb K} \end{bmatrix}}&&&\\ 
       & L_{2,m}\ar[dl]_{\hspace{3mm}(1\otimes \Phi_{m}, 0)}\ar[dd]^{\partial^{\mathbb L}}
      &&\\
      K'_{2,s-m} \oplus W_{2,s-m-1}\ar[dd]_{\tiny\begin{bmatrix} \partial^{\mathbb K} & * \\ 0 & \partial^{\mathbb K} \end{bmatrix}} 
      &&&\\
      &  L_{1,m} \ar[dl]_{\hspace{3mm}(1\otimes \Phi_{m}, 0)}\ar[dd]^{\partial^{\mathbb L}}
      &&\\    
     K'_{1,s-m}\oplus W_{1,s-m-1}\ar[dd]_{\tiny\begin{bmatrix} \partial^{\mathbb K} & * \\ 0 & \partial^{\mathbb K} \end{bmatrix}} 
      &&&\\    
      & L_{0,m}\ar[dd]^{\varepsilon}\ar[dl]_{\hspace{3mm}(1\otimes \Phi_{m}, 0)}
      &&\\   
     K'_{0,s-m}\oplus W_{0,s-m-1}
      &&&\\
      &P&&
    }
\end{split}
\end{equation}
where $W_{a,b}=\oplus_{j=0}^{b} C_{a,j}$ with  $C_{a,j}=\coker\left( \Lambda^{a+1} \otimes D_{j+1}' \xra{\eta'} K_{a,j}'\right)$, as defined in Definition \ref{dfn-K'}, and the vertical differentials along the complexes are induced by the differentials of $\mathbb L$ and $\mathbb K$ as shown 
with the exception of the map $*$. 
\end{thm}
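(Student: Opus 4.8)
The plan is to follow the construction of \cite[Section~6]{MiRa-18}, where a resolution of this type is produced for \emph{compressed} Gorenstein algebras, the new feature in the general case being that the subcomplexes $\mathbb K'_{s-m}\subseteq\mathbb K_{s-m}$ of Definition~\ref{dfn-K'} need no longer be exact. Their homologies are the free modules $C_{i,s-m}$ of Proposition~\ref{prp-K'free} and, iteratively, the lower cokernels $C_{i,j}$, and these must be carried along; this is precisely what produces the summands $W_{a,b}=\bigoplus_{j\le b}C_{a,j}$ and the off-diagonal entries $*$ in (\ref{resolution}). The fixed inputs are that $\mathbb L_m$ is the minimal $P$-free resolution of $P/\mathfrak m^m$ (Buchsbaum--Eisenbud, \ref{blk-be}), that all the modules $K'_{a,s-m}$, $C_{a,j}$, $W_{a,b}$ occurring are free by Proposition~\ref{prp-K'free}, and that $1\otimes\Phi_m$ is already a chain map by Definition~\ref{dfn-K'}.

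The heart of the argument is to show that the target column of the displayed chain map, with terms $K'_{a,s-m}\oplus W_{a,s-m-1}$, is a (generally non-minimal) $P$-free resolution of the module $(P/J)_{\ge m}=\mathfrak m^m/J_{\ge m}$, the ideal of $P/J$ generated in degrees $\ge m$, once the maps $*$ are specified. I would do this by arranging the modules $\Lambda^a\otimes D'_b$ (for $0\le a\le n$, $0\le b\le s-m$) into a bicomplex with the commuting differentials $\operatorname{kos}\otimes 1$ and $\eta'$: along the $\eta'$-strands the homologies are the $C_{a,b}$ by (\ref{C=homology}), while the $\operatorname{kos}\otimes 1$-cycles are the $K'_{a,b}$. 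One then splices the single honest row $\mathbb K'_{s-m}$ to the ``error'' complexes assembled from the $C_{\bullet,b}$ with $b<s-m$ by repeated application of Proposition~\ref{prp-min} --- this both defines the maps $*$ and realizes the total complex as the target $\widetilde{\mathbb K}$ --- and computes its homology by the spectral sequence of the bicomplex. That the answer is $(P/J)_{\ge m}$ is essentially forced, since by the inverse-system dictionary (see \ref{pairings} and \ref{dfn-D'}) one has $H_i(P/J)=\rank D'_{s-i}$, which identifies the Hilbert function, and the module structure is read off the maps $\Phi_i$.

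With $\mathbb L_m$ resolving $P/\mathfrak m^m$ and $\widetilde{\mathbb K}$ resolving $(P/J)_{\ge m}$, one checks by a diagram chase on $0$-th homology, using only $\Phi_m(p)=p\varphi$ and $J=\Ann_P\varphi$, that the chain map $1\otimes\Phi_m$ --- which, as drawn, sends the degree-$(i+1)$ term $L_{i,m}$ of $\mathbb L_m$ to the degree-$i$ term $K'_{i,s-m}\oplus W_{i,s-m-1}$ of $\widetilde{\mathbb K}$ --- represents the connecting homomorphism of the short exact sequence
\[
0\longrightarrow (P/J)_{\ge m}\longrightarrow P/J_{\ge m}\longrightarrow P/\mathfrak m^m\longrightarrow 0 .
\]
By the standard description of resolutions of extensions, its mapping cone is then a $P$-free resolution of the middle term $P/J_{\ge m}$, and that cone is exactly the complex in (\ref{resolution}); the long exact sequence of the cone shows that its only nonzero homology is $H_0=P/J_{\ge m}$, the induced map on $H_0$ being the inclusion $(P/J)_{\ge m}\hookrightarrow P/J_{\ge m}$.

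The main obstacle is the middle step: verifying that splicing in exactly the cokernels $C_{a,j}$ for $j\le s-m-1$, with the snake-type maps of Proposition~\ref{prp-min}, turns $\mathbb K'_{s-m}$ into a resolution of $(P/J)_{\ge m}$. This requires that at every stage of the splicing the relevant images and cokernels be genuine direct summands --- which, as in the proof of Proposition~\ref{prp-K'free}, follows because the maps $1\otimes\Phi$ in play are represented by matrices whose entries are units or zero --- so that Proposition~\ref{prp-min} legitimately applies, that the iteration terminates after finitely many steps (bounded by the socle degree $s$), and that the total correction is no larger than $W_{a,s-m-1}$. Keeping the maps $*$ coherent through the iteration is the bookkeeping-intensive part. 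Relative compressedness is not used here; it enters only in Section~\ref{sec:socle}, where it forces some of the $C_{i,s-m}$ to vanish.
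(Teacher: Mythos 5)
Your plan is a genuinely different organization of the argument, and in outline it can be made to work. The paper proves the statement by a descending induction on the truncation degree: the base case is $J_{\ge s+1}=\fm^{s+1}$ resolved by $\mathbb L_{s+1}$, and each step passes from $P/J_{\ge m+1}$ to $P/J_{\ge m}$ by taking the cone over a lift of $J_m\hookrightarrow P/J_{\ge m+1}$, with $J_m$ replaced (up to homotopy) by the cone of $E\otimes S_m\xra{1\otimes\Phi_m}E\otimes D'_{s-m}$ so that the lift is explicit, and then reducing the cube (\ref{cube1}) with Proposition~\ref{prp-min}; the maps $*$, the shape of the column, and the identification of $H_0$ all fall out of the induction. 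You instead fix $m$, use the single exact sequence $0\to\fm^m(P/J)\to P/J_{\ge m}\to P/\fm^m\to 0$, and handle the entire $D'$-side at once. The intermediate claim you need --- that the column with terms $K'_{a,s-m}\oplus W_{a,s-m-1}$ and top term $P$ resolves $\fm^m(P/J)$ --- is in fact correct, and your double-complex idea is the right mechanism: after regrading (both $\eta'$ and $\operatorname{kos}\otimes 1$ lower the exterior degree, so they must be given complementary bidegrees) the modules $\Lambda^a\otimes D'_b$, $0\le b\le s-m$, form a bounded double complex; the Koszul (column) filtration shows its totalization is acyclic in positive degrees with $H_0$ of length $\sum_{b\le s-m}\rank D'_b$, while the $\eta'$-strand (row) homologies are exactly $K'_{a,s-m}$, the $C_{a,b}$ with $b\le s-m-1$, and the isolated corner $\Lambda^n\otimes D'_0\cong P$, and the splitness needed to transfer the differentials (thereby defining $*$) is supplied at every level by the argument of Proposition~\ref{prp-K'free}. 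What each approach buys: yours isolates the conceptual content (the column resolves $\fm^m(P/J)$, and $1\otimes\Phi_m$ encodes the extension class of $P/J_{\ge m}$), while the paper's induction trades that insight for bookkeeping that automatically produces the unspecified maps and the augmentations at every stage.

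Two places in your sketch, however, are not yet arguments and are exactly where the content lies. First, the identification of $H_0$ with $\fm^m(P/J)$ is not ``forced'' by the Hilbert function: equal Hilbert functions do not determine a module, and the spectral sequence only bounds the length. You must exhibit the augmentation $\bigoplus_b\Lambda^0\otimes D'_b\to(P/J)_{\ge m}$, $g\varphi\mapsto\varepsilon(g)+J$, check it is well defined using $\varepsilon(\ker\Phi)=J$ as in (\ref{pairings}) and Lemma~\ref{phi}, check it kills boundaries (the Koszul component $x_i\,(g\varphi)$ and the $\eta'$ component $(X_ig)\varphi$ both map to $x_i\varepsilon(g)$, and on the cycles $K'_{1,s-m}$ the cycle condition puts the image in $J$), and only then conclude by the length count; the same explicit augmentation is what pins down, in your final step, that the cone resolves $P/J_{\ge m}$ rather than some other extension of $P/\fm^m$ by $\fm^m(P/J)$ (e.g.\ the split one), so the ``diagram chase on $H_0$'' must be carried out, not just asserted. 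Second, two slips in the bookkeeping: the $K'_{a,b}$ are the $\eta'$-cycles, not the $\operatorname{kos}\otimes 1$-cycles; and when you pass to strand homology you must retain the corner term $\Lambda^n\otimes D'_0\cong P$ (the strand homologies $K'_{n,b}$ and $C_{n,b}$ vanish for $b\ge 1$), since this is precisely the top $P$ of the displayed column --- dropping it would leave a length-$(n-1)$ free complex, which cannot resolve the depth-zero module $\fm^m(P/J)$.
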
 

\begin{proof} 
We use a (descending) inductive process to go from a resolution of $P/J_{\ge m+1}$ to a resolution of $P/J_{\ge m}$ as follows. Consider the short exact sequence
  \begin{equation*}
    0 \to J_{\ge m}/J_{\ge m+1} \to P/J_{\ge m+1} \to P/J_{\ge m}
    \to 0
  \end{equation*}
and let $F$ be a graded free resolution of $P/J_{\ge m+1}$. Then $J_{\ge m}/J_{\ge m+1}$ is the graded component $J_{m}$, which is a $k$-vector space, say of dimension $t$, and hence resolved by $t$ copies $E^t$ of the Koszul complex $E=K(x_1,\dots , x_n;P)$ on the variables. 
So for any lift 
\[
f\colon E^t \to F
\] 
of the inclusion $J_{m} \cong J_{\ge m}/J_{\ge m+1} \hookrightarrow P/J_{\ge m+1}$, the mapping cone of $f$ is a free resolution of $P/J_{\ge m}$. 
In order to get an explicit lift, we will use, in place of $E^t$, a mapping cone of Koszul complexes that is quasi-isomorphic to $E^t$, namely $E \otimes S_m \xra{1\otimes \Phi_m} E \otimes D_{s-m}'$: 
indeed since $D_{s-m}'$ is a free $P$-module, the surjection $S_m \xra{\Phi_m} D_{s-m}'$ is split and so the map $E \otimes S_m \xra{1\otimes \Phi_m} E \otimes D_{s-m}'$ is also a split surjection. Hence it is homotopy equivalent (in particular, quasi-isomorphic) to its kernel, which is the tensor product of $E$ with the kernel of $S_m \xra{\Phi_m} D_{s-m}’$, which is a free $P$-module of rank $t$ since its basis maps bijectively via $\varepsilon$ to the minimal generators of $J_m$ (see (\ref{pairings})). 

For the base of the inductive process we begin with a resolution of $P/J_{\ge s+1}$. 
Note that in degrees $s+1$ and higher, the ideal is equal to a power of the homogeneous maximal ideal, more precisely, $J_{\ge s+1} =P_{\ge s+1}=\fm^{s+1}$, since 
\[
J_{s+1} = \varepsilon \left( 
\ker 
(S_{s+1} \xra{\Phi_{s+1}} D_{-1}')
\right)
= \varepsilon 
\left( 
S_{s+1}
\right)
=P_{s+1}
\]
Hence a free resolution of $P/J_{\ge s+1}$ is given by the Buchsbaum-Eisenbud complex $\mathbb L_{s+1}$; see Definition~\ref{dfn-LK}. 
To proceed with the induction, we can think of the base case resolution $\mathbb L_{s+1}$ as the mapping cone of $1 \otimes \Phi_{s+1} \colon \mathbb L_{s+1} \to \mathbb K_{-1}'$ since $\mathbb K_{-1}'=0$. We note that when performing inductive steps the very first step from the base case, namely $P/J_{\geq s+1}$ to $P/J_{\geq s}$, has a slightly different explanation since $\mathbb K_{-1}'=0$. We will explain it separately at the end. 

Assume, by induction, that the resolution of $P/J_{\ge m+1}$ is as given in the statement and form the following diagram (\ref{cube1}), in which the mapping cone of the right wall is the aforementioned resolution of $P/J_{\ge m+1}$ and the mapping cone of the left wall is a free resolution of $J_m=J_{\ge m}/J_{\ge m+1}$, since it is homotopy equivalent to  $E^t$, where $t=\dim_k J_{m}$ as explained further above.



\begin{equation}
    \label{cube1}
\begin{split} 
\xymatrixrowsep{0.6pc} \xymatrixcolsep{0.7pc}
    \xymatrix{   
      && P\ar[dd]&\\
      & \Lambda^n \otimes S_{m} \ar[rr]^{\hspace{-15mm}\kappa}
      \ar[dl]_{1\otimes \Phi_{m}}\ar[d]&&L_{{n-1},m+1}\ar[dl]_{1\otimes \Phi_{m+1}}\ar[d]\\
     \Lambda^n\otimes D_{s-m}'\ar[rr]^{\hspace{-1mm}(\eta',0)}\ar[d] 
      &{}^{\vdots}\ar[d]&K_{{n-1},{s-m-1}}'\oplus W_{n-1,s-m-2}\ar[d]&{}^{\vdots}\ar[d]\\
      {}^{\vdots}\ar[d] & \Lambda^2 \otimes S_{m} \ar[rr]^{\hspace{-15mm}\kappa}\ar[dl]_{1\otimes \Phi_{m}}\ar[dd]
      &{}^{\vdots}\ar[d]&L_{{1},m+1}\ar[dl]_{1\otimes \Phi{m+1}}\ar[dd]\\
      \Lambda^2\otimes D_{s-m}'\ar[rr]^{\hspace{-1mm}(\eta',0)}\ar[dd]
      &&K_{{1},{s-m-1}}'\oplus W_{1,s-m-2}\ar[dd]&\\
      & \Lambda^1 \otimes S_{m} \ar[rr]^{\hspace{-15mm}\kappa}\ar[dl]_{1\otimes \Phi_{m}}\ar[dd]
      &&L_{{0},m+1}\ar[dl]_{1\otimes \Phi_{m+1}}\ar[dd]\\    
      \Lambda^1\otimes D_{s-m}'\ar[rr]^{\hspace{-1mm}(\eta',0)}\ar[dd]
      &&K_{{0},{s-m-1}}'\oplus W_{0,s-m-2}\ar[dd]&\\    
      & \Lambda^0 \otimes S_{m} \ar[rr]^{\hspace{-15mm}\varepsilon}\ar[dl]_{1\otimes \Phi_{m}}
      &&P\ar[dl]\\   
     \Lambda^0\otimes D_{{s-m}}'\ar[rr]
      &&0&
    }
\end{split}
   \end{equation}
Here $W_{a,b}=\oplus_{j=0}^{b} C_{a,j}$ and  $C_{a,j}=\coker( \Lambda^{a+1} \otimes D_{j+1}' \xra{\eta'} K_{a,j}')$.
One can easily check that the induced map on the 0th homologies agrees
  with the inclusion of $J_{\ge m}/J_{\ge m+1}$ into
  $P/J_{\ge m+1}$. Therefore the total
  complex of the entire diagram is a resolution of
  $P/J_{\ge m}$. 

The final step of the inductive process is to simplify this diagram. 
Indeed, Proposition~\ref{prp-min} enables us to simplify the front and back walls of diagram (\ref{cube1}). Assume first that $m>s$ so that the front right hand column containing $\mathbb K'$ is not zero. 
Along the back wall, the maps $\kappa$ are surjective and have kernels equal to $L_{a,m}$. These maps are clearly split as each $L$ is free. 
Along the front wall, the kernels of the horizontal maps $\eta'$ are $K_{a,s-m}'$ and the cokernels are equal to 
\[
\left( \frac{K_{a,s-m-1}'}{\im(\eta')} \right) \oplus W_{a,s-m-2} 
=
C_{a,s-m-1} \oplus W_{a,s-m-2}  
=
W_{a,s-m-1}.
\]
Note further that $K_{n,j}'$, $L_{n,j}$, and $C_{n,j}$ vanish for all $j$. 
The horizontal maps are split by Proposition~\ref{prp-K'free} (although the map $\varepsilon$ situated in homological degree 0 is not split). 
Putting these together,  Proposition~\ref{prp-min} yields that the totalization of diagram (\ref{cube1}) is homotopy equivalent to the totalization of the following diagram, where one can calculate that the map from $P$ to $K'_{n-1,s-m}$ is  the map $\pi \circ \varepsilon^*$ in the complex $\mathbb K_{s-m}$.  
\begin{equation}
    \label{cube2}
\begin{split} 
\xymatrixrowsep{0.6pc} \xymatrixcolsep{0.7pc}
    \xymatrix{  
        &0\ar[dd]& &\\  
    P\ar[dd]_{\tiny\begin{bmatrix} \pi \circ \varepsilon^* \\ *\end{bmatrix}}&& &\\    
     & L_{n-1,m} 
      \ar[dl]_{\hspace{3mm}(1\otimes \Phi_{m}, 0)}\ar[dd]^{\partial^{\mathbb L}}&&\\
    K'_{n-1,s-m}\oplus W_{n-1,s-m-1}
  \ar[dd]_{\tiny\begin{bmatrix} \partial^{\mathbb K} & * \\ 0 & \partial^{\mathbb K} \end{bmatrix}} 
      &\ar[d]&&\\
&{}^{\vdots}\ar[dd]^{\partial^{\mathbb L}}&&\\ 
{}^{\vdots}\ar[dd]_{\tiny\begin{bmatrix} \partial^{\mathbb K} & * \\ 0 & \partial^{\mathbb K} \end{bmatrix}}&&&\\ 
       & L_{2,m}\ar[dl]_{\hspace{3mm}(1\otimes \Phi_{m}, 0)}\ar[dd]^ {\partial^{\mathbb L}}
      &&\\
      K'_{2,s-m} \oplus W_{2,s-m-1}
      \ar[dd]_{\tiny\begin{bmatrix} \partial^{\mathbb K} & * \\ 0 & \partial^{\mathbb K} \end{bmatrix}} 
      &&&\\
      &  L_{1,m} \ar[dl]_{\hspace{3mm}(1\otimes \Phi_{m}, 0)}\ar[dd]^ {\partial^{\mathbb L}}
      &&\\    
     K'_{1,s-m}\oplus W_{1,s-m-1}
     \ar[dd]_{\tiny\begin{bmatrix} \partial^{\mathbb K} & * \\ 0 & \partial^{\mathbb K} \end{bmatrix}} 
      &&&\\    
      & L_{0,m}\ar[dd]^{\varepsilon}\ar[dl]_{\hspace{3mm}(1\otimes \Phi_{m}, 0)}
      &&\\   
     K'_{0,s-m}\oplus W_{0,s-m-1}
      &&&\\
      &P&&
    }
\end{split}
   \end{equation}
In the case that $m=s$ (the first inductive step from the base case $m=s+1$), the front right hand column of (\ref{cube1}) is zero, and we can simplify the diagram the same way as before along the back wall (the maps $\kappa$ are surjective and have kernels equal to $L_{a,m}$). 
Along the front wall the kernels are simply equal to the left hand front column $\Lambda^i\otimes D'_0$, which is in fact the complex $\mathbb K'_{0}$. (Note that $\Lambda^n\otimes D'_0$ has rank 1 and therefore can be identified with $P$ and also that $W_{i,s-m-1}=W_{i,-1}=0$.) 
This completes the inductive step and hence the proof. 
\end{proof}

To conclude this section we give the practical consequences of Theorem~\ref{resn-general}. 
In particular, we analyze the degrees of the maps in the resolution and then discuss what can be deduced about the {\it minimal} resolution. 
This information will be applied as the main tool in the proof of Corollary~\ref{generatingdegreesofJ}.

\begin{rmk}
\label{rmk-resn-minimal}
For convenience, we rewrite the mapping cone from Theorem~\ref{resn-general} horizontally as follows, where we have set $t=s-m$,   
\begin{align*}
\xymatrixcolsep{1.3pc}
    \xymatrix{
      &L_{n-1,m} \ar[r] \ar[d]^{
     (1 \otimes \Phi,0)}
      &\cdots \ar[r]
      &L_{1,m} \ar[r]\ar[d]^{(1 \otimes \Phi,0)}
      &L_{0,m} \ar[r]^{\varepsilon}\ar[d]^{(1 \otimes \Phi,0)}
      &P\\
P\ar[r]^{\varepsilon^* \hspace{12mm} \ }
&K_{n-1,t}'\oplus W_{n-1,t-1}\ar[r] 
&\cdots \ar[r]
&K_{1,t}'\oplus W_{1,t-1} \ar[r]
&K_{0,t}'\oplus W_{0,t-1} 
&
  }
  \end{align*}
and note that in fact one always has $W_{0,t-1}=0$ by Proposition~\ref{cokernelsvanish}. 

First, let us understand the maps 
\[
\begin{bmatrix} \partial^{\mathbb K} & * \\ 0 & \partial^{\mathbb K} \end{bmatrix}
\]
in the second row of the diagram above better.  For each $i$, recall that $W_{i,t-1}=\oplus_{j=0}^{t-1} C_{i,j}$ and that by Proposition~\ref{prp-K'free} each free module $C_{i,j}$ splits out of $K'_{i,j}$ which splits out of $\Lambda^i\otimes D_j$. 
So, $C_{i,j}$ and $K'_{i,j}$ have bases consisting of elements that are $k$-linear combinations of elements of the form $e \otimes g$ where $e=e_{k_1} \wedge \cdots \wedge e_{k_i} $ is an exterior monomial of degree $i$ and $g$ is a divided power monomial of degree $j$. 
The maps $\partial^{\mathbb K} \colon K_{i,t}' \to K_{i-1,t}'$ and $\partial^{\mathbb K} \colon W_{i,t-1} \to W_{i-1,t-1}$ take each such basis element to a sum of elements of the form $r e'\otimes g'$ where $r\in \fm\backslash\fm^2$ (i.e., $r$ is homogeneous of degree $1$), $e'$ is an exterior monomial of degree $i-1$, and $g'$ is a divided power monomial of degree $t$. That is, these two maps are linear.

The map $*\colon W_{i,t-1} \to K'_{i-1,t}$ is more complicated. 
If one follows carefully each application of Proposition~\ref{prp-min} in the proof of Theorem~\ref{resn-general}, one sees that the map $*$ is a composition of the induced maps from cokernels to kernels, that is, the diagonal maps in the conclusion of the proposition. 
Recall that these maps are a composition of maps (ignoring the various projections that don't change the degrees of the coefficients) starting with the  differential on $K'_{i,t}\oplus W_{i,t-1}$ (which we can assume we understand by induction on $t$) followed by the splitting of $(\eta',0)$, and last by the map $\operatorname{kos}\otimes 1$. 
So a basis element of each summand $C_{i,j}$ for $j=0, \dots, t-1$ maps under $*$ to an element of $K'_{i-1,t}$ that is a sum of elements of the form $r e'\otimes g'$ where $r\in \fm^{t-j+1} \backslash\fm^{t-j+2}$ (i.e., $r$ is homogeneous of degree $t-j+1$), $e'$ is an exterior monomial of degree $i-1$, and $g'$ is a divided power monomial of degree $t$. 
Altogether, taking $j=0,\dots,t-1$, we see that the basis of $W_{1,t-1}$ maps to sums of elements of the form $r e' \otimes g'$ where 
\begin{itemize}
\item 
$r\in \fm^{j'} \backslash\fm^{j'+1}$ with $2 \leq j' \leq t+1$ (i.e., $r$ is homogeneous of degree $j'$), 
\item 
$e'$ is an exterior monomial of degree $i-1$, and 
\item 
$g'$ is a divided power monomial of degree $t$. 
\end{itemize} 

Next, we apply Proposition~\ref{prp-min} once again, this time to reduce the resolution in the diagram above to a minimal one. 
Indeed, since the coefficients of $\Phi$ are units, one can show that the vertical maps $(1 \otimes \Phi,0)$ are completely split (the kernels and images the maps $(1 \otimes \Phi,0)$ split out of their  domains and codomains, respectively). 
Although reducing the entire resolution in general is too complicated without more knowledge of these kernels and cokernels, we can say something about the degrees of the maps that result.  

In particular, we analyze the beginning of the resolution in order to determine the degrees of generators of $J_{\geq m}$. 
Looking at the maps resulting from the proposition and considering the discussion above, one arrives at the following conclusion. 
Choose a splitting $\sigma$ of the surjective map 
$L_{0,m} \xra{1 \otimes \Phi} K_{0,t}'$. 

\begin{cor}
\label{cor-degrees}
For any standard graded Gorenstein algebra $P/J$, 
a homogeneous generating set of $J_{\geq m}$ is given by a union of the following sets where $t=s-m$:  
\begin{itemize}
\item 
the image under the map  $\varepsilon$ of a basis of \[
\ker (1 \otimes \Phi \colon L_{0,m} \to K_{0,t}'),
\] 
\item 
the image under the composition $\varepsilon \circ \sigma \circ \partial^{\mathbb K}$ of (a lift to $K_{1,t}'$ of) a basis of 
\[
\coker (1 \otimes \Phi \colon L_{1,m} \to K_{1,t}'), 
\]
\item 
and the image under the composition $\varepsilon \circ \sigma \circ *$ of a basis of $W_{1,t-1}$.

\end{itemize} 

Their respective degrees are
\begin{itemize}
\item $m$
\item $m+1$
\item $m+j'$ with $2 \leq j' \leq t+1$
\end{itemize}
\end{cor}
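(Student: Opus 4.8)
The plan is to read off the conclusion directly from the structure of the mapping cone in Theorem~\ref{resn-general}, as rewritten horizontally in Remark~\ref{rmk-resn-minimal}, by combining two applications of Proposition~\ref{prp-min}. First I would recall that the mapping cone displayed in Remark~\ref{rmk-resn-minimal} is an honest (not necessarily minimal) free resolution of $P/J_{\ge m}$, so that a generating set of $J_{\ge m}$ is obtained from the image in $P$ of the composite of the first differential of this resolution with $\varepsilon$. The first module of the resolution is $L_{0,m} \oplus \bigl(K'_{0,t} \oplus W_{0,t-1}\bigr)$, and by Proposition~\ref{cokernelsvanish} (cited in the remark) $W_{0,t-1}=0$, so in fact it is $L_{0,m}\oplus K'_{0,t}$, mapping to $P \oplus 0 = P$.

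The key step is to make the vertical ``comparison'' map $(1\otimes\Phi,0)\colon L_{0,m}\to K'_{0,t}$ split, which it is because the coefficients of $\Phi$ are units (this is exactly the observation used repeatedly, e.g.\ in Proposition~\ref{prp-K'free} and in the last paragraph of Remark~\ref{rmk-resn-minimal}). Applying Proposition~\ref{prp-min} to the chain map in Remark~\ref{rmk-resn-minimal}, the module $L_{0,m}$ in homological degree one splits as $\ker(1\otimes\Phi)\oplus (\text{image part})$, and the $K'$-row contributes its cokernel $\coker(1\otimes\Phi\colon L_{1,m}\to K'_{1,t})$ together with the induced diagonal snake maps. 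Concretely the first syzygy module of the reduced resolution has three kinds of basis elements: those coming from $\ker(1\otimes\Phi\colon L_{0,m}\to K'_{0,t})$, which map into $P$ via $\varepsilon$; those coming from $\coker(1\otimes\Phi\colon L_{1,m}\to K'_{1,t})$, which map via the diagonal map, i.e.\ lift to $K'_{1,t}$, apply $\partial_{\mathbb K}$, split back via $\sigma$ to $L_{0,m}$, then apply $\varepsilon$; and those coming from $W_{1,t-1}$, which map via $\varepsilon\circ\sigma\circ *$. This gives the three listed generating sets.

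It then remains to determine the internal degrees of these three families of generators. The module $L_{0,m}\subseteq \Lambda^0\otimes S_m = S_m$ is generated in degree $m$ (the monomials of degree $m$ in the $X_i$, which $\varepsilon$ sends to degree-$m$ monomials in $P$), so the first family sits in degree $m$. For the second family, $\partial_{\mathbb K}$ has degree $1$ (it is $\operatorname{kos}\otimes 1$, with coefficients homogeneous of degree $1$, as noted in Remark~\ref{rmk-resn-minimal}), while $\sigma$ and $\varepsilon$ preserve the relevant grading, so these generators have degree $m+1$. For the third family, the analysis of the map $*$ already carried out in Remark~\ref{rmk-resn-minimal} shows that a basis element of the summand $C_{1,j}$ of $W_{1,t-1}$ maps to an element with coefficient $r\in\fm^{t-j+1}\setminus\fm^{t-j+2}$, i.e.\ of degree $t-j+1$; setting $j'=t-j+1$ and letting $j$ range over $0,\dots,t-1$ gives $j'$ ranging over $2,\dots,t+1$, so these generators have degree $m+j'$ with $2\le j'\le t+1$. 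Assembling, a homogeneous generating set of $J_{\ge m}$ is the union of the three sets, in degrees $m$, $m+1$, and $m+j'$ ($2\le j'\le t+1$) respectively.

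The main obstacle is bookkeeping rather than any genuine difficulty: one must be careful that the reduction via Proposition~\ref{prp-min} is being applied only to the top chain map (to make $(1\otimes\Phi,0)$ split) and not re-deriving the whole resolution, and one must track through the composite defining the diagonal maps that the coefficient degrees come out as claimed — but all of this degree bookkeeping for $*$ has essentially been done already in Remark~\ref{rmk-resn-minimal}, so the corollary is a matter of collecting those observations together with the trivial degree counts for $\varepsilon$ and $\partial_{\mathbb K}$. One should also note explicitly, as the remark does, that $W_{0,t-1}=0$ so that no spurious degree-$m$ generators appear from the $K'$-row in homological degree zero.
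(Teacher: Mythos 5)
Your proposal is correct and takes essentially the same route as the paper: Corollary~\ref{cor-degrees} is obtained there precisely as you do, by taking the horizontal rewriting of the mapping cone of Theorem~\ref{resn-general} in Remark~\ref{rmk-resn-minimal}, noting $W_{0,t-1}=0$ via Proposition~\ref{cokernelsvanish}, splitting the vertical chain map $(1\otimes\Phi,0)$ (its coefficients being units) and applying Proposition~\ref{prp-min} once more, and then reading off the three families of generators with the same degree bookkeeping for $\varepsilon$, $\partial_{\mathbb K}$, and the map $*$ (with $j'=t-j+1$ for $j=0,\dots,t-1$). Nothing further is needed.
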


In our main application to Frobenius powers in the next section, the generators of the ideal $J$ in degrees less than or equal to $\frac{s}{2}$ are known and to obtain the generators in higher degrees we prove that $W_{1,t-1}=0$ for $m=\ceil{\frac{s}{2}}$ (see Corollary~\ref{generatingdegreesofJ}) so that we obtain all the generators of $J_{\geq m}$ from Corollary~\ref{cor-degrees}. 
\end{rmk} 

  \section{ Key technical results and consequence for socles}
  \label{sec:socle}

Let $k$ be a field of positive characteristic $p$ and let $P=k[x_1, \dots, x_n]$ be the polynomial ring with homogeneous maximal ideal
$({\bf x})=(x_1,\ldots,x_n)$. 
For any nonzero homogeneous polynomial $f\in P$ of degree $d$, consider 
the hypersurface ring
\[
R=P/(f)
\] 
with homogeneous maximal ideal $\fm=({\bf x})$, where we use the same notation for the variables and their images in $R$. 
We use the notation $q=p^e$ whenever it does not cause confusion. 

Our goal in this paper is to study the minimal free resolutions of the ideals
\[
\fm^{[q]}=({\bf x}^q)=(x_1^{q}, \dots, x_n^{q}) 
\subseteq R
\]
 for a large class of polynomials $f$ in $n=3$ variables, which we do in Section \ref{sec:frobenius}. 
In this section we prove the key technical result, Proposition~\ref{numberofgeneratorsofJ}, needed for that. 
Here, we work with any number of variables, but the strong stability results we find in the  three variable case are what drive the phenomena observed in Section~\ref{sec:frobenius}. 
The class of polynomials $f$ that we consider consists of those that are \qlinkcomp, that is, those whose links  $J=((x_1^q,\ldots,x_n^q):f)$ are $(x_1^q,\ldots,x_n^q)$-compressed; see Definition \ref{def-qlinkcomp}. 

As a corollary of Proposition~\ref{numberofgeneratorsofJ}, we get the following result about the dimension of the socle of $R/\fm^{[q]}$, 
which follows in view of Remark~\ref{muofJ}.
Note that in the statement 
\begin{itemize}
    \item $s$ is even when $p$ is odd and $d$ is even (or $p=2$ and $n-d$ is even),  
    \item $s$ is odd when $p$ is odd and $d$ is odd (or $p=2$ and $n-d$ is odd).
\end{itemize}

In view of Remark~\ref{rmk-general}, the result below holds for general choices of $f$ of degree $d$.

\begin{thm}
\label{soc-thm}
Let $R=k[x_1,\dots,x_n]/(f)$ be a standard graded hypersurface ring over a field $k$ with homogeneous maximal ideal $\fm$. 
Set 
\[
q=p^e
\qquad 
d= \deg f 
\ \ {\text{ and }} \ \  s=n(q-1)-d.
\]
Assume that $n\geq 3$ and $q \geq (n+d)/(n-2)$. 
Assume further that $f$ is \linkcomp{$q$}. 
Then the following hold for the socle module
$\soc(R/\fm^{[q]})$.  
\begin{itemize}
\item[(1)] 
Its generators lie in degrees
\[
s_i=\frac{1}{2}(n(q-1)+d-i)
\qquad 
i \in
\begin{cases}
\{2\}, 
& {\textrm{ if $s$ is even }} \\
\{1, 3\}, 
& {\textrm{ if $s$ is odd.}} 
\end{cases}
\]

\item[(2)] 
If $n=3$, its dimension satisfies 
\begin{align*}
&\qquad {\textrm{ if $s$ is even, }} \dim_k \soc \left(R/\fm^{[q]}\right)_{s_2}  \!\!\!= 2d 
\\
&\qquad {\textrm{ if $s$ is odd, }}
\dim_k \soc \left(R/\fm^{[q]}\right)_{s_1} \!\!\!= d
{\textrm{ \ and \ }}
\dim_k \soc \left(R/\fm^{[q]}\right)_{s_3}\!\!\!\leq 3d. 
\end{align*}

\item[(3)] 
If $n \geq 4$, its dimension is equal to a polynomial in $q$ with leading term
\[
\frac{2}{(n-2)!} \left[ \left(\frac{n}{2}\right)^{n-2}\!\!\!\!-n\left(\frac{n-2}{2}\right)^{n-2} \right] q^{n-2} 
\qquad 
{\textrm{ for $s$ even, }}
\]
and bounded by a polynomial in $q$ of degree $n-2$ for $s$ odd. 
Furthermore, the coefficients of these polynomials depend only on $n$ and $d$. 
\end{itemize}
\end{thm}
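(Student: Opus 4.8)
The plan is to transport the question to the polynomial ring $P$ by linkage, extract the degrees and multiplicities of the minimal generators of the linked ideal from the resolutions of Section~\ref{sec:relcomp}, and translate back. For Step~1 (linkage), write $J=(\fc:f)$ with $\fc=(x_1^q,\dots,x_n^q)$ and $f$ of degree $d$, $1<d<q$, as in Lemma~\ref{phi}. Multiplication by $f$ fits into a short exact sequence
\[
0\to(P/J)(-d)\xra{\cdot f}P/\fc\to R/\fm^{[q]}\to 0,
\]
injective by the definition of $J$ and with cokernel $P/(\fc+(f))=R/\fm^{[q]}$. Applying the graded Matlis dual $(-)^\vee=\Hom_k(-,k)$, and using that $P/\fc$, $P/J$ are Gorenstein with socle degrees $n(q-1)$ and $s$ (so $(P/\fc)^\vee\is(P/\fc)(n(q-1))$, $(P/J)^\vee\is(P/J)(s)$, while the dual of $\cdot f$ becomes a shift of the natural surjection $P/\fc\onto P/J$), one obtains $(R/\fm^{[q]})^\vee\is(J/\fc)(n(q-1))$. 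Since $\dim_k\soc(M)_j$ equals the number of minimal generators of $M^\vee$ in degree $-j$, this gives $\dim_k\soc(R/\fm^{[q]})_j=\mu_{\,n(q-1)-j}(J/\fc)$, writing $\mu_i(-)$ for the number of degree-$i$ minimal generators. So everything reduces to locating and counting the minimal generators of $J/\fc$, i.e.\ those of $J$ in degrees $>s/2$: the hypothesis $q\ge(n+d)/(n-2)$ is exactly $q\le s/2$, so relative compressedness forces $J=\fc$ in degrees $\le s/2$, where the only generators are the $x_i^q$, and these vanish in $J/\fc$.

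\emph{Step 2 (degrees; part (1)).} Apply Corollary~\ref{cor-degrees} to the truncation $J_{\ge m}$ with $m=\ceil{s/2}$: a homogeneous generating set is supported in degrees $m$, $m+1$, and $m+j'$ for $2\le j'\le t+1$, $t=s-m$, the last block arising from $W_{1,t-1}=\bigoplus_{j=0}^{t-1}C_{1,j}$. The decisive input is the vanishing $W_{1,t-1}=0$ (the content of Proposition~\ref{cokernelsvanish}/Corollary~\ref{generatingdegreesofJ}): together with the always-valid $C_{0,j}=0$ (which only records $\sum_i X_iD'_{j+1}=D'_j$) it leaves generators just in degrees $m$ and $m+1$. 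To prove it, note that $(\Lambda^\bullet\otimes D',\eta')$ is a subcomplex of the Koszul complex $(\Lambda^\bullet\otimes D,\eta)$ on $X_1,\dots,X_n$, which is exact in positive internal degrees; the resulting long exact sequence identifies $C_{i,j}\is\h_{i+1}(\Lambda^\bullet\otimes(D/D'))$. By Proposition~\ref{prp-D'basis}, for $j\le s/2$ the free module $D'_j$ has the explicit monomial basis consisting of $X_1^{(a_1)}\cdots X_n^{(a_n)}$ with $\sum a_i=j$ and all $a_\ell<q$, and self-duality $\rank D'_j=\rank D'_{s-j}$ fixes the remaining ranks; with this one computes $\h_1$ and $\h_2$ of the truncated Koszul complex $\Lambda^\bullet\otimes(D/D')$ in internal degrees $\le m$ by an inclusion--exclusion over the set of variables carrying an exponent $\ge q$, and both vanish. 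Translating through Step~1: $\soc(R/\fm^{[q]})$ lives in degrees $n(q-1)-m$ and $n(q-1)-(m+1)$; for $s$ even only the $m+1$ block is genuinely new (as $J_{s/2}=\fc_{s/2}$), giving the single degree $s_2$, and for $s$ odd one gets $s_1$ and $s_3$. This is part (1).

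\emph{Step 3 (dimensions; parts (2)--(3)).} Relative compressedness reduces the counts to the complete intersection Hilbert function $h_i=H_i(P/\fc)=[t^i](1+t+\dots+t^{q-1})^n$. With $m=\ceil{s/2}$: when $s$ is odd, $\mu_m(J/\fc)=h_m-H_m(P/J)=h_m-h_{s-m}$ (since $(J/\fc)_{m-1}=0$), and for $n=3$ the explicit formula for $h_i$ together with $q\ge d+3$ gives $h_{(s+1)/2}-h_{(s-1)/2}=d=\dim_k\soc(R/\fm^{[q]})_{s_1}$; in all cases $\mu_{m+1}(J/\fc)=h_{m+1}-H_{m+1}(P/J)$, a second difference of $h$ near $s/2$, which for $n=3$ evaluates to $2d$ if $s$ is even and is $\le 3d$ if $s$ is odd. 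For $n\ge4$, $s$ even, $\dim_k\soc(R/\fm^{[q]})_{s_2}=\mu_{s/2+1}(J/\fc)$ is again expressed through the $h_i$ near $s/2$; the Euler-characteristic identity for the complex $\Lambda^\bullet\otimes D'$ displays it as a $\binom{n}{\bullet}$-weighted alternating sum of complete intersection Hilbert numbers whose top-order terms cancel, leaving a polynomial in $q$ of degree $\le n-2$ whose coefficients depend only on $n$ and $d$ and whose leading term is as stated; for $s$ odd, $n\ge4$, bounding $\mu_{m+1}(J/\fc)\le\dim(J/\fc)_{m+1}$ gives the asserted polynomial bound.

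The hardest step is the vanishing $W_{1,t-1}=0$ of Step~2: it is exactly what makes the start of the resolution of $J_{\ge\ceil{s/2}}$ short enough to capture \emph{every} high-degree generator of $J$, and it is where ``relatively compressed with respect to a complete intersection'' is genuinely used, through the combinatorics of $D/D'$. A secondary difficulty is the $n\ge4$ leading-term bookkeeping in Step~3, an asymptotic count of complete intersection Hilbert numbers near half their top degree.
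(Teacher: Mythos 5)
Your overall architecture is the same as the paper's: reduce to the linked ideal $J=(\fc:f)$, use the truncated-ideal resolution machinery (Theorem~\ref{resn-general}, Corollary~\ref{cor-degrees}, Proposition~\ref{prp-D'basis}) to locate the generators of $J$ above degree $s/2$ via the vanishing of $C_{0,j}$ and $C_{1,j}$, and then count by Hilbert functions of $P/\fc$; your counts in Step~3 coincide with the paper's binomial computation in Proposition~\ref{numberofgeneratorsofJ}. Two local choices differ. First, your linkage step derives the socle--generator correspondence by Matlis-dualizing the sequence $0\to(P/J)(-d)\to P/\fc\to R/\fm^{[q]}\to 0$, rather than invoking Remark~\ref{muofJ} (i.e.\ \cite[1.4]{KV-07}); this is correct and essentially equivalent, and has the small advantage of producing the identification $(R/\fm^{[q]})^\vee\cong(J/\fc)(n(q-1))$ without first checking that the $x_i^q$ are minimal generators of $J$. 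Second, for the crucial vanishing $W_{1,t-1}=0$ you pass to the quotient complex: the long-exact-sequence identification $C_{i,j}\cong\h_{i+1}(\Lambda^\bullet\otimes(D/D'))$ is legitimate (the full Koszul strands on $D$ fail to be exact only at $\Lambda^n\otimes D_0$, which is irrelevant here), whereas the paper works directly inside the subcomplex, exhibiting explicit $\eta'$-preimages of a generating set of cycles of $K'_{1,j}$.

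The soft spot is exactly in that second variant. Proposition~\ref{prp-D'basis} identifies $D'_j$ with the span of monomials with all exponents $<q$ only for $j\le s/2$; your ``inclusion--exclusion over the variables carrying an exponent $\ge q$'' therefore describes $(D/D')_j$ only in that range. But computing $\h_2$ of the quotient complex at the spot relevant to $C_{1,j}$ with $j=t-1$ requires boundaries coming from internal degree $j+2=t+1$, which equals $s/2+1>s/2$ when $s$ is even, where $D/D'$ is strictly larger than the monomial complement and your combinatorial model is no longer valid; the hedge ``in internal degrees $\le m$'' does not cover this spot. The gap is repairable: either observe that, since $D_{j+2}\onto(D/D')_{j+2}$, the boundary space at that spot equals the image of all of $\Lambda^3\otimes D_{j+2}$, which is computable without knowing $D'_{j+2}$; or simply argue inside the subcomplex as in Proposition~\ref{cokernelsvanish}, where the three internal degrees $j-1,j,j+1$ involved in $C_{1,j}$ all stay $\le s/2$ and the monomial bases apply throughout. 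A minor further slip: for $s$ odd the displayed equality $\mu_{m+1}(J/\fc)=h_{m+1}-H_{m+1}(P/J)$ should only be the inequality $\mu_{m+1}\le\dim_k(J/\fc)_{m+1}$, since multiples of the degree-$m$ generators also land in degree $m+1$ (this is the paper's unknown $g_q$); you only use the inequality, so the stated bound $\le 3d$ and the polynomial bound for $n\ge4$ survive unchanged.
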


Our strategy in this section is as follows.
First we rewrite the quotient rings as
\[
R/\fm^{[q]} \cong P/I_q,
{\textrm{ where }}
I_q=({\bf x}^q)+(f).
\] 
As Kustin, Rahmati, and Vraciu do in \cite{KRV-12}, we consider the link of $I_q$ in $P$ 
\begin{equation}
J_q 
=\left( ({\bf x}^q) :_P I_q \right)
=\left( ({\bf x}^q) :_P f \right)
\end{equation}
via the complete intersection ideal $\fc=({\bf x}^q)$. 
Since $J_q$ is linked to the almost complete intersection ideal $I_q$, it follows that $J_q$ is a Gorenstein ideal, that is, $P/J_q$ is a Gorenstein ring. 

First we recall a useful connection between the socle degrees of $P/I_q$ and the degrees of the generators of its link $J_q$.
We will apply this remark to deduce both Theorem~\ref{soc-thm} and Corollary~\ref{KUsoc} from Proposition~\ref{numberofgeneratorsofJ}.

\begin{rmk}\label{muofJ}
Let $P=k[x_1, \dots, x_n]$ and let $f$ be a homogeneous polynomial  in $P$. Let   ${\bf x}= x_1,\dots, x_n$ and  $I_q=({\bf x}^q)+(f)=(x_1^q,\cdots,x_n^q,f)$. 

Assume that $x_1^q,\cdots,x_n^q$ are part of a minimal generating set for the Gorenstein ideal $J_q=({\bf x}^q):I_q$.
For any basis $\{u_1, \dots, u_{\ell}\}$ of the socle of $P/I_q$, there are homogeneous elements $v_1, \cdots, v_{\ell}$ in $P$ such that $\{x_1^q,\dots,x_n^q,v_1, \cdots, v_{\ell}\}$ minimally generate $J_q$ and $|u_i|+|v_i|=n(q-1)$. See, for example,  \cite[1.4]{KV-07} or \cite{MMN-11}. In particular, one has 
\begin{equation*}
    \mu(J_q)=\dim_k (\soc P/I_q)+n
\end{equation*}

Note that the converse also holds, that is, if  $\{x_1^q,\dots,x_n^q,v_1, \cdots, v_{\ell}\}$   is a minimal set of homogeneous generators for  $J_q$,  there are elements $u_1, \dots, u_{\ell}$ in $P/I_q$ with $|u_i|+|v_i|=n(q-1)$ that form a basis for the socle of $P/I_q$.
 \end{rmk}

For the rest of this section, we assume that $f$ is chosen so that $J_q$ is $(x_1^{q}, \dots, x_d^{q})$-compressed. 
Next, we work towards our goal of finding the number and degrees of minimal homogeneous generators of $J_q$. 
Recall from Lemma~\ref{phi} that the inverse polynomial of $J_q$ is 
\[
\varphi=f[x_1^{(q-1)} \cdots x_n^{(q-1)}].
\]
Its degree equals 
\[
\deg \varphi 
=s 
=n(q-1)-d, 
{\textrm{ \ \ where \ \ }}
d=\deg f.
\]

Our strategy 
is as follows. 
Recall that, since $J_q$ is $({\bf x}^q)$-compressed, its minimal homogeneous generators besides those of $({\bf x}^q)$ lie in degrees greater than or equal to $\frac{s}{2}$; see the remark in Definition~\ref{relcompressed} or Lemma~\ref{TFAErelativelycompressed}(3). 
For the ideal $J_q$, we figure out the number and degrees of these additional generators by determining the beginning of the resolution of $P/(J_q)_{\ge \frac{s}{2}}$ via Proposition~\ref{resn-general}. In this case, the unknown cokernels in the beginning part of the resolution turn out to be zero by the following result. 
We note that we cannot find a full explicit resolution of $P/J_q$ since the appropriate cokernels no longer vanish in degrees below $\frac{s}{2}$.

We now show that when $m=\lceil\frac{s}{2}\rceil$ 
the modules in Diagram~\ref{resolution}
\[
W_{i,s-m-1}=\oplus_{j=0}^{s-m-1} C_{i,j}
\] 
vanish for $i=0,1$. Recall from Definition~\ref{dfn-K'} that
\begin{equation*}
C_{i,s-\ell-1} 
=\coker \left( 
\Lambda^{i+1} \otimes D'_{s-\ell}
\xra{\eta'} 
K'_{i,s-\ell-1} 
\right)
\end{equation*}

\begin{prp}
\label{cokernelsvanish}
Assume the notation above. 
For all $\ell\le s-1$, one has 
\[
C_{0,s-\ell-1}=0.
\]
Moreover, if  $J_q$ is  $(x_1^{q}, \dots, x_n^{q})$-compressed, then 
\[
C_{1,s-\ell-1}=0
\]
for every $\ell \geq s/2-1$.
 
\end{prp}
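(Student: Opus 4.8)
\textbf{Plan for the proof of Proposition~\ref{cokernelsvanish}.}
The statement concerns the homologies $C_{i,s-m-1} = \h_i$ of the subcomplex (\ref{def-eta'}) in homological degrees $i=0$ and $i=1$, so the plan is to analyze the tail of that complex,
\[
\Lambda^2 \otimes D'_{s-m+1} \xra{\eta'} \Lambda^1 \otimes D'_{s-m} \xra{\eta'} \Lambda^0 \otimes D'_{s-m-1} \lra 0,
\]
and show that $\eta'$ into $D'_{s-m-1}$ is surjective (for $C_0=0$) and that the sequence is exact at $\Lambda^1 \otimes D'_{s-m}$ (for $C_1=0$). Both $\Lambda^0$ and $\Lambda^1$ are free of ranks $1$ and $d$, so these are really statements about the map $\eta \colon F\otimes D'_{s-m} \to D'_{s-m-1}$, which sends $X_j \otimes g$ to $x_j\cdot g$ (contraction), and its analogue one step up.

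For $C_{0,s-m-1}=0$: the claim is that every divided-power monomial in $D'_{s-m-1}$ is of the form $x_j \cdot g$ for some $g\in D'_{s-m}$ and some variable $x_j$. First I would reduce to monomials. A basis element of $D'_{s-m-1}$ lies in the image of $\Phi_{m+1}$, i.e. is obtained as $h\varphi$ with $\varphi = f/(x_1^{(q-1)}\cdots x_n^{(q-1)})$ as in Lemma~\ref{phi}; equivalently, since $D'$ is spanned by monomials appearing in such products, a basis monomial $X_1^{(a_1)}\cdots X_n^{(a_n)}$ of $D'_{s-m-1}$ has all $a_j < q$. Pick any index $j$ with $a_j \ge 1$ (possible since $s-m-1 \ge s-(s-1)-1 = 0$; when the degree is $0$ the statement is vacuous as $D'_0$ injects to nothing — more precisely, one checks $s-m-1 \ge 1$ is the only interesting range), raise the $j$-th exponent by one to get $X_1^{(a_1)}\cdots X_j^{(a_j+1)}\cdots X_n^{(a_n)}$, still with exponent $a_j+1 \le q$, so it lies in $Y_{s-m}$; but by Proposition~\ref{prp-D'basis}, applied with $a = m \ge s/2$ — here I need $m\ge s/2$, which is the hypothesis of the ``moreover'' clause — that monomial lies in $D'_{s-m}$, and $\eta'(X_j \otimes(\text{that monomial})) $ recovers our original monomial. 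This handles surjectivity. I should double-check that for $m < s/2$ (the range where only $C_0=0$ is asserted) surjectivity of $\eta'$ onto $D'_{s-m-1}$ can instead be extracted directly: here $D'_{s-m-1}$ and $D'_{s-m}$ are just $D_{s-m-1}$ and $D_{s-m}$ because $\Phi_j$ is surjective for $j \le s/2$ is false — rather, $\Phi_{m+1}$ surjects onto $D'_{s-m-1}$ and $\Phi_m$ onto $D'_{s-m}$; the point is that $D'$ is closed under multiplication by variables going \emph{down} in degree whenever we are in the surjective range, which is exactly $\eta'$ being defined and, by Definition~\ref{dfn-D'} and $\eta\circ(1\otimes\Phi)=(1\otimes\Phi)\circ\kappa$, surjective onto $D'_{s-m-1}$ since $\kappa\colon \Lambda^1\otimes S_m \to S_{m+1}$ is surjective. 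That last observation is cleaner and works for all $m\le s-1$, so I would present $C_{0,s-m-1}=0$ that way.

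For $C_{1,s-m-1}=0$ under the relatively compressed hypothesis and $m\ge s/2-1$: by (\ref{C=homology}) this is exactness of $\Lambda^2\otimes D'_{s-m+1}\xra{\eta'}\Lambda^1\otimes D'_{s-m}\xra{\eta'}D'_{s-m-1}$. The strategy is to compare with the corresponding strand of the \emph{full} Koszul complex $\mathbb K_{s-m}$, which is exact there: $K_{1,s-m} = \ker(\Lambda^1\otimes D_{s-m}\to D_{s-m-1})$ equals $\im(\Lambda^2\otimes D_{s-m+1}\to\Lambda^1\otimes D_{s-m})$. Using Proposition~\ref{prp-D'basis} twice — since $m\ge s/2-1$ we get $s-m \le s/2+1$ so $m+1 \ge s/2$ and hence $D'_{s-m-1}$, and also $m\ge s/2-1$ gives... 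I need $D'_{s-m}=Y_{s-m}$, i.e. $m \ge s/2$, and $D'_{s-m+1}=Y_{s-m+1}$, i.e. $m-1\ge s/2$; so the honest range requires care and I expect the cleanest argument uses $m\ge \ceil{s/2}$ for the top piece and a separate small check at $m = \ceil{s/2}-1$ using Lemma~\ref{lem-surjective}. Granting $D'_{s-m}=Y_{s-m}$ and $D'_{s-m+1}=Y_{s-m+1}$, a cycle $\xi\in\Lambda^1\otimes Y_{s-m}$ with $\eta'\xi=0$ is a cycle in $\Lambda^1\otimes D_{s-m}$, hence $\xi = \eta(\zeta)$ for some $\zeta\in\Lambda^2\otimes D_{s-m+1}$; the task is to modify $\zeta$ by a boundary so that it lands in $\Lambda^2\otimes Y_{s-m+1}=\Lambda^2\otimes D'_{s-m+1}$. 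Writing $\zeta = \zeta' + \zeta''$ where $\zeta'\in\Lambda^2\otimes Y_{s-m+1}$ and $\zeta''$ is supported on monomials with some exponent $\ge q$, I would show $\eta(\zeta'')$ is forced to be $0$ because applying $\eta$ to a monomial with an exponent $\ge q$ in $\Lambda^1\otimes D_{s-m}$ either keeps an exponent $\ge q$ (landing outside $Y_{s-m}$, hence must cancel) or drops it to exactly $q-1$, and a bookkeeping argument on which terms can cancel — combined with the fact that $\Lambda^{\ge 2}\otimes(\text{monomials with an exponent}\ge q)$ forms a \emph{subcomplex} of the Koszul strand on the complementary summand $D/D'$ — shows $\eta(\zeta')=\xi$ already. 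The main obstacle I anticipate is exactly this last cancellation analysis: disentangling the ``good'' summand $Y$ from its complement inside the Koszul differential and verifying that the full-complex exactness descends to the subcomplex. The cleanest route may be to observe that $D = D' \oplus D''$ as complexes under $\eta$ is \emph{false} in general but becomes true in the relevant degree range precisely because of Proposition~\ref{prp-D'basis} and ($4'$) of Lemma~\ref{TFAErelativelycompressed}: the complement is spanned by monomials with some exponent $\ge q$, and contraction by a variable never creates such an exponent, so $D''$ \emph{is} a subcomplex, whence $D'$ is a quotient complex and — since each $\eta'$ is split as shown in Proposition~\ref{prp-K'free} — in fact a direct summand subcomplex in the range where $D'_{s-m} = Y_{s-m}$; exactness of the summand then follows from exactness of the whole for $\mathbb K_{s-m}$ in positive homological degree, with the boundary cases $m\in\{\ceil{s/2}-1,\lceil s/2\rceil\}$ dispatched via Lemma~\ref{lem-surjective} and Corollary~\ref{relativelyquick}.
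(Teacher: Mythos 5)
Your treatment of $C_{0,s-m-1}=0$ is correct and is in substance the paper's own argument: surjectivity of $\eta'$ onto $\Lambda^0\otimes D'_{s-m-1}$ follows from $\eta\circ(1\otimes\Phi)=(1\otimes\Phi)\circ\kappa$ together with surjectivity of $\kappa\colon\Lambda^1\otimes S_m\to S_{m+1}$ and the fact that $\Phi_{m+1}$ surjects onto $D'_{s-m-1}$ by definition; the paper just writes this elementwise, producing $g\varphi$ as $\eta'(e_i\otimes\tfrac{g}{x_i}\varphi)$.

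The second half, however, has a genuine gap. Your ``cleanest route'' rests on the claim that the complement of $D'$ in $D$, spanned by the divided power monomials having some exponent $\ge q$, is a subcomplex of the strand because contraction never creates an exponent $\ge q$. That observation is pointed the wrong way: it shows that the span of monomials with \emph{all} exponents $<q$ (i.e.\ $Y$, which is $D'$ in the range where Proposition~\ref{prp-D'basis} applies) is closed under $\eta$ --- which you already know, since $\eta'$ is defined --- but it does not show the complement is closed. It is not: for example $\eta\bigl(e_1\otimes X_1^{(q)}X_2^{(a_2)}\cdots X_n^{(a_n)}\bigr)=1\otimes X_1^{(q-1)}X_2^{(a_2)}\cdots X_n^{(a_n)}$ lands in $Y$ whenever the remaining $a_j<q$, and monomials with an exponent exactly $q$ do occur in the relevant internal degrees (e.g.\ $s-m\approx\tfrac32 q$ when $n=3$). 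So $\Lambda^\bullet\otimes D'$ is a subcomplex but not a direct summand of complexes, and exactness of the ambient strand does not descend to it; the splitting of each individual map $\eta'$ (Proposition~\ref{prp-K'free}) does not repair this. Nor can any purely formal argument of this kind work: in the range where $D'_j=Y_j$, the subcomplex is (up to a degree-reversing shift) the Koszul complex on $X_1,\dots,X_n$ with coefficients in $Y\cong S/(X_1^q,\dots,X_n^q)$, whose $H_1$ is genuinely nonzero --- it is concentrated in the internal degrees carrying the Koszul relations among the $X_i^q$ --- so the asserted vanishing is a statement about the specific degrees at hand. This is exactly how the paper argues: using Proposition~\ref{prp-D'basis} it lists explicit generators of the cycle module $K'_{1,s-m-1}$ (the differences $e_i\otimes(\cdots x_i^{(a_i-1)}\cdots x_j^{(a_j)}\cdots)-e_j\otimes(\cdots x_i^{(a_i)}\cdots x_j^{(a_j-1)}\cdots)$ and the terms $e_i\otimes(\cdots x_i^{(0)}\cdots)$) and exhibits explicit preimages in $\Lambda^2\otimes D'_{s-m}$; the only delicate point is that for the second type one needs some $a_j<q-1$ with $j\ne i$ so that the exponent can be raised without leaving $Y_{s-m}$, and this is where the degree bound coming from $m\ge s/2-1$ enters. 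Your alternative sketch (correct the Koszul preimage $\zeta$ by a boundary) is the honest version of this bookkeeping, but you leave it unexecuted and yourself flag it as the obstacle. Two smaller points: your three-term complex computes $C_{1,s-m}$ rather than $C_{1,s-m-1}$, which silently shrinks the range of $m$ covered, and the boundary value of $m$ is not actually dispatched by citing Lemma~\ref{lem-surjective} or Corollary~\ref{relativelyquick}, which concern the relatively compressed condition itself rather than the vanishing of $C_1$.
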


\begin{proof}
We begin with $C_{0,s-\ell-1}$. 
It is easy to see that 
the map 
\[
\Lambda^1 \otimes D'_{s-\ell} \xra{\eta'} \Lambda^0 \otimes D'_{s-\ell-1}=K_{0,s-\ell-1}'
\]
is surjective. Indeed, since by definition $D'_{s-\ell-1}=\Im \Phi_{\ell+1}$, it is enough to show that, for any monomial $G$ in $S_{\ell+1}$, the product $G\Phi$ is in the image of $\eta'$. To see this, suppose that $X_i$ is a variable appearing in the monomial $G$. 
Then one has 
\[
\eta'\left( \sum e_i \otimes G{X_i}\varphi  \right) 
=X_i (G{X_i} \Phi)
=(X_i G{X_i}) \Phi
=G \Phi
\]
where the last equality is because $D$ is an $S$-module. 

To prove the second assertion, 
we first calculate the differential on each basis element of $\Lambda^1 \otimes D_{s-\ell-1}'$ as follows. 
First recall from Proposition~\ref{prp-D'basis} that a basis for $D_{s-\ell-1}'$ is given by the monomials $X_1^{(a_1)}\cdots X_n^{(a_n)}$ of degree $s-\ell-1$ with each exponent satisfying $0\leq a_k < q$. 
For each $i=1,\dots,n$ and each such monomial we have 
\[
\eta' ( e_i \otimes X_1^{(a_1)}\cdots X_n^{(a_n)} ) 
=
\begin{cases}
1\otimes X_1^{(a_1)}\cdots X_i^{(a_i-1)} \cdots X_n^{(a_n)}
& {\text{ \ if \ }} a_i \ge 1 
\\
0
& {\text{ \ if \ }} a_i=0 \end{cases}
\]
Since the elements $1\otimes X_1^{(b_1)}\cdots X_n^{(b_n)}$ with $0\leq b_i \leq q-1$ form a basis of $\Lambda^0 \otimes D_{s-\ell-2}'$, the kernel $K_{s-\ell-1}'$ is clearly generated by the cycles 
\[
\{ 
e_i \otimes X_1^{(a_1)}\cdots X_i^{(0)} \cdots X_n^{(a_n)} 
\}
\]
for $1\leq i\leq d$ and $0\leq a_k\leq q-1$
and the cycles 
\[
\{
e_i \otimes X_1^{(a_1)}\cdots X_i^{(a_i-1)}\cdots X_j^{(a_j)} \cdots X_n^{(a_n)} 
- 
e_j \otimes X_1^{(a_1)}\cdots X_i^{(a_i)} \cdots X_j^{(a_j-1)} \cdots X_n^{(a_n)}
\}
\]
for $1 \leq i < j \leq d$ and $1\leq a_k\leq q-1$. 
The latter of these cycles is the image of 
\[
e_i \wedge e_j  \otimes X_1^{(a_1)}\cdots X_i^{(a_i)} \cdots X_j^{(a_j)} \cdots X_n^{(a_n)}
\] 
under $\eta'$. 
For the former cycles, note that $\sum a_j \leq d(q-1)/2 $  implies that $a_j < q-1$ for some $j \not = i$. 
Assume without loss of generality that $j<i$ (else one just needs an additional minus sign). Then one has 
\[
e_i \otimes X_1^{(a_1)}\cdots X_i^{(0)} \cdots X_n^{(a_n)} 
=
\eta'\left(e_j \wedge e_i \otimes X_1^{(a_1)}\cdots X_j^{(a_j+1)} \cdots x_i^{(0)} \cdots X_n^{(a_n)} \right)
\]
as desired. 
\end{proof}

Under the assumption that $J_q$ is $(x_1^{q}, \dots, x_n^{q})$-compressed and $q \le \frac{s}{2}$, 
Lemma~\ref{TFAErelativelycompressed} applies to the ideal $J_q$ and tells us that the minimal homogeneous generators of $J_q$ in degrees $i\leq \frac{s}{2}$ are simply $x_1^{q}, \dots, x_n^{q}$. 
The next proposition assures us that any additional generators in degrees $i>\frac{s}{2}$ lie only in certain degrees. In view of Remark~\ref{rmk-general}, the result below holds for general choices of $f$ of degree $d$.

\begin{cor}
\label{generatingdegreesofJ}
Let $P=k[x_1, \dots, x_n]$ with $n\geq 3$, assume the notation above, and assume that $f$ is \linkcomp{$q$} (that is, $J_q$ is $(x_1^{q}, \dots, x_n^{q})$-compressed) for some $q$, with  $q \geq (d +n)/(n-2)$ (so that $q \le \frac{s}{2}$). 

Then the minimal homogeneous generators for $J_q$ in degrees $i>\frac{s}{2}$ lie 
\begin{itemize}
\item
in degree $\frac{s}{2}+1$ when $s$ is even and 
\item 
in degrees $\lceil \frac{s}{2}\rceil$ and $\lceil \frac{s}{2}\rceil+1$ when $s$ is odd. 
\end{itemize}
\end{cor}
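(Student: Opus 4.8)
The plan is to apply Corollary~\ref{cor-degrees} to the ideal $J=J_q$ with the truncation degree $m=\lceil \frac{s}{2}\rceil$, and to show that the ``extra'' term $W_{1,t-1}$ in the generating set produced there is zero, so that the only contributions to a minimal generating set of $(J_q)_{\ge m}$ beyond degree $\frac{s}{2}$ occur in degrees $m$ and $m+1$. First I would record that the hypothesis $q \geq (d+n)/(n-2)$ is exactly what makes $q \le \frac{s}{2}$, since $s = n(q-1)-d$, so $s/2 \geq q$ iff $n(q-1)-d \geq 2q$ iff $q(n-2) \geq n+d$; this is needed so that the complete intersection generators $x_1^q,\dots,x_n^q$ sit in degree $\le \frac{s}{2}$ and, by Lemma~\ref{TFAErelativelycompressed}(3) (using that $J_q$ is relatively compressed), they are the only generators in degrees $\le \frac{s}{2}$ — equivalently $J_i = \fc^i$ there. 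Combined with the remark in Definition~\ref{relcompressed}, this reduces the problem to locating the generators in degrees $> \frac{s}{2}$, which is precisely the content of the truncated resolution $P/(J_q)_{\ge m}$.

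Next I would invoke Theorem~\ref{resn-general} / Corollary~\ref{cor-degrees} with $m = \lceil \frac{s}{2}\rceil$ and $t = s - m = \lfloor \frac{s}{2}\rfloor$. Corollary~\ref{cor-degrees} gives generators of $(J_q)_{\ge m}$ in degrees $m$ (from $\ker(1\otimes\Phi \colon L_{0,m}\to K'_{0,t})$), degree $m+1$ (from $\coker(1\otimes\Phi\colon L_{1,m}\to K'_{1,t})$), and degrees $m+j'$ with $2 \le j' \le t+1$ (from a basis of $W_{1,t-1}$). The key step is then to show $W_{1,t-1} = \bigoplus_{j=0}^{t-1} C_{1,j} = 0$. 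Here I would apply Proposition~\ref{cokernelsvanish}: writing each $C_{1,j} = C_{1,s-m'-1}$ for $m' = s-j-1$, the proposition asserts $C_{1,s-m'-1}=0$ whenever $m' \ge s/2 - 1$, i.e. whenever $j = s - m' - 1 \le s/2$. Since the summands of $W_{1,t-1}$ run over $j = 0, \dots, t-1 = \lfloor s/2\rfloor - 1 < s/2$, every $C_{1,j}$ in the sum vanishes, so $W_{1,t-1}=0$. (One should also note $W_{0,t-1}=0$ by the first statement of Proposition~\ref{cokernelsvanish}, though that only affects degree-$m$ generators, which are already accounted for.)

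With $W_{1,t-1}=0$, Corollary~\ref{cor-degrees} tells us that any minimal generator of $(J_q)_{\ge m}$ lies in degree $m$ or $m+1$, where $m = \lceil \frac{s}{2}\rceil$. Combined with the first paragraph — that in degrees $\le \frac{s}{2}$ the only generators are the complete intersection generators in degree $q < \frac{s}{2}$ — the generators in degrees $i > \frac{s}{2}$ lie exactly in $\{m, m+1\}$. Finally I would split into cases on the parity of $s$: if $s$ is even then $m = \frac{s}{2}$, but degree $\frac{s}{2}$ contributes no new generators (the relatively compressed condition forces $J$ to agree with $\fc$ through degree $\frac s2$, equivalently the degree-$m$ generators from $\ker(1\otimes\Phi)$ coincide with powers of the $x_i^q$ and are not minimal beyond $\frac s2$ — more cleanly, one notes $H_{s/2}(P/J) = \min\{H_{s/2}(P/\fc), H_{s/2}(P/\fc)\} = H_{s/2}(P/\fc)$ so $J_{s/2} = \fc_{s/2}$), hence the only new generators occur in degree $\frac{s}{2}+1$; if $s$ is odd then $\frac{s}{2}$ is not an integer, $\lceil \frac s2\rceil = m$ is the first integer degree exceeding $\frac s2$, and both $m = \lceil \frac s2\rceil$ and $m+1 = \lceil\frac s2\rceil + 1$ can carry new generators. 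This matches the two bulleted cases in the statement.

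The main obstacle I anticipate is the even-$s$ case: one must argue carefully that degree $m = \frac{s}{2}$ genuinely contributes no \emph{new} minimal generators, i.e. that the degree-$m$ part of $(J_q)_{\ge m}$ is generated by lower-degree elements (the Frobenius powers) rather than by something new. This is where the relatively compressed hypothesis does real work via the equality $J_{s/2} = \fc_{s/2}$ coming from Lemma~\ref{TFAErelativelycompressed}(3) and Remark~\ref{rmk-TFAEsinglei}; the rest is bookkeeping with the degree ranges from Corollary~\ref{cor-degrees} and the vanishing from Proposition~\ref{cokernelsvanish}.
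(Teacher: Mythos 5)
Your proof is correct and follows essentially the same route as the paper: invoke Proposition~\ref{cokernelsvanish} to kill the $W_{1,t-1}$ term in the generating set from Corollary~\ref{cor-degrees} at $m=\lceil s/2\rceil$, and then use the relatively compressed condition ($J_{s/2}=\fc_{s/2}$) to rule out new generators in degree $s/2$ when $s$ is even. You unpack the index bookkeeping for $W_{1,t-1}=\bigoplus_{j=0}^{t-1}C_{1,j}$ more explicitly than the paper, but that is a difference in exposition rather than approach.
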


\begin{proof}
In view of Proposition~\ref{cokernelsvanish}, it follows from Theorem~\ref{resn-general} for $m=\lceil\frac{s}{2}\rceil$, as explained at length in its unpacking in Remark~\ref{rmk-resn-minimal} and summarized in Corollary~\ref{cor-degrees}, that the generators of $(J_q)_{\geq m}$ lie in degrees $m$ and $m+1$.
But by definition of $(x_1^q,\ldots,x_n^q)$-compressed, the only minimal generators of $J_q$ in degrees less than or equal to $m$ are $x_1^{q}, \dots, x_n^{q}$. 
So the minimal generators of $J_q$ lie in degrees $q$, $m$, and $m+1$. 

If, furthermore, $s$ is even then since $\lceil\frac{s}{2}\rceil = \frac{s}{2}$ and $J$ is $(x_1^q,\ldots,x_n^q)$-compressed there are in fact no generators of $J_q$ in degree $m$. 
\end{proof}

The next question is how many such additional generators are there beyond the obvious generators $x_1^{q}, \dots, x_n^{q}$? This is answered by the following result. In view of Remark~\ref{rmk-general}, the result below holds for general choices of $f$ of degree $d$.

\begin{prp}
\label{numberofgeneratorsofJ}
Let $P=k[x_1, \dots, x_n]$ with $n\geq 3$, assume the notation above and assume that $f$ is \linkcomp{$q$} (that is, $J_q$ is $(x_1^{q}, \dots, x_n^{q})$-compressed) for some $q$, with  $q \geq (n+d)/(n-2)$ (so that $q \le \frac{s}{2}$).
Then $x_1^{q}, \dots, x_n^{q}$ are part of a minimal set of generators and the number of remaining generators is described below. 

\begin{itemize}
\item 
If $n=3$ and $s$ is even, then $J_q$ has exactly $2d$ additional minimal generators in degree $\frac{s}{2}+1$ and no others of higher degree. 

\item 
If $n=3$ and $s$ is odd, then $J_q$ has exactly $d$ additional minimal generators in degree $\lceil \frac{s}{2} \rceil$. It also has at most $3d$ additional generators in degree $\lceil \frac{s}{2}\rceil+1$ and no others of higher degree. 

\item
If $n \geq 4$, its dimension is equal to a polynomial in $q$ with leading term
\[
\frac{2}{(n-2)!} \left[ \left(\frac{n}{2}\right)^{n-2}\!\!\!\!-n\left(\frac{n-2}{2}\right)^{n-2} \right] q^{n-2}
\qquad 
{\textrm{ for $s$ even, }}
\]
and bounded by a polynomial in $q$ of degree $n-2$ for $s$ odd. 
Furthermore, the coefficients of these polynomials depend only on $n$ and $d$. 
\end{itemize}
\end{prp}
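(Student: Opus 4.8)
The plan is to count the additional minimal generators of $J_q$ beyond $x_1^q,\dots,x_n^q$ by computing the ranks of the relevant free modules at the beginning of the resolution of $P/(J_q)_{\ge m}$ for $m=\ceil{\frac{s}{2}}$, using Theorem~\ref{resn-general} together with the vanishing $W_{0,t-1}=0$ and $W_{1,t-1}=0$ from Proposition~\ref{cokernelsvanish}. By Corollary~\ref{cor-degrees}, once $W_{1,t-1}=0$, the generators of $(J_q)_{\ge m}$ are exactly the image under $\varepsilon$ of a basis of $\ker(1\otimes\Phi_m\colon L_{0,m}\to K_{0,t}')$ (degree $m$) together with the image under $\varepsilon\circ\sigma\circ\partial_{\mathbb K}$ of a basis of $\coker(1\otimes\Phi_m\colon L_{1,m}\to K_{1,t}')$ (degree $m+1$), where $t=s-m$. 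So the count reduces to computing $\rank\ker(1\otimes\Phi_m)_{|L_{0,m}}$ and $\rank\coker(1\otimes\Phi_m)_{|L_{1,m}}$, and then subtracting off the $n$ generators $x_i^q$ (which appear in degree $q\le m$, hence among the degree-$m$ generators only when $s$ is odd and $m=\ceil{\frac{s}{2}}$ could coincide with... actually $q<m$ here, so the $x_i^q$ are accounted separately and the degree-$m$ count from Corollary~\ref{cor-degrees} already excludes them by relative compression — I would state this carefully).

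The key computation is of the ranks $\rank L_{a,m}$ and $\rank K_{a,t}'$. Since $1\otimes\Phi_m$ has a matrix of units/zeros and $D_{s-m}'=Y_{s-m}$ by Proposition~\ref{prp-D'basis} (using relative compression), the map $1\otimes\Phi_m\colon L_{0,m}=S_m\to D_{s-m}'=Y_{s-m}$ is surjective, so $\rank\ker = \rank S_m - \rank Y_{s-m} = H_m(P) - H_{s-m}(P/\fc)$; and on $L_{1,m}$ one similarly gets $\rank\coker(1\otimes\Phi_m\colon L_{1,m}\to K_{1,t}') = \rank K_{1,t}' - \rank L_{1,m} + \rank(\text{image})$, where the image rank is controlled by surjectivity of $1\otimes\Phi_{m+1}$ in the relevant strand (again from relative compression via Proposition~\ref{prp-D'basis} applied in degree $m+1\ge s/2$). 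The modules $L_{a,m}$ and $K_{a,t}'$ are hook Schur/Weyl modules whose ranks are explicit binomial expressions; for instance $\rank L_{a,m}=\binom{n-1}{a}\binom{m+a}{m+1}$ (the rank of the $a$th syzygy of $R/\fm^{m+1}$ in the Buchsbaum–Eisenbud resolution, as in \ref{blk-be}), and $\rank K_{a,t}'$ is the analogous hook count but capped by the condition $a_j<q$ coming from $Y$. Plugging in $m=\ceil{\frac{s}{2}}$, $t=\floor{\frac{s}{2}}$, and $s=n(q-1)-d$ gives the claimed numbers.

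For $n=3$ the binomials collapse to linear/quadratic polynomials in $q$ that simplify exactly: when $s$ is even, $m=t=s/2$, there are no degree-$m$ generators (relative compression, as in Corollary~\ref{generatingdegreesofJ}), and the degree-$(m+1)$ count works out to $2d$; when $s$ is odd, the degree-$m=\ceil{s/2}$ count is $d$ and the degree-$(m+1)$ count is bounded by $3d$ (here only a bound, because in the odd case there may be cancellation in passing to the \emph{minimal} resolution — the map $*$ could still contribute, or rather the reduction of the $(1\otimes\Phi,0)$ columns might not be fully transparent, so one only gets $\le 3d$). For $n\ge 4$, the same rank formulas are evaluated, but now $\rank K_{1,t}'$ and $\rank L_{1,m}$ are genuine degree-$(n-2)$ polynomials in $q$; carrying out the binomial arithmetic and extracting the leading term yields $\frac{2}{(n-2)!}\bigl[(\tfrac n2)^{n-2}-n(\tfrac{n-2}{2})^{n-2}\bigr]q^{n-2}$ for $s$ even, with the lower-order coefficients depending only on $n$ and $d$ (since $s$ depends on $q$ only through $n(q-1)$ up to the additive constant $d$), and a degree-$(n-2)$ bound for $s$ odd.

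The main obstacle I expect is the odd case bookkeeping: when $s$ is odd one has generators in two degrees $\ceil{s/2}$ and $\ceil{s/2}+1$, and the degree-$\ceil{s/2}$ generators of $(J_q)_{\ge m}$ obtained from $\ker(1\otimes\Phi_m)$ must be reconciled with the fact that some of $J_q$'s minimal generators in that degree might fail to be minimal in $J_q$ itself, or conversely that the reduction to a minimal resolution (Remark~\ref{rmk-resn-minimal}) could kill some of the apparent $3d$ generators in degree $\ceil{s/2}+1$ against the degree-$\ceil{s/2}$ ones via the differential $\partial_{\mathbb K}$ having unit entries — this is exactly why the statement gives $d$ exactly but only $\le 3d$ for the top degree. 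Making the exact count $d$ rigorous requires checking that none of those $d$ cycles in $\ker(1\otimes\Phi_m\colon L_{0,m}\to K_{0,t}')$ lie in $\fm\cdot(\text{lower generators})$, i.e.\ a genuine minimality argument; I would handle this by a direct degree/weight inspection of the Koszul strand, exploiting that the only generators in smaller degree are the $x_i^q$ and that $q<\ceil{s/2}$.
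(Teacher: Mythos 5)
Your overall skeleton (locate the degrees via Proposition~\ref{cokernelsvanish}/Corollary~\ref{cor-degrees}, then count) is reasonable, but the counting route you choose leaves the real content undone and contains concrete errors. The heart of your plan is to compute $\operatorname{rank}\coker\bigl(1\otimes\Phi_m\colon L_{1,m}\to K'_{1,t}\bigr)$ from rank formulas for the hook modules together with ``surjectivity of $1\otimes\Phi_{m+1}$''; as set up this does not go through. You need the rank of the image of $L_{1,m}$, equivalently of $L_{1,m}\cap(\Lambda^1\otimes\ker\Phi_m)$, and you give no way to compute it; the sample formula $\operatorname{rank} L_{a,m}=\binom{n-1}{a}\binom{m+a}{m+1}$ is false already for $a=0,1$ (e.g.\ $L_{0,m}=S_m$ has rank $\binom{m+n-1}{n-1}$). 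Moreover you tacitly equate the rank of the homological-degree-one free module of the Theorem~\ref{resn-general} resolution with the number of \emph{minimal} generators; that resolution is only asserted to give a generating set (Corollary~\ref{cor-degrees}), so this identification needs proof. It is in fact true: applying the snake lemma to the map of exact rows $0\to L_{1,m}\to\Lambda^1\otimes S_m\to S_{m+1}\to 0$ and $0\to K'_{1,t}\to\Lambda^1\otimes D'_{t}\to D'_{t-1}\to 0$ (the bottom row is exact exactly because $C_{0,t-1}=0$) identifies $\coker(1\otimes\Phi_m|_{L_{1,m}})$ with $(J_q)_{m+1}/[\fm\,(J_q)_m]_{m+1}$ --- but you would have to supply this. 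Finally, the degree-$m$ bookkeeping is wrong as written: $\ker(1\otimes\Phi_m|_{L_{0,m}})=\ker\Phi_m=(J_q)_m$ contains \emph{all} of $\fc_m$, so one must subtract $\dim_k\fc_m$ (every monomial multiple of the $x_i^q$, with inclusion--exclusion when $n\ge 5$), not ``the $n$ generators $x_i^q$''. With that correction, relative compression immediately gives $0$ new degree-$m$ generators when $s$ is even and exactly $d$ when $s$ is odd; the extra minimality argument you worry about in your last paragraph is unnecessary, since $(J_q)_{m-1}=\fc_{m-1}$ and $\fm\,\fc_{m-1}=\fc_m$ once $q\le m-1$, which the hypothesis $q\ge (n+d)/(n-2)$ guarantees. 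The genuine uncertainty sits only in degree $\ceil{\frac{s}{2}}+1$ in the odd case, where multiples of the new degree-$\ceil{\frac{s}{2}}$ generators produce the slack ($3d-g_q$), and there your intuition matches the paper.

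The paper's actual proof is much more direct and never touches $L_{1,m}$, $K'_{1,t}$, or any hook-module ranks: after Corollary~\ref{generatingdegreesofJ} pins down the possible degrees, it computes $\dim_k (J_q)_j=\operatorname{rank}\ker\Phi_j$ from the four-term exact sequence $0\to\ker\Phi_j\to S_j\to D_{s-j}\to\coker\Phi_j\to 0$, identifies $\coker\Phi_j$ with $\ker\Phi_{s-j}=\fc_{s-j}$ via Lemma~\ref{lem-duals} and relative compression, and subtracts $\dim_k\fc_j$; four binomial coefficients then simplify to $2d$ ($s$ even), to $d$ and the bound $3d$ ($s$ odd), and to the stated asymptotics for $n\ge 4$. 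If you repair the three points above (image rank via the snake-lemma identification, correct hook ranks, correct subtraction), your resolution-rank route yields the same numbers, but it is strictly more work; I would reorganize the argument around the direct dimension count.
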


\begin{proof}
Recall that by Lemma~\ref{phi},  $s=n(q-1)-d$. By Corollary~\ref{generatingdegreesofJ} we know in which degrees the generators lie; it remains to determine the number in each degree. 

When $s$ is even, we check the number of generators of $J_q$ in degree $\frac{s}{2}+1$, and demonstrate that it does not depend on $q$. When $s$ is odd, we check the number of generators of $J_q$ in degree $\lceil \frac{s}{2} \rceil$, and demonstrate that it does not depend on $q$. In this case, there are additional generators in degree $\lceil \frac{s}{2} \rceil+1$, and the number of generators may depend on $q$.

First we examine the case where $n=3$ and $s$ is even. 
Set $a=\frac{s}{2}$. 
We are expecting new generators of $J_q$ in degree $a+1$. 
We find the dimension of $J_q$ in degree $a+1$, which is just the rank of the kernel of the map $\Phi_{a+1}\colon S_{a+1} \to D_{a-1}$. We count ranks along the exact sequence 
\[
0 \to \ker \Phi_{a+1} \to S_{a+1}\xra{\Phi_{a+1}} D_{a-1} \to \coker  \Phi_{a+1} \to 0 
\]
The ranks of $S_{a+1}$ and $D_{a-1}$ are $\binom{a+1+2}{2}$ and $\binom{a-1+2}{2}$, respectively. 
The cokernel of $\Phi_{a+1}$ is the same as the kernel of the dual map $\Phi_{a-1} \colon S_{a-1} \to D_{a+1}$, which has basis given the monomial generators of $(x_1^q,x_2^q,x_3^q)(x_1,x_2,x_3)^{a-1-q}$. So it has $3\binom{a-1-q+2}{2}$ generators. There are also $3\binom{a+1-q+2}{2}$ generators of $J_q$ in degree $a+1$ that are multiples of the old generators $x_1^q,x_2^q,x_3^q$ and do not interest us. 
Subtracting we are left with  
\begin{equation*}
\binom{a+1+2}{2}-\binom{a-1+2}{2}+3\binom{a-1-q+2}{2}-3\binom{a+1-q+2}{2}
\end{equation*}
generators. This expression simplifies to $2d$ generators, independent of $q$.

Next we examine the case where $n=3$ and $s$ is odd. Set $a=\frac{s-1}{2}=\floor{\frac{s}{2}}$. First we consider the new generators of $J_q$ in degree $a+1$. We have a rank of $\binom{a+1+2}{2}$  for $S_{a+1}$, and a rank of $\binom{a+2}{2}$ for $D_{a}$. The cokernel has rank equal to the kernel of the dual map $S_a \to D_{a+1}$ (note that $s-(a+1)=a$), which has rank $3\binom{a-q+2}{2}$. Lastly, there are $3\binom{a+1-q+2}{2}$ generators coming from $(x_1^q,x_2^q,x_3^q)$, that we discount. We are left with
\[\binom{a+1+1}{2}-\binom{a+2}{2}+3\binom{a-q+2}{2}-3\binom{a+1-q+2}{2}\]
generators in degree $a+1$, which simplifies to $d$ generators, independent of $q$.

Next we consider the new generators of $J_q$ in degree $a+2$. By a similar process, we get 
\[\binom{a+2+2}{2}-\binom{a-1+2}{2}+3\binom{a-1-q+2}{2}-3\binom{a+2-q+2}{2}\]
generators, minus whatever new generators came from the degree $a+1$ piece of $J_q$. 
If we call this number of new generators $g_q$, this gives us $3d-g_q$ generators. 

Lastly, we consider the case that $n\geq 4$. 
We give the details when $s$ is even; when $s$ is odd the proof follows from a modification analogous to the one above for $n=3$. 
Set $a=\frac{s}{2}$. 
We are expecting new generators of $J_q$ in degree $a+1$. 
As in the 3 variable case, we use the same exact sequence to compute the dimension of $J_q$ in degree $a+1$, then subtract the portion from the ideal $(x_1^q,\dots,x_n^q)$, finally obtaining \begin{equation}
\label{bigeqn}
\binom{a+1+n-1}{n-1}-\binom{a-1+n-1}{n-1}
+n\binom{a-1-q+n-1}{n-1}-n\binom{a+1-q+n-1}{n-1}
\end{equation}
The first 2 terms can be simplified to 
\begin{align*}
\frac{1}{n-2}\binom{a+n-2}{n-3}(2a+n)
&=
\frac{1}{n-2}
\left(
\frac{1}{(n-3)!} a^{n-3} + 
{\textrm{lower order terms}}
\right)
(2a+n) \\
&=
\frac{2}{(n-2)!} a^{n-2} + 
{\textrm{lower order terms}}
\end{align*}
where we note that lower order in $a$ is equivalent to lower order in $q$. 
Similarly, after we  factor out $-n$ from the last 2 terms, the remaining quantities can be simplified to the same thing with $a-q$ substituted for $a$. 
As $a
=\frac{s}{2}
=\frac{n(q-1)-d}{2}$, the expression (\ref{bigeqn}) simplifies to give the desired asymptotic result. 
\end{proof}

We remark that, for $s$ odd, we are not sure whether $g_q$ depends on $q$. 

\begin{rmk}
For polynomial rings with four or more variables, in contrast to the three variable case, the {\it dimension} of the socle of $R/\fm^{[q]}$ depends on $q$, even when $s$ is even. 
One can see this almost with any random choice of $f$ and $q\leq \frac{s}{2}$, that is, $q \geq (d +n)/(n-2)$.
\end{rmk}

\begin{rmk}
\label{rk:q=d+1}
We make a few side remarks for  polynomial rings in three variables, writing $P=k[x,y,z]$. 
Note that when $q < (n+d)/(n-2)$, we have $q > \frac{s}{2}$, so if $J_q$ is $(x^q,y^q,z^q)$-compressed, $J_q$ has no generators in degrees $ \frac{s}{2}$ or less and is in fact compressed. Resolutions of compressed ideals are described in \cite{ElkhKus-14, MiRa-18}, so we do not consider this case here (outside of this remark).

Nevertheless, when $n=3$ and $s$ is even, we can apply our methods for the case $q=d+1$, as then we have $q=\frac{s}{2}+1$. In this case, $J_q$ has $3q-d$ generators in degree $\frac{s}{2}+1$, which we find by the same method as the proof of Proposition \ref{numberofgeneratorsofJ}, computing  the number of generators as $\rank S_{\frac{s}{2}+1}-\rank D_{\frac{s}{2}-1}$. Three of these generators will be $x^q,y^q,$ and $z^q$, so we can apply Remark \ref{muofJ}. Hence the results of this paper should hold in this case.

However, when $q < d+1$, one has $q > \frac{s}{2}+1$, but by the argument above the minimal generators of $J_q$ still lie in degree $\frac{s}{2}+1$. So the elements $x^q, y^q,$ and $z^q$ are not minimal generators of $J_q$. This means we cannot apply Remark \ref{muofJ}, so we do not get our main results in this case. However, this is again the  compressed case which has already been studied.
\end{rmk}

Lastly, we focus for a moment on the case of 3 variables, writing 
\[
P=k[x,y,z].
\]
In \cite[1.1]{KU-09} Kustin and Ulrich show that under some mild conditions two Frobenius powers of the maximal ideal have the same  graded Betti numbers up to a particular given shift if and only if their quotient rings have isomorphic socles up to a particular shift. 
Here, just for the sake of curiosity,
we directly verify that their desired isomorphism of socles holds.

However, in Section~\ref{sec:frobenius} we prove {\it directly} that, whenever $f$ is \linkcomp{$q$} for various $q$, the conclusion for the Betti numbers follows from our results in this section (also without the mild hypotheses that their result would require). In view of Remark~\ref{rmk-general}, the result below holds for general choices of $f$ of degree $d$, and for very general choices of $f$ of degree $d$, it holds for all $q_0$.

\begin{cor}
\label{KUsoc}
Assume the notation above and fix a power $q_0\ge \deg f+3$ of $p$.  
Assume that $f$ is both \linkcomp{$q_0$}  and \linkcomp{$q_0q$} for some $q>1$ 
(that is, $J_{q_0}$ is $(x^{q_0}, y^{q_0}, z^{q_0})$-compressed and $J_{q_0q}$ is $(x^{q_0q}, y^{q_0q}, z^{q_0q})$-compressed). 

If $s$ is even, or if $s$ is odd and the minimal number of generators of $J_{q_0}$ and $J_{q_0q}$ 
in degree $\lceil \frac{s}{2} \rceil +1$ are equal, then 
\[
\soc R/\fm^{[q_0q]} 
\cong 
\soc R/\fm^{[q_0]}\left(-\frac{3}{2}q_0(q-1)\right)
\] 
  as graded vector spaces. 
\end{cor}
\begin{proof}
Set $q=p^e$ and $q_0=p^{e_0}$. 
We calculate $\soc(R/\fm^{[\tilde{q}]})$ for any $\tilde{q}=p^{\tilde{e}}\geq q_0$ and then specify to the values $\tilde{q}=q_0$ and $\tilde{q}=q_0q=p^{e_0+e}$ to obtain the desired result. 
To do this, recall that $R/\bar{I}^{[q]}\cong P/I_q$ and apply Remark~\ref{muofJ} 
with $I=I_{\tilde{q}}$, $J=J_{\tilde{q}}$, and $({\bf x}^{\tilde q})=(x_1^{\tilde{q}}, \dots, x_n^{\tilde{q}})$ to obtain the degrees of the generators of $\soc(R/\fm^{[\tilde{q}]})$ from the degrees generators of the link $J_{\tilde{q}}$. Note that the socle degree of the complete intersection is $P/({\bf x}^q)$ is $3(\tilde{q}-1)$.  
By Proposition \ref{numberofgeneratorsofJ} (plus the additional hypothesis when $s$ is odd), the number of generators of $J_{\tilde{q}}$ in each degree does not depend on $\tilde{q}$. 
Hence the dimension of
$\soc (R/\fm^{[\tilde{q}]})$ does not either. 

The degrees of the generators of $J_{\tilde{q}}$ do however depend on $\tilde{q}$. 
First suppose that $s$ is even.
The same proposition yields that the socle generators have degrees equal~to 
\begin{equation}\label{sigma-even}
3(\tilde{q}-1) -\frac{1}{2}\lceil 3(\tilde{q}-1)+d \rceil -1
=
\frac{1}{2} (3(\tilde{q}-1)+d) - 1
\end{equation}
where $d=\deg f$. 
Now suppose that $s$ is odd. Then the socle generators  have degrees
\begin{align}\label{sigma-odd}
\begin{split}
&3(\tilde{q}-1) -\frac{1}{2}\lceil 3(\tilde{q}-1)+d \rceil
=
\frac{1}{2} (3(\tilde{q}-1)+d) - \frac{1}{2} , 
~\text{ \ or}\\
&3(\tilde{q}-1) -\frac{1}{2}\lceil 3(\tilde{q}-1)+d \rceil -1
 =
\frac{1}{2}(3(\tilde{q}-1)+d) -\frac{3}{2}.
\end{split}
\end{align}
Subtracting the expressions obtained by plugging in $\tilde{q}=q_0=p^{e_0}$ and $\tilde{q}=q_0q=p^{e_0+e}$, one gets degree shifts equal to  
\[
\left(\frac{1}{2}(3(p^{e_0+e}-1)+d)-t\right)-\left(\frac{1}{2}(3(p^{e_0}-1)+d)-t\right)
\]
\[
=\frac{3}{2}(p^{e_0+e}-p^{e_0})=\frac{3}{2}p^{e_0}(p^e-1)=\frac{3q_0(q-1)}{2}
\]
where $t =\frac{1}{2}$, $1$ or $\frac{3}{2}$. The last assertion follows from the fact that $P/({\bf x}^{q_0})$ is resolved by the Koszul complex and therefore its back twist is $b=3q_0$.
\end{proof}

\begin{rmk}
\label{char-0}
We remark that there are characteristic zero versions of the results in this section if one uses the bracket powers  $\fm^{[t]}=(x_1^t,\cdots,x_n^t)$ of the maximal ideal for every integer $t$, instead of the Frobenius powers. We are not stating these results as we are  mostly interested in the positive characteristic case but interested readers can prove them using similar arguments with appropriate modifications. 

Note that in characteristic zero, by \cite[Lemma 4.15 and Thm 4.16]{IarKan-99}, for a general choice of $f$, the ideal $J_t$ is $\fm^{[t]}$-compressed, hence the assertions in the  characteristic zero versions of the results hold for general $f$.
\end{rmk}

  \section{Application to Frobenius powers}
  \label{sec:frobenius}

We now specialize to the case of three variables, where our socle results are strongest, applying these to obtain graded Betti numbers of Frobenius powers of the maximal ideal and Hilbert-Kunz functions. 
Let 
\[
R=P/(f) 
\qquad 
{\textrm{with \ }}
P=k[x,y,z]
\]
be a standard graded hypersurface ring over a field $k$ of characteristic $p>0$ with homogeneous maximal ideal $({\bf x})$ such that $f\in({\bf x})^2$. 

In this section, for sufficiently randomly chosen polynomials $f$, we find the format of the free resolutions of the quotients $R/\fm^{[q]}$ by the Frobenius powers of $\fm$ over both rings $P$ and $R$, as well as the structure of each associated matrix factorization (which turns out to be a pair of skew-symmetric matrices, one of which is linear with Pfaffian equal to $f$) and the Hilbert-Kunz function for $\fm$. In fact, we prove these results for all $f$ with the property that $(({\bf x}^q):f)$ is $({\bf x}^q)$-compressed, i.e. $f$ is \qlinkcomp. 
By Remark \ref{rmk-general}, for a fixed even $d$ and odd $p>d$, very general choices of $f$ are \qlinkcomp\ for all $q$.

The organization of the section is as follows: 
In the first part we explore what can be said without the assumption that $f$ is \qlinkcomp, 
specifically about the resolution over $P$ using linkage, the Buchsbaum-Eisenbud structure theorem for Gorenstein ideals of codimension three, and the corresponding dg-algebra structure. 
In the second part, we determine the structure of the resolution over $R$ using the construction of Shamash (Eisenbud-Avramov-Buchweitz). 
In the third part, for $f$ \qlinkcomp, we apply this in concert with Proposition~\ref{numberofgeneratorsofJ} to deduce Theorem~\ref{gradedBetti}, 
giving precise graded Betti numbers and the regularity and hence the stability of Betti numbers of Frobenius powers of the maximal ideal. 
In the fourth part, again for $f$ \qlinkcomp, we use these results to compute the Hilbert-Kunz function of the maximal ideal in Theorem~\ref{HKfunction}. 

We note that in the first two parts we develop resolutions for {\it any} graded complete intersection ideal, not just $(x^q,y^q,z^q)$. Then in the last two parts we specialize to the case of $(x^q,y^q,z^q)$ for our applications. 

\begin{ntn}
Let $d=\deg f$ and let $q$ be a power of $p$.
In this section we work with the following graded ideals of the polynomial ring $P$: the complete intersection ideal 
\[
\mathfrak{c}=(c_1,c_2,c_3)
\]
where $\deg c_i=r_i$
and the linked ideals 
\[
I=\mathfrak{c}+(f)
\qquad {\textrm{and}} \qquad 
J=(\mathfrak{c}:I)
\] 
Since $I$ is an almost complete intersection ideal, $P/J$ is a Gorenstein ring. 

Later (in Parts 3 and 4 of this section) we will specialize to the case
\[
\mathfrak{c}=({\bf x}^q)=(x^q,y^q,z^q)
\] 
and the linked ideals 
\[
I_q=({\bf x}^q)+(f)=(x^q,y^q,z^q,f) 
\qquad {\textrm{and}} \qquad 
J_q=(({\bf x}^q):I_q)
\] 
so that we can apply the results from Section~\ref{sec:socle}. 

Assume $c_1,c_2,c_3$ are part of a minimal generating set for $J$.
(This holds automatically in the special case where $J=J_q$ is $(x^q,y^q,z^q)$-compressed and $q \geq d+3$. See Remark \ref{rem-qvsd} for details.)
Fix a set of minimal generators for $J$ as follows 
\[
J=(c_1,c_2,c_3,w_1,\dots,w_m)
\qquad {\textrm{so that}}
\qquad 
m=\mu(J)-3.
\] 
 
\end{ntn}

\vspace{3mm}
\centerline{\bf Part 1: Resolutions over $P$}
\vspace{3mm}

In this section we construct the $P$-free resolution of $P/I$ in terms of that of $P/J$ via the theory of linkage. 
We follow the ideas and techniques from Avramov \cite{Av81}  using the dg algebra structure defined by Buchsbaum and Eisenbud in \cite[\S 4]{BE-77} to derive explicit formulas for our particular situation; see also \cite[Section 8.4]{Av78}. Note also that the results are similar, but derived in the opposite order, to those in \cite[Lemma 2.3]{KRV-12}. 

First recall that since $P/J$ is Gorenstein of codimension three and its resolution is given by the well-known Buchsbaum-Eisenbud structure theorem. 
For the reader's convenience, we now modify the proof given in \cite[Theorem 3.4.1b]{bruns-herzog} to make it valid in the graded setting and for the order of generators for $J$ that we prefer.  

\begin{lem}[Buchsbaum-Eisenbud \cite{BE-77}, graded version]
\label{lem-brunsherzog}
\ \\ 
There is a graded $P$-free resolution of $P/J$ of the form
\[
0 \to P \xrightarrow{\partial_3} P^3\oplus P^m \xrightarrow{\partial_2} P^3\oplus P^m \xrightarrow{\partial_1} P \to 0,
\]
where 
\[
\partial_1=[c_1,c_2,c_3,w_1,w_2,\cdots,w_m], \qquad \partial_3=\partial_1^T, 
\]
and the map $\partial_2$ is given by an skew-symmetric matrix, which we can describe as
\[
X=
\begin{bmatrix} 
* & -\psi^T \\ 
\psi & \phi 
\end{bmatrix},
\]
where $*$ is a $3 \times 3$ block, $\psi$ is $m \times 3$, and $\phi$ is $m \times m$, and furthermore, the $j$th entry of $\partial_1$ equals $(-1)^{j+1}\Pf_j(X)$ for each $j=1,\dots,m+3$. 
\end{lem}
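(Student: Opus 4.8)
The plan is to follow the classical Buchsbaum--Eisenbud structure theorem (\cite[\S4]{BE-77}, \cite[Theorem 3.4.1]{bruns-herzog}), inserting graded twists throughout and reordering the generators of $J$ so that the regular sequence $c_1,c_2,c_3$ comes first. First I would pin down the shape of the resolution. Since $J$ is linked to the almost complete intersection $I$, the ring $P/J$ is Artinian Gorenstein of codimension $3$; by Auslander--Buchsbaum $\operatorname{pd}_P(P/J)=3$, and by graded local duality (using that $P/J$ is Gorenstein) $\Ext^3_P(P/J,P)\cong P/J$ up to a graded shift. Hence the minimal graded free resolution $F_\bullet\colon 0\to F_3\to F_2\to F_1\to F_0=P\to 0$ has $F_3\cong P(-\sigma)$ of rank $1$ (type one), and $\Hom_P(F_\bullet,P)(-\sigma)$ is again a minimal graded free resolution of $P/J$. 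Uniqueness of minimal resolutions then supplies a graded isomorphism of complexes $F_\bullet\xrightarrow{\ \sim\ }\Hom_P(F_\bullet,P)(-\sigma)$; in particular $F_1\cong\Hom_P(F_2,P)(-\sigma)$, so setting $n:=\rank F_1=\rank F_2$ we have a single rank $n$.

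Next I would choose bases carefully. Take the basis of $F_1$ dual to the chosen minimal generators $c_1,c_2,c_3,w_1,\dots,w_m$ of $J$, so that $\partial_1=[c_1,c_2,c_3,w_1,\dots,w_m]$ and $n=m+3$. Using the self-duality isomorphism, one arranges (exactly as in \cite[proof of Theorem 3.4.1]{bruns-herzog}) the basis of $F_2$ and the generator of $F_3$ so that \emph{simultaneously} $\partial_3=\partial_1^T$ and the matrix $X$ of $\partial_2$ is self-dual with respect to that isomorphism; this forces $X^T=-X$ with zero diagonal, i.e.\ $X$ is alternating (the only delicate point, also handled there, being the sign/normalisation in characteristic $2$). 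Since the first three basis vectors of $F_1$ are singled out, an alternating $X$ automatically has the block form $X=\left[\begin{smallmatrix}*&-\psi^T\\\psi&\phi\end{smallmatrix}\right]$ with $*$ a $3\times 3$ alternating block, $\psi$ of size $m\times 3$, and $\phi$ an $m\times m$ alternating block. From exactness, $\ker(\partial_2)=\im(\partial_3)\cong P(-\sigma)$ is free of rank one, spanned in these bases by the column $[c_1,c_2,c_3,w_1,\dots,w_m]^T$; thus $\rank X=n-1$, and since an alternating matrix has even rank, $n$ is odd.

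For the Pfaffian identity I would invoke the standard ``Laplace'' identity for odd-size alternating matrices: the column $e$ with $e_j=(-1)^{j+1}\Pf_j(X)$ satisfies $Xe=0$. Therefore $e\in\ker X=P\cdot[c_1,\dots,w_m]^T$, so $(-1)^{j+1}\Pf_j(X)=g\,a_j$ for a single homogeneous $g\in P$, where $(a_1,\dots,a_n)=(c_1,c_2,c_3,w_1,\dots,w_m)$. To see that $g$ is a unit, note that since $F_\bullet$ is exact the Buchsbaum--Eisenbud acyclicity criterion forces the ideal of size-$(n-1)$ minors of $X$ to have grade $\ge 2$, and this ideal has the same radical as $(\Pf_1(X),\dots,\Pf_n(X))=(e_1,\dots,e_n)=gJ$; hence $\operatorname{ht}(gJ)\ge 2$, whereas $\operatorname{ht}J=3$, so if $g$ were a non-unit then $V(gJ)\supseteq V(g)$ would have a height-one component, a contradiction. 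Thus $g$ is a homogeneous unit, i.e.\ a nonzero scalar, and rescaling the self-duality isomorphism (equivalently, absorbing the scalar after passing to $\bar k$) normalises $g=1$, yielding $a_j=(-1)^{j+1}\Pf_j(X)$ for all $j$.

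The mathematics here is entirely classical; the substance of the proof is careful bookkeeping. The step I expect to require the most care is the second one: matching the self-duality isomorphism against the \emph{prescribed} ordering of the generators of $J$ so that one obtains $\partial_3=\partial_1^T$ and an \emph{alternating} (not merely skew-symmetric) matrix $X$ at the same time, together with the characteristic-$2$ normalisation. This is precisely the portion of \cite[Theorem 3.4.1]{bruns-herzog} being adapted, now carried out with graded shifts throughout — which is harmless, since taking duals, forming mapping cones, and expanding Pfaffians are all degree-homogeneous operations.
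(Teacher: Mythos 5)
Your overall plan is the one the paper follows — adapt the Buchsbaum--Eisenbud structure theorem, in the form of \cite[Theorem 3.4.1(b)]{bruns-herzog}, to the graded setting with the generators ordered so that $c_1,c_2,c_3$ come first — but the step that actually produces the alternating matrix is not justified as you describe it. You obtain a chain isomorphism $F\to\Hom_P(F,P)(-\sigma)$ from graded local duality and uniqueness of minimal resolutions, and then assert that one can ``arrange bases as in \cite{bruns-herzog}'' so that $X$ is self-dual and hence alternating. An arbitrary isomorphism of complexes does not give this: it only says that $X^T$ agrees with $X$ after composing with the two (unknown) isomorphisms $t_1,t_2$ in homological degrees $1$ and $2$, and no change of bases extracts skew-symmetry, let alone zero diagonal, from that alone. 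The essential input in \cite{bruns-herzog} --- and the one the paper's proof invokes explicitly --- is the (possibly non-associative) multiplicative structure on the length-three resolution (\cite[3.4.3]{bruns-herzog}): the isomorphism $t$ is taken to be $\pm$ the adjoints $s_i$ of the induced pairings $F_i\otimes F_{3-i}\to F_3$ (\cite[3.4.4]{bruns-herzog}), and it is graded commutativity together with $e_i\cdot e_i=0$, after choosing the dual basis $f_1,\dots,f_{m+3}$ of $F_2$ with $f_ie_j=\delta_{ij}g$, that forces $X$ to be alternating. A cheap symmetrization of an arbitrary $t$ (say $\tfrac12(t+t^*)$) is unavailable in characteristic $2$ — a case the paper needs, since $p=2$ with $d$ odd is allowed — and zero diagonal would still require a separate argument. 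So the key idea of the paper's proof, the DG-algebra structure, is missing from your proposal; you cite the right source, but the mechanism you describe (uniqueness of minimal resolutions plus basis bookkeeping) would not deliver the conclusion.

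Your treatment of the Pfaffian identity, on the other hand, is a correct and genuinely alternative route to the part the paper simply delegates to \cite[p.~124]{bruns-herzog}: the Laplace relation $Xe=0$ for $e_j=(-1)^{j+1}\Pf_j(X)$, the identification $\ker\partial_2=\im\partial_3\cong P(-\sigma)$, and the acyclicity/grade argument (using that $I_{n-1}(X)$ and the submaximal Pfaffian ideal have the same radical) showing the proportionality factor $g$ is a unit are all fine. One caveat: the final normalisation $g=1$ cannot be achieved ``after passing to $\bar k$'' if the statement is to hold over $k$ with the prescribed generators; with $\partial_1$ held fixed the admissible congruences change the Pfaffians only by $\lambda^{(m+2)/2}$-type factors, so removing $g$ requires a root in $k$. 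In the multiplicative-structure proof this issue does not arise, because the identification $F_3\cong P$ is normalised through the same pairings that produce $X$; in your approach you should either state the conclusion up to a unit (which is all the paper ever uses later, compare $\Pf(\phi)=vf$ in Proposition~\ref{I-Presn} and Theorem~\ref{gradedBetti}) or absorb the unit into the choice of the $w_i$.
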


\begin{proof}
Take a graded minimal free resolution $F$
\[0 \to F_3 \to F_2 \to F_1 \to F_0 \to 0\] 
of $P/J$, fixing bases $e_1,\ldots,e_{m+3}$ and $c$ of $F_1$ and $F_0=R$, respectively, so that the $e_i$ map to the generators $c_1,c_2,c_3,w_1,\ldots,w_m$ of $J$.  By  \cite[3.4.3]{bruns-herzog}, we can assume there is a (possibly non-associative) graded product on $F$. In particular, we get pairings 
\[F_i \otimes F_{3-i} \to F_3 \cong P,\]
and by tensor-hom adjointness this gives us graded homomorphisms 
\[
s_i: F_i \to \Hom_P(F_{3-i},F_3) \cong F_{3-i}^*.
\]
By  \cite[3.4.4]{bruns-herzog}, we get an isomorphism of complexes $t:F \to F^*$ where $t_i=\pm s_i$. 
\\
Consequently, the elements  $s_1(e_1),\ldots,s_1(e_m)$ form a basis for $F_2^*$. Let $f_1,\ldots,f_m$ be a basis for $F_2$ such that $f_i^*=s_1(e_i)$. By construction and by the properties given in \cite{bruns-herzog}, we have $f_ie_j=\delta_{ij}g$, where $g$ is the basis element for $F_3$ such that $s_3(g) \in F_0^*$ is the dual of $c \in F_0$. The rest of the proof follows as in \cite[Page 124]{bruns-herzog}. 
\end{proof}

\begin{dfn} 
\label{def-Pfaffian-adjoint}
Let $X$ be a skew-symmetric matrix. We denote by $Pf_{i_1\cdots i_j}(X)$ the Pfaffian of the matrix obtained from $X$ by removing the rows and columns indexed by $i_1,\dots, i_j$.

The Pfaffian adjoint of $X$, denoted, $X^\vee$, is the skew-symmetric matrix with entries
\[X^\vee_{i,j}:= \begin{cases}
    (-1)^{i+j}Pf_{i j}(X) & i<j \\
    0 & i=j \\
    (-1)^{i+j+1}Pf_{i j}(X) & i>j. \\
    \end{cases}
    \]
\end{dfn}
 
\begin{dfn} 
\label{def-BE-dga}
Buchsbaum and Eisenbud define a dg-algebra structure on their resolutions  \cite[\S 4]{BE-77}. The formulas that we recall here are taken from \cite[Section 8.4]{Av78}): 
Let $\{C\}$, $\{B_i\}_{i=1}^{m+3}$ and $\{A_i\}_{i=1}^{m+3}$ be the standard bases for the free modules, respectively from left to right, in the resolution in Lemma~\ref{lem-brunsherzog}. For $1 \leq i,j \leq m+3$, define  the products
\begin{align*}
    A_iA_j&=\sum_{p=1}^{2d+3} \sigma_{ijp}\Pf_{ijp}(X)B_p\\
    B_hA_i&=A_iB_h-\delta_{ih}C
\end{align*}
where $\sigma_{ijp}$ is the sign of the permutation sending the numbers $1,2,\ldots,m+3$ to the numbers $i,j,p$ followed by the remaining numbers in increasing order if $i,j,$ and $p$ are distinct, and 0 otherwise. These induce a dg-algebra structure on the minimal free resolution of $P/J$. 
\end{dfn}
 
In order to find a resolution of $P/I$ via linkage theory, as developed algebraically in \cite{PesSzp-74}, we need an explicit map from the Koszul complex on the complete intersection ideal $\mathfrak{c}$ to the resolution of $P/J$ lifting the natural surjection $P/\mathfrak{c}\onto P/J $. 
We will use the dg-algebra structure defined above to achieve this. 

\begin{prp}
\label{prp-complexmap}
Let $F$ be the free resolution of $P/J$ from Lemma \ref{lem-brunsherzog} with $\psi$ and $\phi$ as defined there. 
Let $K$ be the Koszul complex  $K(c_1,c_2,c_3;P)$. 
Then there is a lifting of the identity map $K_0 \xra{=} F_0$ to a map of complexes $\alpha\colon K \to F$ such that in the standard basis of the Koszul complex and the basis for $F$ given in Lemma~\ref{lem-brunsherzog}:
\begin{enumerate}
\item $\alpha_1$ is given by the matrix \[
\begin{bmatrix}
\iden_3 \\
0 
\end{bmatrix}
\]
where $\iden_3$ is the $3\times3$ identity matrix,
\item $\alpha_2$ is given by the product of matrices 
\[
\begin{bmatrix}
\Pf(\phi)\iden_3 \\
-\phi^\vee\psi 
\end{bmatrix}
[\widetilde{\iden}_3]
\]
where $\widetilde{\iden}_3$ is the $3\times3$ identity matrix with middle entry changed to $-1$, and 
\item $\alpha_3$ is multiplication by $\Pf(\phi)$. 
\end{enumerate}
\end{prp}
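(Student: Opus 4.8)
The plan is to construct the chain map $\alpha\colon K \to F$ degree by degree, starting from $\alpha_0 = \iden$ and lifting successively, then to verify that the explicit formulas in (1)--(3) do define a chain map, i.e.\ that they commute with the differentials $\partial_i^K$ of the Koszul complex and $\partial_i = \partial_i^F$ of Lemma~\ref{lem-brunsherzog}. Since $K_0 = F_0 = P$ and the surjection $P/\fc \onto P/J$ is induced by the identity on $P$ (both ideals sit inside $P$ and $\fc \subseteq J$), such a lift exists by the standard comparison theorem for free resolutions; the content of the proposition is that it can be taken in the stated closed form.

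First I would pin down $\alpha_1$. We need $\partial_1 \circ \alpha_1 = \alpha_0 \circ \partial_1^K$, where $\partial_1^K = [c_1,c_2,c_3]$ and $\partial_1 = [c_1,c_2,c_3,w_1,\dots,w_m]$; the matrix $\begin{bmatrix}\iden_3 \\ 0\end{bmatrix}$ visibly does this, since composing picks out exactly the first three entries of $\partial_1$. Next, for $\alpha_2$ I would use the dg-algebra structure from Definition~\ref{def-BE-dga}: the key algebraic identity is the Pfaffian ``Jacobi/Laplace'' relation, which in matrix form says that the product of the lower-right $m\times m$ block $\phi$ with its Pfaffian adjoint $\phi^\vee$ satisfies $\phi^\vee \phi = \phi \phi^\vee = \Pf(\phi)\iden_m$ (this is exactly the analogue for alternating matrices of $\operatorname{adj}(M)M = \det(M)\iden$, and it is why the Pfaffian adjoint is introduced in Definition~\ref{def-Pfaffian-adjoint}). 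Combining this with the block decomposition $X = \begin{bmatrix} * & -\psi^T \\ \psi & \phi\end{bmatrix}$ and the expansion of $\Pf_j(X)$ along rows/columns, one checks that the proposed $\alpha_2 = \begin{bmatrix}\Pf(\phi)\iden_3 \\ -\phi^\vee\psi\end{bmatrix}[\widetilde{\iden}_3]$ satisfies both $\partial_2 \circ \alpha_2 = \alpha_1 \circ \partial_2^K$ and $\partial_3^K \circ (\text{something}) = \alpha_2 \circ \partial_3^K$ up to the compatibility with $\alpha_3$. The factor $\widetilde{\iden}_3$ (identity with middle sign flipped) and the sign conventions $\sigma_{ijp}$ are precisely the bookkeeping needed to match the alternating signs $(-1)^{j+1}\Pf_j(X)$ appearing in $\partial_1$ and the signs built into $\partial_2^K$.

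Finally, $\alpha_3$ is forced to be multiplication by some scalar polynomial of the correct degree, since $K_3 \cong F_3 \cong P$; requiring $\partial_3 \circ \alpha_3 = \alpha_2 \circ \partial_3^K$ (equivalently, compatibility at the top, using $\partial_3 = \partial_1^T$) together with the computation of $\partial_3^K$ against the second row $-\phi^\vee\psi$ of $\alpha_2$ pins the scalar down to $\Pf(\phi)$; here one again invokes $\phi\phi^\vee = \Pf(\phi)\iden_m$ and the relation between $\psi$, $\phi$ and the $c_i$ coming from the lower block rows of $\partial_2 = X$. The main obstacle I expect is the sign bookkeeping: reconciling the Koszul signs, the permutation signs $\sigma_{ijp}$ in the dg-structure, the $(-1)^{i+j}$ in the Pfaffian adjoint, and the alternating signs in $\partial_1$, which is exactly why the statement carries the slightly awkward $\widetilde{\iden}_3$ correction. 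A clean way to organize this is to verify the chain-map identities one row-block at a time (the ``$\iden_3$'' block versus the ``$\psi,\phi$'' block), reducing everything to the single Pfaffian adjoint identity plus Pfaffian expansion formulas, which are standard (e.g.\ as used in \cite{BE-77}); the degree statements ($\alpha_3$ has degree equal to that of $\Pf(\phi)$, etc.) then follow automatically from homogeneity of the resolution in Lemma~\ref{lem-brunsherzog}.
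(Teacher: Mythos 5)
Your proposal is logically sound in outline but takes a genuinely different route from the paper, and it glosses over the one step that requires real work. You frame the argument as: (i) a lift exists by the comparison theorem, and (ii) one \emph{verifies} that the stated formulas commute with the differentials. The paper instead exploits that the Koszul complex $K$ is the \emph{free} graded-commutative dg $P$-algebra on $T_1,T_2,T_3$, so once one declares $\alpha(T_i)=A_i$ there is a \emph{unique} extension to a dg-algebra map $K\to F$, and such a map is \emph{automatically} a chain map --- no commutation checks needed. The formulas for $\alpha_2$ and $\alpha_3$ then simply fall out of computing the products $A_iA_j$ and $A_1A_2A_3$ using the Buchsbaum--Eisenbud multiplication table of Definition~\ref{def-BE-dga} and Pfaffian row/column expansion of the minors $\Pf_{12p}(X)$. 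This is the key simplification your write-up misses: you mention the dg-algebra structure, but you use it only as a source of Pfaffian identities for a chain-map verification rather than as the device that renders the verification unnecessary.

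The concrete gap is in your ``one checks that the proposed $\alpha_2$ satisfies $\partial_2\circ\alpha_2=\alpha_1\circ\partial_2^K$.'' Writing $\partial_2=X=\begin{bmatrix}*&-\psi^T\\\psi&\phi\end{bmatrix}$, the bottom block of $\partial_2\alpha_2$ is indeed killed by the Pfaffian adjoint identity $\phi\phi^\vee=\Pf(\phi)\iden_m$, which you invoke. But the top block demands the identity
\[
\bigl(*\,\Pf(\phi)\;+\;\psi^T\phi^\vee\psi\bigr)\widetilde{\iden}_3 \;=\; \partial_2^K,
\]
a $3\times 3$ alternating-matrix identity whose entries involve the submaximal Pfaffians $\Pf_{ij}(X)$ and is \emph{not} a consequence of $\phi\phi^\vee=\Pf(\phi)\iden_m$ alone. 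The relation $\partial_1\partial_2=0$ only gives $\mathbf{c}\cdot\bigl(*\,\Pf(\phi)+\psi^T\phi^\vee\psi\bigr)=0$, which says the alternating matrix on the left has its ``Pfaffian vector'' proportional to $(c_1,c_2,c_3)$ but does not pin down the proportionality factor. Establishing it requires the same Pfaffian row/column expansion of $\Pf_{12p}(X)$ that the paper carries out --- the step your proposal compresses to ``one checks.'' Either you should carry out that expansion explicitly, or (cleaner) adopt the paper's viewpoint: define $\alpha$ as the unique dg-algebra map extending $T_i\mapsto A_i$, so the chain-map property is free, and the only computation is the products $A_1A_2$ and $A_1A_2A_3$.
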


\begin{proof}
Consider the Koszul complex $K=K(c_1,c_2,c_3;P)$ 
\[0 \lra P \xra{d_3} P^3 \xra{d_2} P^3 \xra{d_1} P \to 0\]
resolving $P/\mathfrak{c}$ over $P$. 
By using the dg-algebra structures of the two resolutions $K$ and $F$, we will construct a map of complexes $\alpha\colon K \to F$ given by 
\[
 \xymatrixrowsep{1.5pc} \xymatrix{
0\ar[r]&P\ar[r]^{d_3}\ar[d]^{\alpha_3}&P^3\ar[r]^{d_2}\ar[d]^{\alpha_2}&P^3\ar[r]^{d_1}\ar[d]^{\alpha_1}&P\ar[r]\ar[d]_{=}^{\alpha_0}&0
\\
0 \ar[r]&P\ar[r]^{\partial_3}&P^3\oplus P^m\ar[r]^{\partial_2}&P^3\oplus P^m\ar[r]^{\partial_1}&P\ar[r]&0}
\]
that lifts the identity map in homological degree 0. 

First, let $T_1,T_2,T_3$ be the basis elements of $K_1$ that are sent to $c_1,c_2,c_3$ in $P$. 
To make the right hand square commute, set $\alpha(T_i)=A_i$ for $i=1,2,3$. This determines the map $\alpha_1$  whose matrix is
\[\begin{bmatrix}
\iden_3 \\
0 \\
\end{bmatrix}\]
where $\iden_3$ is the $3 \times 3$ identity matrix.

Then, as the Koszul complex is a free graded  dg algebra on $T_1, T_2, T_3$ and $F$ is a dg algebra with structure as described in Definition~\ref{def-BE-dga}, this map extends uniquely to a map of dg algebras $\alpha\colon K \to F$.
We now compute it explicitly on the bases $\{T_1T_2,T_1T_3,T_2T_3\}$ in degree two and $T_1T_2T_3$ in degree three.  

First we find $\alpha_3$.  Since $\alpha$ is a dg-algebra map, we have 
\[
\alpha_3(T_1T_2T_3)=\alpha_1(T_1)\alpha_1(T_2)\alpha_1(T_3)=A_1A_2A_3.
\]
Using the dg algebra structure on $F$ described in Definition~\ref{def-BE-dga}, we find that 
\begin{equation}
\begin{aligned}
\alpha_3(T_1T_2T_3)&=A_1A_2A_3=
\left(\sum_{p=1}^{m+3}\sigma_{12p}
\Pf_{12p}(X)B_p\right) A_3 \\
&=\sum_{p=1}^{m+3}
\sigma_{12p}\Pf_{12p}(X)\delta_{p3}C
=\sigma_{123}\Pf_{123}(X)C \\
&=\Pf_{123}(X)C=\Pf(\phi)C.
\end{aligned}
\end{equation}

Similarly, to find $\alpha_2$ we compute 
\[\alpha_2(T_1T_2)=\alpha_1(T_1)\alpha_1(T_2)=A_1A_2=\sum_{p=1}^{m+3} \sigma_{12p}\Pf_{12p}(X)B_p.\]
To find this sum we first compute the  the first nonzero term: 
\[\sigma_{123} \Pf_{123}(X)B_p= \Pf_{123}(X)B_p= \Pf(\phi)B_p,\]
For the remaining terms we compute $\Pf_{12p}$ for $p\geq 4$. Expanding along the third row and column, we get 
\[
\Pf_{12p}(X) = \sum^ {m+3}_{\substack{  \ell =4  \\ \ell \ne p}} (-1)^{\tilde{\ell}} X_{3\ell}\Pf_{123\ell p}(X),\]
where $\tilde{\ell}=\ell$ if $\ell <p$ and $\tilde{\ell}=\ell-1$ if $\ell > p$ and  $X_{3\ell}$ is the $(3,\ell)$ entry of $X$. Thus 

\[\Pf_{12p}(X) = \sum^ {m+3}_{\substack{  \ell =4  \\ \ell \ne p}} (-1)^{\tilde{\ell}} \Pf_{\ell p}(\phi),\]
 This simplifies to
\[\sum^ {m+3}_{\substack{  \ell =4}} (-1)^{\tilde{\ell}}X_{3\ell}(-1)^{p+\ell}(\phi^\vee)_{\ell p}.\]
where $\phi^\vee$ is the adjoint of $\phi$ and, by definition, 
\begin{equation*}
(\phi^\vee)_{\ell p} =
   \begin{cases}
     \Pf_{\ell p}(\phi) & \textrm{if}~\ell < p\\
     0 & \textrm{if}~\ell=p\\
     -\Pf_{\ell p}(\phi)& \textrm{if}~\ell > p.
    \end{cases} 
\end{equation*}
Hence one has
\[ \Pf_{12p}(X) =\sum^ {m+3}_{\substack{  \ell =4 }} (-1)^p(X_{3\ell})(\phi^\vee)_{\ell p}.\]
Notice that $(X_{3\ell})_{\ell=4}^{m+3}$ is the 3rd row of $-\psi^T$, whereas $\left((\phi^\vee)_{\ell p}\right)_{\ell=4}^{m+3}$ is the $p$th column of $\phi^\vee$. Hence this simplifies to
\[ \Pf_{12p}(X)=(-1)^p(-\psi^T\phi^\vee)_{3p}.\]

Hence 
\[
\begin{aligned}
\alpha_2(T_1T_2) &=\Pf(\phi)B_3+\sum_{p=4}^{m+3} \sigma_{12p}(-1)^p(-\psi^T\phi^\vee)_{3p}B_p \\
&=\Pf(\phi)B_3+\sum_{p=4}^{m+3} (\psi^T\phi^\vee)_{3p}B_p \\
&=\Pf(\phi)B_3+\sum_{p=4}^{m+3} (-\phi^\vee \psi)_{p3}B_p,
\end{aligned}
\]
where the last equality follows because $(\psi^T\phi^\vee)^T=(\phi^\vee)^T\psi=-\phi^\vee\psi$. This gives the third column of the matrix of $\alpha_2$.

The computations of $\alpha_2(T_1T_3)$ and $\alpha_2(T_2T_3)$ are similar, except that for the former one gets an extra minus sign.
Hence the matrix of the map $\alpha_2:P^3 \to P^3\oplus P^m$ is
\[
\begin{bmatrix}
\Pf(\phi)\iden_3 \\
-\phi^\vee \psi 
\end{bmatrix}
[\widetilde{\iden}_3]
\]
where $\widetilde{\iden}_3$ is the $3\times3$ identity matrix with middle entry changed to $-1$.  
\end{proof}

Now we use the explicit map from Proposition~\ref{prp-complexmap} to obtain the resolution of $P/I$ from that of the linked ring $P/J$. 
Recall that $\phi, \psi$ are the blocks appearing in the middle matrix of the resolution of $P/J$ as defined in Lemma~\ref{lem-brunsherzog}.

\begin{prp}
\label{I-Presn}
Assume that $r_i > d$ for $i=1,2,3$.
The minimal homogeneous resolution $G$ of $P/I$ over $P$ is of the form
\[
0 \lla P 
\xla{
\begin{bmatrix}
{\bf c} & \!\! vf 
\end{bmatrix}
} 
\begin{matrix}
P^3 \\\oplus\\ P
\end{matrix}
\xla{
\begin{bmatrix}
\psi^T\phi^\vee & vf\iden_3 \\
-{\bf w} & -{\bf x}^q
\end{bmatrix}
} 
\begin{matrix}
P^m \\\oplus\\ P^3
\end{matrix}
\xla{
\begin{bmatrix}
\phi \\ -\psi^T 
\end{bmatrix}
} 
P^m \lla 0
\]
with $\Pf(\phi)=vf$ for some unit $v\in k$, $[{\bf c}]=[c_1 \ c_2 \ c_3]$ and $[{\bf w}]=[w_1~\cdots~w_m]$. 
\end{prp}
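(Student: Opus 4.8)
The plan is to invoke the theory of linkage, in the algebraic form of Peskine--Szpiro \cite{PesSzp-74} as used by Avramov \cite{Av81} (see also \cite[\S 8.4]{Av78}). Since $I=(\mathfrak{c}:J)$ and $J=(\mathfrak{c}:I)$ are linked through the complete intersection $\mathfrak{c}=(c_1,c_2,c_3)$, since $K=K(c_1,c_2,c_3;P)$ is the Koszul resolution of $P/\mathfrak{c}$, and since $F$ is the resolution of $P/J$ in Lemma~\ref{lem-brunsherzog}, the mapping cone of the dualized comparison map
\[
\Hom_P(\alpha,P)\colon \Hom_P(F,P)\lra \Hom_P(K,P)
\]
is a (generally non-minimal) $P$-free resolution of $P/I$, up to a grading twist by $r_1+r_2+r_3$; here $\alpha\colon K\to F$ is the explicit chain map of Proposition~\ref{prp-complexmap}. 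The justification is the usual one: $\Hom_P(K,P)$ read backwards is again the Koszul resolution of $P/\mathfrak{c}$; $\Hom_P(F,P)$ read backwards resolves $\Ext^3_P(P/J,P)$, which by liaison is isomorphic, up to twist, to $I/\mathfrak{c}$ because $P/J$ is Gorenstein; and $\Hom_P(\alpha,P)$ covers the inclusion $I/\mathfrak{c}\hookrightarrow P/\mathfrak{c}$, whose cokernel is $P/I$.

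I would then write this mapping cone explicitly, feeding in the formulas $\alpha_0=\iden$, $\alpha_1$, $\alpha_2$, $\alpha_3=\Pf(\phi)$ from Proposition~\ref{prp-complexmap}, together with the differentials $\partial_1=[\,{\bf c}\ \ {\bf w}\,]$, $\partial_2=X$ (the alternating matrix of Lemma~\ref{lem-brunsherzog} with blocks $*$, $\psi$, $\phi$), $\partial_3=\partial_1^T$ of $F$, and the Koszul differentials of $K$. One gets a complex of free modules of ranks
\[
0\lra P\lra P^{m+3}\oplus P\lra P^{m+3}\oplus P^{3}\lra P\oplus P^{3}\lra P\lra 0 .
\]

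Next I would perform the two cancellations of redundant summands forced by the identity blocks in this cone (Gaussian elimination): the copy of $\iden\colon P\to P$ arising from $\alpha_0^T$ cancels the leftmost $P$ against a summand $P$ of $P^{m+3}\oplus P$, and the split surjection $\alpha_1^T=[\,\iden_3\ \ 0\,]$ cancels a copy of $P^{3}$, leaving $P^{m}$ in that homological degree and $P^{m}\oplus P^{3}$ in the next. This produces a complex of free modules of ranks $0\to P^{m}\to P^{m}\oplus P^{3}\to P^{3}\oplus P\to P\to 0$, as in the statement. Tracking the induced Schur-complement differentials through the two eliminations, and using the identities $(\phi^\vee)^T=-\phi^\vee$ and $(\psi^T\phi^\vee)^T=-\phi^\vee\psi$ noted after Definition~\ref{def-Pfaffian-adjoint}, yields precisely the displayed matrices (up to the sign conventions already built into $X$ and into $\alpha$).

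Finally I would identify $\Pf(\phi)$ with $vf$ and verify minimality. The four minimal generators of $I$ produced by the cone are $c_1,c_2,c_3$ and $\pm\Pf(\phi)$; since $I=\mathfrak{c}+(f)$ with $\deg f=d<r_i$, the ideal $\mathfrak{c}$ contains no nonzero homogeneous element of degree $\le d$, so the generator $\Pf(\phi)$, which sits in degree $d$, must equal $vf$ for a scalar $v$, with $v\ne 0$ since $\Pf(\phi)$ is a genuine minimal generator. Minimality of $G$ is then immediate, because after the cancellations every entry of the differentials lies in $\mathfrak{m}$: the $c_i$ and $f$ have positive degree, and the entries of $\phi$, $\psi$, $\psi^T\phi^\vee$ do because $F$ is the \emph{minimal} resolution of $P/J$. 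I expect the main obstacle to be the bookkeeping in the second cancellation — obtaining the off-diagonal block exactly as $\psi^T\phi^\vee$ and the remaining blocks with their stated signs — since it is there that the less symmetric ingredients of Proposition~\ref{prp-complexmap} (the factor $\widetilde{\iden}_3$ with a $-1$ in the middle, and the $*$-block of the alternating matrix $X$) interact; pinning down the grading twist $r_1+r_2+r_3$ is a minor secondary point that does not affect the module maps.
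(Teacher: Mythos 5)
Your proposal follows essentially the same route as the paper's proof: dualize the comparison map $\alpha\colon K\to F$ of Proposition~\pgref{prp:complexmap}, take the mapping cone (Peskine--Szpiro linkage), cancel the two identity blocks coming from $\alpha_0^*$ and $\alpha_1^*$, identify $\Pf(\phi)$ with $vf$ by the degree argument using $d<r_i$, and read off minimality from the minimality of $F$. The only point stated more loosely than it need be is that $\Pf(\phi)$ ``sits in degree $d$''---in the paper this is extracted from $\Pf(\phi)$ and $f$ generating the same ideal modulo $\fc$ together with homogeneity, which is exactly the bookkeeping your twist-tracking would supply.
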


\begin{proof}
As  the ideals $I$ and $J$ are linked via the complete intersection ideal $\mathfrak{c}$, 
we use the method developed in \cite{PesSzp-74} to go from a resolution of $P/J$ to one of $P/I$. 

Starting with the map of complexes from Proposition \ref{prp-complexmap} 
we take the mapping cone and dualize it to get a resolution $G$ of $P/I$ over $P$ as the cone of
\[
 \xymatrixrowsep{1.5pc} \xymatrix{
0
&P^*\ar[l]
&(P^3)^*\ar[l]_{d_3^*}
&(P^3)^*\ar[l]_{d_2^*}
&P^*\ar[l]_{d_1^*}
&0\ar[l]
\\
0 
&P^*\ar[l]\ar[u]^{\alpha_3^*}
&(P^3)^*\oplus (P^m)^*\ar[l]_{\partial_3^*}\ar[u]^{\alpha_2^*}
&(P^3)^*\oplus (P^m)^*\ar[l]_{\partial_2^*}\ar[u]^{\alpha_1^*}
&P^*\ar[l]_{\qquad \partial_1^*}\ar[u]^{\alpha_0^*}_{=}
&0\ar[l]
}
\]
Now the non-minimal pieces of the resolution can be removed as follows:
$\alpha_0^*$ is the identity map, so those copies of $P$ are removed. The map $\alpha_1^*$ identifies the two copies of $P^3$ so those can be removed as well. 
From the formulas in Proposition~\ref{prp-complexmap}, we see that the maps $\alpha_2$ and $\alpha_3$ are minimal.
This yields a minimal resolution as the cone of the following diagram, since the matrix of $\partial_2^*$ equals $X^T=-X=
{\small 
\begin{bmatrix} -* & \psi^T \\ -\psi & -\phi \\
\end{bmatrix}
}
$. 

\[
 \xymatrixrowsep{1.5pc} \xymatrixcolsep{3.5pc}\xymatrix{
0
&P^*\ar[l]
&(P^3)^*\ar[l]_{d_3^*}
&
&
\\
0 
&P^*\ar[l]\ar[u]^{\alpha_3^*}
&(P^3)^*\oplus (P^m)^*\ar[l]_{
\tiny
\begin{bmatrix}
{\bf c} & {\bf w}
\end{bmatrix}
}
\ar[u]^{\alpha_2^*}
&(P^m)^*\ar[l]_{
\tiny 
\begin{bmatrix}
\psi^T \\ -\phi 
\end{bmatrix}
}
&0\ar[l]
}
\]

In particular, the sum of the images of $\alpha_3^*$ and $d_3^*$ must equal $I=(f)+\mathfrak{c}$. 
By Proposition~\ref{prp-complexmap}, the map  $\alpha_3^*$ is multiplication by $\Pf(\phi)$ and $d_3^*$ is given by the matrix $[c_1,-c_2,c_3]$ in the dual of the usual basis for the Koszul complex. 
Hence $\Pf(\phi)$ and $f$ must generate the same ideal modulo $\mathfrak{c}$, so we must have $\Pf(\phi)=vf+h_1c_1+h_2c_2+h_3c_3$ for some unit $v \in P$. But by Lemma \ref{lem-brunsherzog}, $\phi$ must be homogeneous, and so $\Pf(\phi)$ is homogeneous. Since $f$ is homogeneous and $d<r_i$ for $i=1,2,3$ by assumption, we must have $h_i=0$ for all $i$. Hence $\Pf(\phi)=vf$, as desired. 

Last, if one changes basis on the $P^3$ in the top row by the matrix $\widetilde{\iden}_3$,  
the map $d_3^*$ has matrix  $[c_1 \ c_2 \ c_3]$ and the map $\alpha_2^*$ has matrix 
\[
\begin{bmatrix}
\Pf(\phi)\iden_3 \\
-\phi^\vee\psi 
\end{bmatrix}^T
=
\begin{bmatrix}
vf\iden_3 & \psi^T\phi^\vee
\end{bmatrix}
\] 
since $(-\phi^\vee\psi)^T
=-\psi^T(\phi^\vee)^T
=\psi^T\phi^\vee$ as $\phi$ is skew-symmetric. 

This agrees with the resolution in the statement once one notes that the summands in $P^3\oplus P^m$ are now listed in the opposite order and one puts minus signs on the second row when forming the totalized complex. 
\end{proof}

We rewrite the resolution of $P/I$ from Proposition~\ref{I-Presn} as follows for use in the next part  
\begin{equation*}
  \xymatrixrowsep{.3pc} 
  \xymatrixcolsep{2.0pc}
\xymatrix{ 
    &&P^3\ar[dll]_{[{\bf c}]}&&P^m\ar[ll]_{\psi^T\phi^{\vee}}
    \ar[ddll]_{\qquad -[{\bf w}]}&&
    \\
    P&&\oplus&&\oplus&&P^m
    \ar[ull]_{\phi}\ar[dll]^{-\psi}
    \\
     &&P\ar[ull]^{[vf]}&&P^3\ar[ll]^{-[{\bf c}]}
     \ar[uull]^{ \qquad
     vf\iden_3}&&
}
\end{equation*}
and rename the summands as follows to distinguish between the two free modules $P^3$ and likewise for $P^m$.
\begin{equation}
\label{dgm-I-Presn}
  \xymatrixrowsep{.3pc} 
  \xymatrixcolsep{2.0pc}
\xymatrix{ 
    &&G_1'\ar[dll]_{[{\bf c}]}&&G_2'\ar[ll]_{\psi^T\phi^{\vee}}
    \ar[ddll]_{\qquad -[{\bf w}]}&&
    \\
    G_0&&\oplus&&\oplus&&G_3
    \ar[ull]_{\phi}\ar[dll]^{-\psi}
    \\
     &&G_1''\ar[ull]^{[vf]}&&G_2''\ar[ll]^{-[{\bf c}]}
     \ar[uull]^{\qquad vf\iden_3}&&
}
\end{equation}

\vspace{3mm}
\centerline{\bf Part 2: Resolutions over $R$}
\vspace{3mm}

In this section we follow the classic construction of Shamash \cite[\S 3]{JSh69} to obtain a minimal resolution of $P/I = R/\mathfrak{c}$ over $R=P/(f)$ from the resolution over $P$ given in Part 1. 
For more details, see also Remark 2.2.1 and Theorem 9.1.1 in Avramov's exposition  \cite{Av10}. 

We begin with the resolution $G$ of $P/I$ from Proposition~\ref{I-Presn} as rewritten in Diagram~\ref{dgm-I-Presn}. 

The first step is to find a square-zero null homotopy for multiplication by $f$ on $G$. 

\begin{lem}
\label{homotopies}
With the notation above, the following assignments define a null homotopy $\sigma$ for multiplication by $f$ on $G$.
\[
\sigma_0 =
\begin{bmatrix}
0 \\ 1
\end{bmatrix}
\qquad 
\sigma_1 =
\begin{bmatrix}
0 & 0 \\
\iden_3 & 0 
\end{bmatrix}
\qquad 
\sigma_2 =
\begin{bmatrix}
\phi^\vee \\ 0
\end{bmatrix}
\]
In other words, define 
$\sigma_0=1$ on $G_0 \to G_1''$
(the identity map), 
$\sigma_1=\iden_3$ on $G_1' \to G_2''$ 
(the identity map), 
$\sigma_2=\phi^\vee$ on $G_2' \to G_3$ 
and all remaining maps between summands equal to 0.

Furthermore, $\sigma^2=0$. 
\end{lem}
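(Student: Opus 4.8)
The assertion amounts to the four null-homotopy identities
\[
\partial_{i+1}^{G}\sigma_i+\sigma_{i-1}\partial_i^{G}=f\cdot\iden_{G_i}
\qquad(i=0,1,2,3),
\]
together with $\sigma_{i+1}\sigma_i=0$ for every $i$, where $G_0\xla{\partial_1^G}G_1'\oplus G_1''\xla{\partial_2^G}G_2'\oplus G_2''\xla{\partial_3^G}G_3$ is the minimal resolution of $P/I$ from Proposition~\ref{I-Presn} displayed in Diagram~\ref{dgm-I-Presn} and $\sigma_{-1}=\sigma_3=0$. My plan is to verify all of these directly by multiplying out the block matrices. The structural point that makes the computation short is that $\sigma$ is \emph{purely off-diagonal}: $\sigma_0$ identifies $G_0$ with the summand $G_1''\subseteq G_1$, $\sigma_1$ is the identity $G_1'\to G_2''$, $\sigma_2$ is $\phi^\vee\colon G_2'\to G_3$, and every other block of $\sigma$ is zero. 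Hence in each composite $\partial^{G}\sigma$ and $\sigma\partial^{G}$ the ``complete intersection'' blocks ${\bf c}$, ${\bf w}$, $\psi^T\phi^\vee$, $\psi^T$ appear exactly twice and with opposite signs, so they cancel, while the surviving diagonal blocks turn out to be copies of $vf$.

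Carried out degree by degree this reads: $\partial_1^{G}\sigma_0=vf$ in degree $0$; $\partial_2^{G}\sigma_1+\sigma_0\partial_1^{G}$ has diagonal blocks $vf\,\iden_3$, $vf$ and off-diagonal block $-{\bf c}+{\bf c}=0$ in degree $1$; $\partial_3^{G}\sigma_2+\sigma_1\partial_2^{G}$ has diagonal blocks $\phi\phi^\vee$, $vf\,\iden_3$ and off-diagonal block $-\psi^T\phi^\vee+\psi^T\phi^\vee=0$ in degree $2$; and $\sigma_2\partial_3^{G}=\phi^\vee\phi$ in degree $3$. At the two occurrences of $\phi\phi^\vee$ and $\phi^\vee\phi$ I will use the Pfaffian--adjoint identity $\phi\phi^\vee=\phi^\vee\phi=\Pf(\phi)\,\iden_m$ for the alternating matrix $\phi$ of Definition~\ref{def-Pfaffian-adjoint}; this is available because $m=\mu(J)-3$ is even, a graded Gorenstein ideal of codimension three having an odd number of minimal generators. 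With $\Pf(\phi)=vf$ from Proposition~\ref{I-Presn}, all four expressions equal $vf\cdot\iden$; since $v\in k^{\times}$ this is $f\cdot\iden$ after the harmless normalization $v=1$ (replacing $f$ by $vf$ changes neither $R=P/(f)$ nor $d$), equivalently after rescaling $\sigma$ by $v^{-1}$.

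For $\sigma^2=0$ the only potentially nonzero composites are $\sigma_1\sigma_0$ and $\sigma_2\sigma_1$, and both vanish for support reasons: the image of $\sigma_0$ lies in $G_1''$ whereas $\sigma_1$ is supported on the complementary summand $G_1'$, and the image of $\sigma_1$ lies in $G_2''$ whereas $\sigma_2$ is supported on $G_2'$.

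The one thing requiring care is bookkeeping: keeping the block decompositions $G_1=G_1'\oplus G_1''$, $G_2=G_2'\oplus G_2''$ and the signs produced by the mapping-cone-and-dualize step of Proposition~\ref{I-Presn} consistent, so that the cancellations $-{\bf c}+{\bf c}$ and $-\psi^T\phi^\vee+\psi^T\phi^\vee$ genuinely occur. Once the explicit differentials of $G$ are in hand the verification is mechanical, the only non-formal ingredient being the classical Pfaffian--adjoint identity.
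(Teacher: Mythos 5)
Your verification is correct and follows essentially the same route as the paper's (one-line) proof: a direct block-matrix check of $\sigma\partial+\partial\sigma$ on the resolution of Proposition~\ref{I-Presn}, with the classical identity $\phi\phi^\vee=\phi^\vee\phi=\Pf(\phi)\iden_m$ supplying the diagonal blocks and the off-diagonal cancellations being automatic from the shape of $\sigma$. Your explicit treatment of the unit $v$ (the homotopy as written is one for multiplication by $vf=\Pf(\phi)$, harmless after normalizing $v=1$ or rescaling $\sigma$) is in fact slightly more careful than the paper, which tacitly writes $\Pf(\phi)=f$ at this point.
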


\begin{proof}
Using the formulas for the differential $\partial$ of $G$ given in Proposition~\ref{I-Presn}, one can easily check that $\sigma\partial+\partial\sigma=f$ using the fact that $\phi\phi^\vee = \phi^\vee\phi = f\iden_3$ recalled in Definition~\ref{def-Pfaffian-adjoint}.  
\end{proof}

We are now ready to give the form of the $R$-free resolution of $R/\mathfrak{c}=P/I$. 

\begin{prp}
\label{shamash}
Assume that $r_i > d$ for $i=1,2,3$. The $R$-free resolution of $R/\fc=P/I$ is
\[
\cdots \lra 
R^m \xra{{\phi}} 
R^m \xra{{\phi^\vee}} 
R^m \xra{{\phi}} 
R^m \xra{
{\psi^T\phi^\vee}} 
R^3 \xra{[{\bf c}]} 
R \lra 0 
\]
where by $\phi$, $\phi^\vee$, $\psi^T$, and ${\bf c}$ above we mean the images in $R$ of these $P$-matrices. 

In addition, the homogeneous skew-symmetric matrix $\phi$ has Pfaffian 
equal to $vf$ for some unit $v\in k$, and the pair $(\phi,\phi^\vee)$ of matrices over $P$ is the matrix factorization of $vf$ over $P$ associated to the periodic portion. 
\end{prp}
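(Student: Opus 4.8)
The plan is to apply the Shamash construction to the $P$-free resolution $G$ of $P/I$ from Proposition~\ref{I-Presn}, using the null homotopy $\sigma$ for multiplication by $f$ constructed in Lemma~\ref{homotopies}, and then to check that the resulting $R$-complex becomes $2$-periodic (in fact eventually with period-two matrices $\phi,\phi^\vee$) once one passes beyond the first two differentials. Recall that Shamash's theorem (see \cite[\S 3]{JSh69}, or Avramov \cite[2.2.1, 9.1.1]{Av10}) produces, out of a $P$-free resolution $G$ of a module annihilated by $f$ together with a system of higher homotopies $\{\sigma_j\}$ for $f$, an $R$-free resolution whose underlying module in homological degree $i$ is $\bigoplus_{j\ge 0} R\otimes_P G_{i-2j}$ and whose differential is assembled from $\partial^G$ and the $\sigma_j$. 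Here, by Lemma~\ref{homotopies}, the single homotopy $\sigma$ already satisfies $\sigma^2=0$, so the full system of higher homotopies is just $\sigma_1=\sigma$ and $\sigma_j=0$ for $j\ge 2$; this is precisely the situation in which the Shamash/Eisenbud construction collapses and one gets the simplest possible Shamash complex over $R$.

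The key steps, in order: (1) Write out $G$ from Diagram~\ref{dgm-I-Presn} with its split decomposition $G_1=G_1'\oplus G_1''$, $G_2=G_2'\oplus G_2''$, $G_0$, $G_3$, and record the homotopy $\sigma$ in these coordinates as in Lemma~\ref{homotopies}. (2) Form the Shamash complex: in homological degree $i\ge 0$ its term is $\bigoplus_{j\ge 0} R\otimes G_{i-2j}$; for $i=0$ it is $R$, for $i=1$ it is $R^3$ (from $G_1'\oplus G_1''$, i.e. $R^3\oplus R$ — wait, one must be careful: $G_1 = G_1' \oplus G_1'' = P^3 \oplus P$, so degree-$1$ term is $R^4$ before cancellation), and from degree $2$ onward the terms $R\otimes G_2$, $R\otimes G_1\oplus R\otimes G_3$, $R\otimes G_2\oplus R\otimes G_0$, $\dots$ all accumulate. (3) Now use minimality: over $R=P/(f)$, the entries $vf$ and ${\bf x}^q$-type entries of the $P$-differential that carry a factor of $f$ become $0$, and the columns/rows involving $G_1''$, $G_2''$, $G_0$ reappearing via the homotopy become part of trivial summands that split off. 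Concretely, $\sigma_0\colon G_0\to G_1''$ is the identity, and $\sigma_1\colon G_1'\to G_2''$ is the identity, so in the Shamash differential these identity blocks let one cancel the copies of $G_1''$ against the repeated $G_0$, and $G_2''$ against the repeated $G_1'$, and so on up the tail. After performing these cancellations one is left with $\cdots\to R^m\xrightarrow{\phi}R^m\xrightarrow{\phi^\vee}R^m\xrightarrow{\phi}R^m\xrightarrow{\psi^T\phi^\vee}R^3\xrightarrow{[\mathbf c]}R\to 0$, where the periodic part has differentials alternating between $\phi$ and $\phi^\vee$ because the Shamash differential in the periodic range is exactly $\partial^G$ composed with $\sigma$, and $\sigma_2\colon G_2'\to G_3$ is $\phi^\vee$ while the $G_2'$-to-$G_3$ part of $\partial^G$ is $\phi$. (4) Minimality of the resulting complex: all surviving entries lie in $\mathfrak m_R$ since the only non-minimal (unit) entries in $G$ were the identity blocks used up in the cancellation and the scalar $vf$ which dies in $R$; hence the complex displayed is the minimal $R$-free resolution. (5) Finally, the last sentence: that $\Pf(\phi)=vf$ is already established in Proposition~\ref{I-Presn}, and that $(\phi,\phi^\vee)$ is the matrix factorization of $vf$ over $P$ associated to the periodic tail is the identity $\phi\phi^\vee=\phi^\vee\phi=\Pf(\phi)\iden_m=vf\,\iden_m$ recalled in Definition~\ref{def-Pfaffian-adjoint}, together with the general fact (Eisenbud) that the eventual $2$-periodic part of the minimal $R$-free resolution of any $R$-module over a hypersurface $R=P/(f)$ is governed by a matrix factorization of $f$, which the pair $(\phi,\phi^\vee)$ here realizes.

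I expect the main obstacle to be bookkeeping the cancellations in the Shamash complex cleanly enough to see that exactly the copies of $G_0, G_1'', G_2''$ (and their higher repetitions) split off and that what remains is genuinely $R^m\to R^m\to\cdots$ with alternating maps $\phi,\phi^\vee$ — i.e. verifying that no stray nonzero maps survive between the surviving summands and that the period really is two rather than something longer. This requires carefully tracking, through the Shamash differential formula, how $\partial^G$ and $\sigma$ interleave on $G_2=G_2'\oplus G_2''$ and $G_1=G_1'\oplus G_1''$; the block-triangular shape of $\sigma$ (it is supported on $G_0\to G_1''$, $G_1'\to G_2''$, $G_2'\to G_3$ only) is what makes this work, but spelling it out is the delicate part. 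A secondary, easier point is confirming the degrees: since $\deg\phi=d-1$ (as $\Pf(\phi)=vf$ has degree $d$ and $\phi$ is $m\times m$ alternating on generators of equal degree) and $\deg\phi^\vee=$ the complementary degree, the two differentials in the tail do have the graded degrees $1$ and $d-1$ claimed in Theorem~A once one also tracks the internal twists — but this is a routine degree count given Lemma~\ref{lem-brunsherzog} and Proposition~\ref{I-Presn}.
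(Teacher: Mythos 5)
Your proposal is correct and follows essentially the same route as the paper: both invoke the Shamash construction with the resolution $G$ from Proposition~\ref{I-Presn} and the homotopy $\sigma$ from Lemma~\ref{homotopies}, observe that $\sigma^2=0$ eliminates all higher homotopies, and then cancel the non-minimal identity blocks coming from $\sigma_0\colon G_0\to G_1''$ and $\sigma_1\colon G_1'\to G_2''$ to arrive at the $2$-periodic tail $\phi,\phi^\vee$ (the paper formalizes these cancellations via Proposition~\ref{prp-min}, whereas you carry them out more informally, but the content is the same). One small slip: the degree-$2$ term of the Shamash complex before cancellation is $R\otimes G_2\oplus R\otimes G_0$, not $R\otimes G_2$ alone, though your subsequent description of the cancellations makes clear you understood the picture.
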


\begin{proof}
We apply the construction of Shamash \cite{JSh69}. We take as input the resolution $G$ of $P/I$ from Proposition~\ref{I-Presn} and the homotopy $\sigma$ from Lemma~\ref{homotopies}. 
Let $D(R)$ be the divided power $R$-algebra on one variable, call it $T$. 
As $\sigma^2=0$, the construction requires no higher homotopies. 
It yields a resolution of the form 
\[
G \otimes_P D(R),
\]
with differential given by 
\begin{equation}
\label{differential}
\partial(g\otimes T^{(i)}) 
= \partial(g)\otimes T^{(i)} 
+ (-1)^{|g|}\sigma(g)\otimes T^{(i-1)}
{\textrm{ for }} g\in G, i\geq 0.
\end{equation}
One may rewrite this as follows. Set $\overline{G}=G\otimes_P R$ and note that, on the underlying modules, we have 
\[
G \otimes_P D(R) 
\cong \overline{G} \otimes_R D(R)
\cong \bigoplus_{i\geq 0} \left( \overline{G} \otimes_R RT^{(i)} \right),
\] 
each of whose summands is a copy of the complex $\overline{G}$, that is, 
\begin{equation*}
  \xymatrixrowsep{0.3pc} 
  \xymatrixcolsep{2.0pc}
\xymatrix{ 
    &&R^3\ar[dll]_{[{\bf c}]}&&R^m\ar[ll]_{\psi^T\phi^{\vee}}
    \ar[ddll]_{\qquad -[{\bf w}]}&&
    \\
    R&&\oplus&&\oplus&&R^m
    \ar[ull]_{\phi}\ar[dll]^{-\psi}
    \\
     &&R\ar[ull]^{0}&&R^3\ar[ll]^{-[{\bf c}]}
     \ar[uull]^{\qquad\qquad 0}&&
}
\end{equation*}
Equipping the direct sum of these with the Shamash differential (\ref{differential}), we get that the totalization of the following diagram, which consists of shifted copies of the diagram above, is a resolution over $R$ (note that the sign in the formula for the  differential changes the map $\iden_3$ in $\sigma_1$ to $-\iden_3$). 
\begin{equation*}
  \xymatrixrowsep{0.3pc} 
  \xymatrixcolsep{3.0pc}
\xymatrix{ 
    &R^3\ar[dl]_{[{\bf c}]}&R^{m}\ar[l]_{\psi^T\phi^{\vee}}\ar[ddl]_{\qquad -[{\bf w}]}&&&&
    \\
    R&\oplus&\oplus&R^{m}\ar[ul]_{\phi}\ar[dl]^{-\psi}& &&
    \\
    &R\ar[ul]^0&R^3\ar[l]^{-[{\bf c}]}\ar[uul]^{\qquad \qquad 0}&&&&
    \\
    &&&&&&
    \\
    &&R^3\ar[dl]\ar@/_/[uu]_{-I_3}&R^{m}\ar[l]\ar[ddl]
    \ar@/_/[uuu]_{\phi^\vee}&&&
    \\
    &R\ar@/_/[uuu]_1&\oplus&\oplus&R^{m}\ar[ul]_{\phi}
    \ar[dl]&& 
    \\
    & &R\ar[ul]&R^3\ar[l]\ar[uul]&&&
    \\
    &&&&&&
    \\
    &&&\ \ \ \vdots\ar@/_/[uu]_{-\iden_3}&\ \ \vdots\ar@/_/[uuu]_{\phi^\vee}&&
    \\
    &&\ \ \vdots\ar@/_/[uuu]_{1}&&&&
    \\
}
\end{equation*}

It is clear that one can reduce this to a minimal resolution by removing the parts identified under the maps $1$ and $-\iden_3$ in the picture. To do this, one rebuilds the complex as the limit of its subcomplexes $\bigoplus_{i\geq 0} \left( \overline{G} \otimes_R RT^{(i)} \right)$, each built as a mapping cone from the previous one and each of which can be reduced to a minimal complex using Proposition~\ref{prp-min}. It is easy to see all the relevant free modules drop out and the inherited differentials are what is listed there, that is, the induced differentials from cokernels to kernels in the statement of Proposition~\ref{prp-min} turn out to be zero here since their domain or codomain is zero.  The result is the desired complex. 
\end{proof}

\vspace{3mm}
\centerline{\bf Part 3: Conclusions for the $({\bf x}^q)$-compressed Case}
\vspace{3mm}

We now specialize to the case of 
\[
\mathfrak{c}=({\bf x}^q)=(x^q,y^q,z^q)
\] 
and the linked ideals 
\[
I_q=({\bf x}^q)+(f)=(x^q,y^q,z^q,f) 
\qquad {\textrm{and}} \qquad 
J_q=(({\bf x}^q):I_q).
\] 
In this part, we apply our technical results on the 
 generators of the link $J_q$ 
proved in Section~\ref{sec:socle} under the assumption that $J_q$ is $({\bf x}^q)$-compressed. 
In view of Remark~\ref{rmk-general}, this condition holds for all $q$ for very general choices of $f$ of degree $d<p$. 
We use these results to derive the  \emph{graded} Betti numbers 
 of $I_q$
and facts about the matrix factorizations of the Frobenius powers of the homogeneous maximal ideal of the hypersurface ring $R=Q/(f)$. 
In the next part we use these to determine various Frobenius invariants of $R$.

Note that, since $J_q$ is $({\bf x}^q)$-compressed and $q \geq d+3$, the elements $x^q,y^q,z^q$ are part of a minimal generating set for $J_q$. See Remark \ref{rem-qvsd} for details.
Fix a set of minimal generators for $J_q$ as follows 
\[
J_q=(x^q,y^q,z^q,w_1,\dots,w_m)
\qquad {\textrm{so}}
\qquad 
m=\mu(J_q)-3.
\]

The new information employed in this section is one of the main results from Section 4, namely that when $J_q$ is $({\bf x}^q)$-compressed one has $\mu(J_q)=2d+3$ and hence one gets $m=2d$; see Proposition~\ref{numberofgeneratorsofJ}. 

\begin{rmk}
\label{rem-qvsd}
The mild hypothesis $q \geq \deg f +3$ implies that $q \leq \frac{s}{2}$ which ensures that the generators $x^q, y^q, z^q$ lie in degree at most $\frac{s}{2}$; this is implicitly used in several results. In comparison to previous sections, this hypothesis is the $n=3$ case of $q \ge (d+n)/(n-2)$.
\end{rmk}

Recall that we say $f$ is \linkcomp{$q$} if $J_q$ is $({\bf x}^q)$-compressed. 
In view of Remark~\ref{rmk-general}, the result below holds for general choices of $f$ of degree $d<p$, and it holds for all $q$ for very general choices of $f$ of degree $d<p$.

\begin{thm}
\label{gradedBetti}
Let $R=k[x,y,z]/(f)$ be a standard graded hypersurface ring over a field $k$ of characteristic $p>0$ with homogeneous maximal ideal $\fm$. 
Suppose that $p$ and $d=\deg f$ have opposite parity. 
Let $q\geq d+3$ be a power of $p$. 

Assume further that $f$ is \linkcomp{$q$}. 
Then the following hold. 
\begin{itemize}
\item[(a)] 
The matrix $\phi$ in the matrix factorization $(\phi,\phi^\vee)$from Proposition~\ref{shamash} is a $2d\times 2d$ linear matrix with Pfaffian equal to $vf$ for some unit $v\in k$, and its Pfaffian adjoint $\phi^\vee$ is a $2d\times 2d$ matrix with entries of degree $d-1$. 
The matrix $\psi$ is $2d\times 3$ with entries of degree $\frac{1}{2}(q-d+1)$. 
In particular, the minimal graded resolution of $R/\fm^{[q]}$ over $R$ has the following eventually 2-periodic form: 
\[
\cdots \xra{{\phi}} 
R^{2d}(-b-d) \xra{{\phi^\vee}} 
R^{2d}(-b-1) \xra{{\phi}} 
R^{2d}(-b) \xra{\psi^T\phi^\vee} 
R^3(-q) \xra{[{\bf x}^q]} 
R \lra 0 
\]
where $b=\frac{3}{2}q+\frac{1}{2}d-\frac{1}{2}$. 
\vspace{2mm}

\item[(b)] 
The minimal graded resolution of $P/I_q=R/\fm^{[q]}$ over $P$ from Proposition~\ref{I-Presn} has the following form: 
\[
0 \lla P 
\xla{\ \  }  
\begin{matrix}
P^3(-q) \\\oplus\\ P(-d)
\end{matrix}
\xla{\ \ \ } 
\begin{matrix}
P^{2d}(-\frac{3}{2}q-\frac{1}{2}d+\frac{1}{2}) \\\oplus\\ P^3(-q-d)
\end{matrix}
\xla{\ \ \ } 
P^{2d}({\textstyle{-\frac{3}{2}q-\frac{1}{2}d-\frac{1}{2}}}) \lla 0
\]

In particular, the Castelnuovo-Mumford regularity is given by
\begin{equation*}
\reg(R/\fm^{[q]})=\frac{3}{2}q+\frac{1}{2}d-\frac{5}{2}.   
\end{equation*}

\end{itemize} 
\end{thm}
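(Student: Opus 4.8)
The plan is to assemble the statement from results already established — the generator count of the linked ideal (Proposition~\ref{numberofgeneratorsofJ}), the graded Buchsbaum--Eisenbud resolution (Lemma~\ref{lem-brunsherzog}), and the linkage and Shamash resolutions (Propositions~\ref{I-Presn} and~\ref{shamash}) — together with careful bookkeeping of graded twists; the only point that genuinely needs attention is that the opposite-parity hypothesis is exactly what forces the relevant degrees to be integers. To begin: since $q=p^e$ has the same parity as $p$ and $p,d$ have opposite parity, $s=3(q-1)-d$ is even, so the ``$s$ even'' conclusions of Proposition~\ref{numberofgeneratorsofJ} apply. By Remark~\ref{rem-qvsd} the hypothesis $q\ge d+3$ gives $q\le s/2$, hence $x^q,y^q,z^q$ are part of a minimal generating set of $J=J_q$; and by Proposition~\ref{numberofgeneratorsofJ} there are exactly $2d$ further minimal generators $w_1,\dots,w_{2d}$, all in degree $\tfrac{s}{2}+1=\tfrac{3q-d-1}{2}$. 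Thus $m=\mu(J)-3=2d$, which is where every ``$2d$'' in the statement comes from.

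Next I would feed $m=2d$ into Lemma~\ref{lem-brunsherzog}: the minimal graded resolution of $P/J$ over $P$ then has $F_1=P^3(-q)\oplus P^{2d}(-(\tfrac{s}{2}+1))$, and $F_3=P(-(s+3))$ because $P/J$ is Artinian Gorenstein with socle degree $\deg\varphi=s$ (Lemma~\ref{phi}). Graded self-duality of this resolution (the content of $\partial_3=\partial_1^T$ together with $\partial_2$ alternating) then forces $F_2\cong\Hom_P(F_1,P)(-(s+3))=P^3(-(2q-d))\oplus P^{2d}(-(\tfrac{s}{2}+2))$, where, in the bases of Lemma~\ref{lem-brunsherzog}, the $P^3$ summands of $F_2$ and $F_1$ are exactly those joined by the $3\times3$ block $*$ of $\partial_2=X=\left[\begin{smallmatrix} * & -\psi^T\\ \psi & \phi\end{smallmatrix}\right]$. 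Reading off entry degrees gives: $\phi$ has entries of degree $(\tfrac{s}{2}+2)-(\tfrac{s}{2}+1)=1$, so $\phi$ is a $2d\times2d$ linear alternating matrix; $\psi$ (hence $\psi^T$) has entries of degree $\tfrac{q-d+1}{2}$, a positive integer precisely because $q\not\equiv d\pmod 2$; and $*$ has entries of degree $q-d$. It follows that $\phi^\vee$, whose entries are $\pm$ the size-$(2d-2)$ sub-Pfaffians of the linear matrix $\phi$, has entries of degree $d-1$. Finally, since $q>d$, Proposition~\ref{I-Presn} applies with $\mathfrak{c}=({\bf x}^q)$ and gives $\Pf(\phi)=vf$ for a unit $v\in k$; this also re-derives $\deg\phi=1$ via $\deg\Pf(\phi)=d\cdot(\deg\phi)=d$.

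It then remains to propagate these degrees through the two resolutions. For (b), substituting $m=2d$ and the entry degrees above into the resolution of Proposition~\ref{I-Presn}: from $P^3(-q)\oplus P(-d)\to P$ and $\deg w_i=\tfrac{3q-d-1}{2}$ one reads the middle term $P^{2d}(-\tfrac{3q+d-1}{2})\oplus P^3(-(q+d))$, and then $\deg\phi=1$ forces the last term $P^{2d}(-\tfrac{3q+d+1}{2})$; the internal consistency of the two entries in each column (e.g.\ of the $-{\bf w}$ and $\psi^T\phi^\vee$ blocks, or of the $-\psi^T$ and $\phi$ blocks) is automatic for a complex of graded modules, and after writing $\tfrac{3q+d-1}{2}=\tfrac32 q+\tfrac12 d-\tfrac12$ and $\tfrac{3q+d+1}{2}=\tfrac32 q+\tfrac12 d+\tfrac12$ this is exactly the displayed resolution. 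For (a), since $q>d$ Proposition~\ref{shamash} yields the eventually $2$-periodic minimal $R$-resolution with successive differentials $[{\bf x}^q]$, $\psi^T\phi^\vee$, $\phi$, $\phi^\vee$, $\phi$, $\dots$; starting from $R^3(-q)\xrightarrow{[{\bf x}^q]}R$ and using $\deg(\psi^T\phi^\vee)=\tfrac{q-d+1}{2}+(d-1)=\tfrac{q+d-1}{2}$, then $\deg\phi=1$ and $\deg\phi^\vee=d-1$, the twists come out as $R^3(-q)\leftarrow R^{2d}(-b)\xleftarrow{\phi}R^{2d}(-b-1)\xleftarrow{\phi^\vee}R^{2d}(-b-d)\xleftarrow{\phi}\cdots$ with $b=q+\tfrac{q+d-1}{2}=\tfrac32 q+\tfrac12 d-\tfrac12$, as claimed; the assertions about the sizes and the entry degrees of $\phi$, $\phi^\vee$, and $\psi$ are those established above. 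I do not anticipate any real obstacle: everything of substance is already in Proposition~\ref{numberofgeneratorsofJ} and the structural Propositions~\ref{I-Presn} and~\ref{shamash}, and the only care needed is the parity check and keeping the twist bookkeeping straight.
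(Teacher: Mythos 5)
Your proposal is correct and follows essentially the same route as the paper: use Proposition~\ref{numberofgeneratorsofJ} (with $s$ even from the parity hypothesis) to get $m=2d$ and $\deg w_i=\tfrac{s}{2}+1$, determine the graded twists in the Buchsbaum--Eisenbud resolution of $P/J$ from the backtwist $-(s+3)=-(3q-d)$ and its self-duality, read off that $\phi$ is linear, $\phi^\vee$ has degree $d-1$, and $\deg\psi=\tfrac{q-d+1}{2}$, and then propagate the shifts through Propositions~\ref{I-Presn} and~\ref{shamash}. The only cosmetic differences are that you derive the backtwist from the socle degree via Lemma~\ref{phi} where the paper cites \cite[Cor 1.7]{KV-07}, and you phrase the determination of $F_2$ via graded self-duality rather than via $\partial_3=\partial_1^T$ directly; the bookkeeping agrees with the paper's.
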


\begin{proof}
We prove (a) in the process of proving (b). 

First note that the hypothesis that $p$ and $d=\deg f$ have opposite parity is equivalent to the socle degree $s=3(q-1)-d$ of $P/J_q$ being even.  

We begin by determining the graded shifts in the resolution of $P/J_q$ from Lemma~\ref{lem-brunsherzog}. 
\[
0 \to P \xrightarrow{\partial_3} P^3\oplus P^m \xrightarrow{\partial_2} P^3\oplus P^m \xrightarrow{\partial_1} P \to 0
\]
where 
\[
\partial_1=[x^q,y^q,z^q,w_1,w_2,\cdots,w_m], 
\qquad 
\partial_2=X=
\begin{bmatrix} 
* & -\psi^T \\ 
\psi & \phi 
\end{bmatrix},
{\textrm{ \ \ and \ \ }}
\partial_3=\partial_1^T. 
\]
As is well-known, the last shift, sometimes called the back twist, is equal to 
\[
-s-3 = -3q+d
\] 
see, for example, \cite[Cor 1.7]{KV-07}.  
But, by the hypotheses,  Proposition~\ref{numberofgeneratorsofJ} yields that $m=2d$ and 
\[
\deg w_i 
=\frac{s}{2}+1 
= \frac{3}{2}q-\frac{1}{2}d-\frac{1}{2}
\]
for each $i=1,\dots,m$.  Therefore the minimal graded resolution has the form 
\[
0 \to P(-3q+d) \xrightarrow{\partial_3} P^3(a)\oplus P^m(b) \xrightarrow{\partial_2} P^3(c)\oplus P^m(e) \xrightarrow{\partial_1} P \to 0
\]
where 
\begin{align*}
a&=-3q+d+(q)=-2q+d,\\ 
b&=-3q+d+(\deg w_i)
= -\frac{3}{2}q+\frac{1}{2}d-\frac{1}{2},\\
c&=-q,   \\
\text{and } e&=-\deg w_i
=-\frac{3}{2}q+\frac{1}{2}d+\frac{1}{2}.
\end{align*}
From this one deduces that $\phi$ is linear (since $e-b=1$). This can also be seen from the fact that $m=2d$ and $\Pf(\phi)=vf$. So its Pfaffian adjoint $\phi^\vee$ is a homogeneous matrix with entries of degree $d-1$. 
In addition, $\psi$ is a homogeneous matrix with \[
\deg \psi 
= e-a
=\frac{1}{2}q-\frac{1}{2}d+\frac{1}{2}.
\]
From this one, in combination with Proposition~\ref{I-Presn}, one gets the desired result. (Equivalently, one can redo the mapping cone argument in the proof there and track the resulting shifts.)
\end{proof}

\vspace{3mm}

\centerline{\bf Part 4: Hilbert-Kunz function} 
\vspace{3mm}
Given a finitely generated graded $k$-algebra $R$ with respect to the homogeneous maximal ideal $\fm$, the Hilbert-Kunz function of $R$ is defined as
\begin{equation*}
    HK_R(q)=\dim_k R/\fm^{[q]}.
\end{equation*}
In \cite{Mo83}, Monsky shows that asymptotically these functions have the form of \begin{equation*}
    HK_R(q)=e_{\textrm{HK}}(R)q^r+\mathcal{O}(q^{r-1}),
\end{equation*}
where $r$ is the Krull dimension of $R$. The constant $e_{\textrm{HK}}(R)$ is called the Hilbert-Kunz multiplicity of $R$.

We compute the Hilbert-Kunz function of $R$ for $f$ \qlinkcomp. In view of Remark~\ref{rmk-general}, the next theorem holds for general choices of $f$ of degree $d<p$, and  holds for all $q$ for very general choices of $f$ of degree $d<p$.

\begin{thm}
\label{HKfunction}
Suppose that $p$ and $d$ are of opposite parity. If $f$ is \qlinkcomp\ for some $q$
then  
\begin{equation*}
HK(q)=\frac{3}{4}dq^2-\frac{1}{12}(d^3-d).   
\end{equation*}
\end{thm}

\begin{proof}
Taking the alternating sum of the Poincar\'e series of the free modules in the resolution of $P/I_q$ from Theorem~\ref{gradedBetti}, one sees that 
\begin{align*}
    \ell(P/I_q)
    &=\!
    \left[
    \frac{1}{(1-t)^3}
    \right]
    -\left[
    \frac{3t^q}{(1-t)^3}
    +\frac{t^d}{(1-t)^3}
    \right]
    +\left[
    \frac{2dt^b}{(1-t)^3}
    +\frac{3t^a}{(1-t)^3}
    \right]
    -
    \left[
    \frac{2dt^{b+1}}{(1-t)^3} 
    \right]
    \\[2mm]
    &=\frac{1-3t^q-t^d+2dt^b+3t^a-2dt^{b+1}}{(1-t)^3}
\end{align*}
where $b=\frac{3}{2}q+\frac{1}{2}d-\frac{1}{2}$ and $a=q+d$. 
To compute the total length of $P/I_q$ (which is the sum of the lengths of the graded pieces) we evaluate this at $t=1$, which we achieve by taking the limit as $t\to 1$ via l'H\^opital's rule. 
This yields 
\begin{align*}
HK(q)=\ell(P/I)=
-\frac{1}{6}
[&-3(q)(q-1)(q-2)
-d(d-1)(d-2) \\
&+2d(b)(b-1)(b-2)
+3(a)(a-1)(a-2) \\
&-2d(b+1)(b)(b-1)]
\\[2mm]
=-\frac{1}{6}
[&-3q(q-1)(q-2)-d(d-1)(d-2) \\
&-6d(b)(b-1)+3a(a-1)(a-2)].
\end{align*}
Plugging in the values of $b$ and $a$ and simplifying yields the desired result. 
\end{proof}

In \cite{BuCh-97}, it is shown that for  general  choices of $f$, the Hilbert-Kunz multiplicity of $R$ achieves the minimum possible value $\frac{3}{4}d$. A similar result follows from  Theorem~\ref{HKfunction} if $(({\bf x}^q) \colon f)$ is $({\bf x}^q)$-compressed for all large enough $q$; 
see Remark~\ref{rmk-general}.

  \section{Examples}
  \label{sec:examples}

In this section we give some examples to illustrate the various types of stability phenomena that occur with respect to Frobenius powers. 
In each we contrast the behavior for a \qlinkcomp\ polynomial with that for a diagonal hypersurface $x^d+y^d+z^d$, as studied in \cite{KRV-12}. 
Furthermore, the non-diagonal polynomial in Example~\ref{diagonalexample} is general enough that it further displays the behavior described in Theorem A in the introduction. 
In fact, it is easy to find such examples since most randomly chosen ones turn out to be generic enough; see Remark~\ref{rmk-general}. 

We set  
\[
P=k[x,y,z]
\ \ {\text{with}} \ \ 
k=\mathbb Z/p\mathbb Z
\ \  {\text{and}} \ \  
R=P/(f).
\]
For each example, we first use  \texttt{Macaulay2}  to compute the degrees of the generators of the linked ideal 
\[
J_q=((x^q,y^q,z^q):(f))
\] 
in order to check whether it is $(x^q,y^q,z^q)$-compressed.  
Indeed, using Lemma~\ref{TFAErelativelycompressed}, we know that the Gorenstein ideal $J_q$ is $(x^{q},y^{q},z^{q})$-compressed as long as its only graded generators in degrees $i \leq \frac{s}{2}$ are $x^{q},y^{q},z^{q}$ (unless $q>\frac{s}{2}$, in which case as long as there are none). 

Then we display the effect on the resolutions by providing some of the graded Betti tables for the $R$-modules $R/\fm^{[q]}$, again computed using \texttt{Macaulay2}. 

\begin{exa}
\label{ex1}
Let $f=xy^2+yz^2+zx^2$. 
First let $p=5$ and so $k=\mathbb{Z}/5\mathbb{Z}$.  Here $p$ and $d$ have the same parity, demonstrating that we can find examples of the cycling behavior beyond the reach of our main results.
Then there are (nongraded) isomorphisms 
\begin{align*}
\syz_3^R(R/(x^5,y^5,z^5))
&\cong \syz_3^R(R/(x^{25},y^{25},z^{25}))
\\
&\cong \syz_3^R(R/(x^{125},y^{125},z^{125}))
\\
&\cong \syz_3^R(R/(x^{625},y^{625},z^{625}))
\\
&\cong \coker \begin{pmatrix} x & z^2 & 0 & 0 \\ -y & y^2+xz & 0 & 0 \\ 2z & 0 & y^2+xz & z^2 \\ 0 & z & -2y & 2x \\ \end{pmatrix}
\end{align*}
and so perhaps the tails of the resolutions are in fact the same for all powers of $p$, up to a graded shift. 

From \texttt{Macaulay2}, we compute the generators of $J_q$ to be as follows. For $e=2$ there are 3 generators of degree 25, namely $x^{25},y^{25},z^{25}$, 3 generators of degree 35, and 1 generator of degree 36. For $e=3$, there are 3 generators of degree 125, namely $x^{125}$, $y^{125}$, $z^{125}$, 3 generators of degree 185, and 1 generator of degree 186. 
So $J_{q}$ is $(x^{q},y^{q},z^{q})$-compressed for at least $q=25$ and $q=125$ and hence exhibit the expected stable Betti number behavior. 

The Betti tables are below. Note that in the Betti tables below the rows that are skipped are zero and the rows that do appear have their row number indicated at the left. 
Note also that since these resolutions are over a hypersurface and hence eventually periodic, we only display columns up to homological degree 4, after which the pattern repeats.

\begin{tabular}{c | c | c | c | c | c}
\multicolumn{6}{c}{$e=1$} \\
\hline
 & 0 & 1 & 2 & 3 & 4 \\
 \hline
 total: & 1 &3  & 4 & 4 & 4 \\
\hline
0: & 1 & . & . & . & . \\
4: & . & 3 & . & . & . \\
6: & . & . & 3 & 1 & . \\
7: & . & . & 1 & 3 & 3 \\
8: & . & . & . & . & 1 \\
\end{tabular}
\hspace{20mm}
\begin{tabular}{c | c | c | c | c | c}
\multicolumn{6}{c}{$e=2$} \\
\hline
 & 0 & 1 & 2 & 3 & 4 \\
 \hline
 total: & 1 &3  & 4 & 4 & 4 \\
\hline
0: & 1 & . & . & . & . \\
24: & . & 3 & . & . & . \\
36: & . & . & 3 & 1 & . \\
37: & . & . & 1 & 3 & 3 \\
38: & . & . & . & . & 1 \\
\end{tabular}

\vspace{1mm}

\vspace{3mm}

However, since $p=5$ and $d=3$ have the same parity, we do not get the behavior described in Theorem A. 

In contrast, take the diagonal hypersurface $f=x^5+y^5+z^5$,  
and let $p=7$, so $k=\mathbb{Z}/7\mathbb{Z}$. 
There are (nongraded) isomorphisms 
\begin{align*}
\syz_3^R(R/(x^7,y^7,z^7))
&\cong \syz_3^R(R/(x^{7^3},y^{7^3},z^{7^3}))
\\
&\cong \coker 
\begin{pmatrix} 
y^3 & x^3 & z^3 & 0 \\
x^2 & -y^2 & 0 & -z^3 \\
z^2 & 0 & -y^2 & x^3 \\
0 & z^2 & -x^2 & -y^3 \\
\end{pmatrix}
\\
\syz_3^R(R/(x^{7^2},y^{7^2},z^{7^2}))
&\cong \syz_3^R(R/(x^{7^4},y^{7^4},z^{7^4}))
\\
&\cong \coker 
\begin{pmatrix} 
z & 0 & x^4 & y^4 \\
x & y^4 & -z^4 & 0 \\
-y & x^4 & 0 & z^4 \\
0 & z & y & -x \\
\end{pmatrix}
\end{align*}
In this case the syzygy cycle appears to have length 2.
The Betti tables are below.

\vspace{1mm}

\begin{tabular}{c | c | c | c | c | c}
\multicolumn{5}{c}{$e=1$} \\
\hline
 & 0 & 1 & 2 & 3 & 4\\
 \hline
 total: & 1 & 3  & 4 & 4 & 4\\
\hline
0: & 1 & . & . & . & .\\
6: & . & 3 & . & . & .\\
9: & . & . & 1 & . & .\\
10: & . & . & 3 & . & .\\
11: & . & . & . & 3 & .\\
12: & . & . & . & 1 & 1\\
13: & . & . & . & . & 3\\
\end{tabular}
\hspace{20mm}
\begin{tabular}{c | c | c | c | c | c}
\multicolumn{5}{c}{$e=2$} \\
\hline
 & 0 & 1 & 2 & 3  & 4\\
 \hline
 total: & 1 &3  & 4 & 4 & 4 \\
\hline
0: & 1 & . & . & . & . \\
48: & . & 3 & . & . & . \\
72: & . & . & 3 & 1 & . \\
75: & . & . & 1 & 3 & 3 \\
78: & . & . & . & . & 1\\
\end{tabular}
\end{exa}

\begin{exa}
\label{diagonalexample}
This example shows how the $(x^q,y^q,z^q)$-compressed case and the diagonal hypersurface cases can differ even for the same values of $p$ and $d$ (here, $p=d=4$). 
First we show that in the diagonal case the link $J_q$ is not $(x^q,y^q,z^q)$-compressed by examining the degrees of its generators and then we provide the resulting Betti tables for the resolutions of $R/\fm^{[q]}$ over $R$. 

Recall that, in the $(x^q,y^q,z^q)$-compressed case, the ideal $J_q$ is generated in degrees $\lceil s/2 \rceil$ and $\lceil s/2 \rceil +1$.
In contrast, the data in the diagonal case is as follows (all computations performed in Macaulay2 \cite{M2}). 

For $p=5$ and $p=7$, in the case of the diagonal hypersurface $x^4+y^4+z^4$, the ideal $J_q$ is not $(x^q,y^q,z^q)$-compressed, which we can see as follows from the degrees of generators of $J_q$. 
For $p=5$, the ideal $((x^5,y^5,z^5):(x^4+y^4+z^4))$ has 1 generator in degree 3 and 6 generators in degree 6. In the $(x^5,y^5,z^5)$-compressed case, it would have generators only in degree 5, which is equal to both $p^e$ and $\lceil \frac{s}{2} \rceil +1$. The ideal $((x^{25},y^{25},z^{25}):(x^4+y^4+z^4))$ has generators in degrees 25, 31, and 36, rather than in degrees $25=p^e$ and $35=\lceil \frac{s}{2} \rceil +1$ as in the $(x^{25},y^{25},z^{25})$-compressed case.
The case of $p=7$ is similar: the ideal $((x^7,y^7,z^7):(x^4+y^4+z^4))$ has six generators of degree 7 and one of degree 9. In the $(x^7,y^7,z^7)$-compressed case, there are generators of degrees 7 ($=p^e$) and 8 ($=\lceil \frac{s}{2} \rceil +1$). The ideal $((x^{49},y^{49},z^{49}):(x^4+y^4+z^4))$ has generators of degrees 49, 70, and 72, rather than $p^e=49$ and $\lceil \frac{s}{2} \rceil+1=71$ as in the $(x^{49},y^{49},z^{49})$-compressed case.

We can see the impact of these differences in the Betti tables for the resolutions of $R/\fm^{[q]}$ over $R$, which we display side-by-side below. The tables below are for the polynomial listed, which appears to be general enough to give a $(x^q,y^q,z^q)$-compressed link $J_q$, and the polynomial $x^4+y^4+z^4$ that behaves differently.
(As in Example~\ref{ex1}, the rows that are skipped are zero, row numbers are indicated at the left, and after column 4 the pattern repeats.) 

First take $p=7$. 
For $e=1$ we have

\vspace{3mm}

\begin{tabular}{c | c | c | c | c | c}
\multicolumn{6}{c}{$f=xy^3+yz^3+zx^3$} \\
\hline
 & 0 & 1 & 2 & 3 & 4 \\
 \hline
 total: & 1 &3  & 8 & 8 & 8 \\
\hline
0: & 1 & . & . & . & . \\
6: & . & 3 & . & . & . \\
10: & . & . & 8 & 8 & . \\
12: & . & . & . & . & 8 \\
\end{tabular}
\hspace{20mm}
\begin{tabular}{c | c | c | c | c | c}
\multicolumn{6}{c}{$f=x^4+y^4+z^4$} \\
\hline
 & 0 & 1 & 2 & 3 & 4 \\
 \hline
 total: & 1 &3  & 4 & 4 & 4 \\
\hline
0: & 1 & . & . & . & . \\
6: & . & 3 & . & . & . \\
9: & . & . & 3 & 1 & . \\
11: & . & . & 1 & 3 & 3 \\
13: & . & . & . & . & 1 \\
\end{tabular}

\vspace{3mm}

and for $e=2$ we have

\vspace{3mm}

\begin{tabular}{c | c | c | c | c | c}
\multicolumn{6}{c}{$f=xy^3+yz^3+zx^3$} \\
\hline
 & 0 & 1 & 2 & 3 & 4 \\
 \hline
 total: & 1 &3  & 8 & 8 & 8 \\
\hline
0: & 1 & . & . & . & . \\
48: & . & 3 & . & . & . \\
73: & . & . & 8 & 8 & . \\
75: & . & . & . & . & 8 \\
\end{tabular}
\hspace{20mm}
\begin{tabular}{c | c | c | c | c | c}
\multicolumn{6}{c}{$f=x^4+y^4+z^4$} \\
\hline
 & 0 & 1 & 2 & 3 & 4 \\
 \hline
 total: & 1 &3  & 4 & 4 & 4 \\
\hline
0: & 1 & . & . & . & . \\
48: & . & 3 & . & . & . \\
72: & . & . & 3 & 1 & . \\
74: & . & . & 1 & 3 & 3 \\
76: & . & . & . & . & 1 \\
\end{tabular}

\vspace{3mm}

Next, we give an example for which the contrast is more extreme for $e=2$. 
Take $p=5$. Indeed, as established in \cite{KRV-12}, for the polynomial $x^4+y^4+z^4$, the module  $R/\fm^{[q]}$ has finite projective dimension over $R$. Compare this to the hypersurface given by the polynomial $f=x^3y-xy^3+x^3z-xz^3-yz^3$, for which the ideal $((x^{25},y^{25},z^{25}):(f))$ has three generators of degree 25, and 8 of degree 35, so it is $(x^{25},y^{25},z^{25})$-compressed. 

For $e=1$ we have

\vspace{3mm}

\begin{tabular}{c | c | c | c | c | c}
\multicolumn{6}{c}{$f=x^3y-xy^3+x^3z-xz^3-yz^3$} \\
\hline
 & 0 & 1 & 2 & 3 & 4 \\
 \hline
 total: & 1 &3  & 4 & 4 & 4 \\
\hline
0: & 1 & . & . & . & . \\
4: & . & 3 & . & . & . \\
7: & . & . & 8 & 8 & . \\
9: & . & . & . & . & 8 \\
\end{tabular}
\hspace{20mm}
\begin{tabular}{c | c | c | c | c | c}
\multicolumn{6}{c}{$f=x^4+y^4+z^4$} \\
\hline
 & 0 & 1 & 2 & 3 & 4 \\
 \hline
 total: & 1 &3  & 4 & 4 & 4 \\
\hline
0: & 1 & . & . & . & . \\
4: & . & 3 & . & . & . \\
5: & . & . & 1 & . & . \\
7: & . & . & 3 & 3 & 1 \\
9: & . & . & . & 1 & 3 \\
\end{tabular}

\vspace{3mm}

and for $e=2$ we have

\vspace{3mm}

\begin{tabular}{c | c | c | c | c | c}
\multicolumn{6}{c}{$f=x^3y-xy^3+x^3z-xz^3-yz^3$} \\
\hline
 & 0 & 1 & 2 & 3 & 4 \\
 \hline
 total: & 1 &3  & 4 & 4 & 4 \\
\hline
0: & 1 & . & . & . & . \\
24: & . & 3 & . & . & . \\
37: & . & . & 8 & 8 & . \\
39: & . & . & . & . & 8 \\
\end{tabular}
\hspace{20mm}
\begin{tabular}{c | c | c | c| c | c}
\multicolumn{6}{c}{$f=x^4+y^4+z^4$} \\
\hline
 & 0 & 1 & 2 & 3 & 4\\
 \hline
 total: & 1 &3  & 2 &0 & 0\\
\hline
0: & 1 & . & . & . & .\\
24: & . & 3 & . & . & .\\
33: & . & . & 1 & . & .\\
38: & . & . & 1  &. & .\\
\multicolumn{5}{c}{ }
\end{tabular}

\vspace{3mm}

\noindent 
For $e=2$, for the diagonal hypersurface,  $R/\fm^{[q]}$ even has finite projective dimension over $R$.
In contrast, the other polynomial is general enough that the resolution exhibits the behavior described in Theorem A of the introduction, that is, one of the matrices in the $8 \times 8$ matrix factorization is actually linear (and is symmetric with Pfaffian equal to a unit times $f$); furthermore, the high Betti numbers are the maximal possible, namely $2d$. 
\end{exa}

\section*{Acknowledgements}
We would like to thank Luchezar Avramov for bringing \cite{Av81} to our attention, which was crucial in leading us to prove our results in Section 5.

\bibliographystyle{amsalpha}
\bibliography{citations}


\end{document}